\documentclass[11pt]{article}
\usepackage[english]{babel}
\usepackage{amsmath,amsthm,amssymb}
\usepackage[all]{xy}
\usepackage{setspace}
\usepackage[top=1.3in, bottom=1.6in, left=1.3in, right=1.3in]{geometry}
\usepackage{color}
\definecolor{grau}{rgb}{0.5,0.5,0.5}
\usepackage[colorlinks, linkcolor=grau, citecolor=grau, urlcolor=grau]{hyperref}
\usepackage{breakurl}
\usepackage{bibspacing}
\usepackage{leftidx}
\frenchspacing
\sloppy

\newtheorem{theorem}{Theorem}[section]
\newtheorem{lemma}[theorem]{Lemma}
\newtheorem{proposition}[theorem]{Proposition}

\newtheorem{corollary}[theorem]{Corollary}

\numberwithin{equation}{section}

\def\ces{(ES)}

\def\sp{\textnormal{Spec}}

\def\P1k{\mathbb P_{k}^{1}}
\def\deg{\textnormal{deg}}

\def\sp{\textnormal{Spec}}
\def\N{N_{K/\mathbb Q}}

\def\gl2{\textnormal{GL}_2}
\def\im{\textnormal{im}}
\def\nd{\textnormal{End}}
\def\ces{(ES)}

\newcommand {\OK}  {{\mathcal O_{K}}}
\newcommand {\OS}  {{\mathcal O_{S}}}

\newcommand {\QQ}  {{\mathbb Q}}
\newcommand {\ZZ}  {{\mathbb Z}}

\newcommand {\CC}  {{\mathbb C}}


\newcommand {\el} {M}

\newcommand {\di} {n_i}
\onehalfspacing

\date{}

\begin{document}
\author{Rafael von K\"anel\footnote{IH\'ES, 35 Route de Chartres, 91440 Bures-sur-Yvette, France, \ \
E-mail adress: {\sf rvk@ihes.fr}}}
\title{Modularity and integral points on moduli schemes}
\maketitle
\begin{abstract}
The purpose of this paper is to give some new Diophantine applications of modularity results. We use the Shimura-Taniyama conjecture to prove effective finiteness results for integral points on  moduli schemes of elliptic curves. For several fundamental Diophantine problems (e.g. $S$-unit and Mordell equations), this gives an effective method which does not rely on Diophantine approximation or transcendence techniques. 
We also combine Faltings' method with Serre's modularity conjecture, isogeny estimates and results from Arakelov theory, to establish the effective Shafarevich conjecture for abelian varieties of (product) $\textnormal{GL}_2$-type. In particular, we open the way for the effective study of integral points on certain higher dimensional moduli schemes.
\end{abstract}

\section{Introduction}

Starting with the key breakthroughs by Wiles \cite{wiles:modular} and by Taylor-Wiles \cite{taywil:modular}, many authors solved important Diophantine problems on using or proving modularity results. 

The purpose of this paper is to give some new Diophantine applications of modularity results. We use the Shimura-Taniyama conjecture  to prove effective finiteness results for integral points on  moduli schemes of elliptic curves. For several fundamental Diophantine problems, such as for example $S$-unit and Mordell equations, this gives an effective method which does not rely on Diophantine approximation or transcendence techniques. We also combine Faltings' method with Serre's modularity conjecture, isogeny estimates and results from Arakelov theory, to establish the effective Shafarevich conjecture for abelian varieties of (product) $\gl2$-type. In particular, we open the way for the effective study  of Diophantine equations  related to integral points on certain higher dimensional moduli schemes such as, for example, Hilbert modular varieties.
In what follows in the introduction, we describe in more detail the content of this paper. 

\subsection{Integral points on moduli schemes of elliptic curves}

To provide some motivation for the study of integral points on moduli schemes of elliptic curves, we discuss in the following section fundamental Diophantine equations which are related to such moduli schemes. For any $\beta\in \QQ$, we denote by $h(\beta)$ the usual (absolute) logarithmic Weil height of $\beta$ defined for example in \cite[p.16]{bogu:diophantinegeometry}.
\subsubsection{$S$-unit and Mordell equations}
Let $S$ be a finite set of rational prime numbers. We define $N_S=1$ if $S$ is empty and  $N_S=\prod p$ with the product taken over all $p\in S$ otherwise.
Let $\mathcal O^\times$ denote the units of $\mathcal O=\ZZ[1/N_S]$. First, we consider the classical $S$-unit equation 
\begin{equation}\label{eq:unit}
x+y=1, \ \ (x,y)\in \mathcal O^\times\times \mathcal O^\times.
\end{equation} 
The study of $S$-unit equations has a long tradition and it is known that many important Diophantine problems are encapsulated in the solutions of (\ref{eq:unit}).  For example, any upper bound  for $h(x)$ which is linear in terms of $\log N_S$  is equivalent to a version of the $(abc)$-conjecture. 
Mahler \cite{mahler:approx1}, 
Faltings \cite{faltings:finiteness} and Kim \cite{kim:siegel} proved finiteness of (\ref{eq:unit}) by completely different methods. Moreover, Baker's method \cite{baker:logarithmicforms} or a method of Bombieri  \cite{bombieri:effdioapp} both allow in principle to find all solutions of any $S$-unit equation.  We will briefly  discuss the methods of Baker, Bombieri, Faltings, Kim and Mahler in Section \ref{subsec:p1altmethods}. In addition, we now point out that Frey remarked in \cite[p.544]{frey:ternary} that the Shimura-Taniyama conjecture implies finiteness of (\ref{eq:unit}). It turns out that one can make Frey's remark in \cite{frey:ternary}   effective and one obtains for example the following explicit result  (see Corollary \ref{cor:p1}): Any solution $(x,y)$ of the $S$-unit equation $(\ref{eq:unit})$ satisfies $$h(x),h(y)\leq \frac{3}{2}n_S(\log n_S)^2+65, \ \ \ n_S=2^7N_S.$$
(After we uploaded the present paper to the arXiv, Hector Pasten informed us about his joint work
with Ram Murty \cite{mupa:modular} in which they independently obtain a (slightly) better
version of the displayed height bound (see \cite[Theorem 1.1]{mupa:modular}) by using a similar method; we refer to the comments below Corollary \ref{cor:p1} for more details. We would like to thank Hector Pasten for informing us about \cite{mupa:modular}.)
Frey uses inter alia his construction of Frey curves. This construction is without doubt brilliant, but rather ad hoc and thus works only in quite specific situations. The starting point for our generalizations are the following two observations: The solutions of (\ref{eq:unit}) correspond to integral points on the moduli scheme $\mathbb P^1_{\ZZ[1/2]}-\{0,1,\infty\}$, and the construction of Frey curves may be viewed as an explicit Par{\v{s}}in construction induced by forgetting the level structure on the elliptic curves parametrized by the points of $\mathbb P^1_{\ZZ[1/2]}-\{0,1,\infty\}$. 

We now discuss a second fundamental
Diophantine equation which is related to integral points on moduli schemes. For any nonzero $a\in \mathcal O$, one obtains a Mordell equation
\begin{equation}\label{eq:mordell}
y^2=x^3+a, \ \   (x,y)\in \mathcal O\times \mathcal O.
\end{equation}
We shall see in Section \ref{sec:mordell} that this Diophantine equation is a priori more difficult than (\ref{eq:unit}). In fact the resolution of (\ref{eq:mordell}) in $\ZZ\times \ZZ$ is equivalent to the classical problem of finding all perfect  squares and perfect cubes with given difference, which goes back at least to Bachet 1621. 
Mordell \cite{mordell:1922,mordell:1923}, Faltings \cite{faltings:finiteness} and Kim \cite{kim:cm} showed finiteness of (\ref{eq:mordell}) by using completely different proofs, and the first effective result for Mordell's equation was provided by Baker \cite{baker:mordellequation}; see Section \ref{subsec:maltmethods} where we briefly discuss methods which show finiteness of (\ref{eq:mordell}).
On working out explicitly the method of this paper for the moduli schemes corresponding to Mordell equations, we get a new effective finiteness proof for (\ref{eq:mordell}). More precisely, if
$a_S=2^83^5N_S^2\prod p^{\min(2,\textnormal{ord}_p (a))}$
with the product taken over all rational primes $p\notin S$ with $\textnormal{ord}_p (a)\geq 1$, then Corollary \ref{cor:m} proves that any solution $(x,y)$ of  $(\ref{eq:mordell})$ satisfies $$h(x),h(y)\leq h(a)+4a_S(\log a_S)^2.$$
\noindent This inequality allows in principle to find all solutions of any Mordell equation (\ref{eq:mordell}) and it provides in  particular an entirely new proof of Baker's classical result \cite{baker:mordellequation}. Moreover, the displayed estimate improves the actual best upper bounds for (\ref{eq:mordell}) in the literature  and it refines and generalizes Stark's theorem \cite{stark:mordell}; see  Section \ref{sec:mordell} for more details.

We observe that $S\mapsto \sp(\ZZ)-S$ defines a canonical bijection between the set of finite sets of rational primes and the set of non-empty open subschemes of $\sp(\ZZ)$. In what follows in this paper (except Sections \ref{sec:p1}-\ref{sec:thue}),  we will adapt our notation to the algebraic geometry setting and the symbol $S$ will denote a base scheme. 

\subsubsection{Integral points on moduli schemes of elliptic curves}\label{sec:modintro}
More generally, we now consider integral points on arbitrary moduli schemes of elliptic curves. We denote by $T$ and $S$ non-empty open subschemes of $\sp(\ZZ)$, with $T\subseteq S$. Let $Y=M(\mathcal P)$ be a moduli scheme of elliptic curves, which is defined over $S$, and let $\lvert \mathcal P\rvert_T$ be the maximal (possibly infinite) number of distinct level $\mathcal P$-structures on an arbitrary elliptic curve over $T$; see Section \ref{sec:parshin} for the definitions. We denote by $Y(T)$ the set of $T$-points of the $S$-scheme $Y$. Let $h_M$ be the pullback of the relative Faltings height by the canonical forget $\mathcal P$-map,  defined in (\ref{def:height}). Write $\nu_T=12^3\prod p^2$
with the product taken over all rational primes $p$ not in $T$. We obtain  in Theorem \ref{thm:ms} the following result.

\vspace{0.3cm}
\noindent{\bf Theorem A.}
\emph{The following statements hold. 
\begin{itemize}
\item[(i)] The cardinality of $Y(T)$ is at most $\frac{2}{3}\lvert \mathcal P \rvert_T\nu_T\prod (1+1/p)$ with the product taken over all rational primes $p$ which divide $\nu_T$.
\item[(ii)] If $P\in Y(T)$, then $h_M(P)\leq \frac{1}{4}\nu_T(\log \nu_T)^2+9$.
\end{itemize}}
\vspace{0.1cm}
If the moduli problem $\mathcal P$ is given with $\lvert \mathcal P \rvert_T<\infty$, then the explicit upper bound for the height $h_M$ in (ii) has the following application: In principle one can  determine the abstract set $Y(T)$ up to a canonical bijection; see the discussion surrounding (\ref{def:height}).
Part (i) gives a quantitative finiteness result for $Y(T)$ provided that $\lvert \mathcal P \rvert_T<\infty$.  In fact most moduli schemes of interest in arithmetic, in particular all explicit moduli schemes considered in this paper, trivially satisfy  $\lvert \mathcal P\rvert_T<\infty.$ However, any scheme over an arbitrary $\ZZ[1/2]$-scheme is a moduli scheme of elliptic curves (see Section \ref{sec:parshin})  and thus there exist many open subschemes $S\subset\sp(\ZZ)$ and moduli schemes $Y$ over $S$ such that $Y(S)$ is infinite.
 
In addition, we show that the Shimura-Taniyama conjecture :=$(ST)$ allows to deal with other classical Diophantine problems. For example, we consider cubic Thue equations, we derive an exponential version of Szpiro's discriminant conjecture for any elliptic curve over $\QQ$, and we deduce an effective Shafarevich conjecture for elliptic curves over $\QQ$.

We remark that the theory of logarithmic forms gives more general versions of the results discussed so far, see \cite{rvk:szpiro,rvk:height}. However, the approach via $(ST)$ has other advantages. For instance, in the two examples which we worked out explicitly, we obtained upper bounds with  numerical constants that are smaller than those coming from the theory of logarithmic forms. Furthermore, in the forthcoming joint work with Benjamin Matschke \cite{vkma:computation}, we will  estimate more precisely the quantities appearing in our proofs to further improve our final numerical constants.  This will allow us to practically resolve $S$-unit and Mordell equations  with ``small" parameters. In fact the practical resolution of these Diophantine equations is still a challenging problem; see for example Gebel-Peth\"o-Zimmer \cite{gepezi:mordell} for partial results on Mordell's equation. We also point out that $(ST)$ has in addition the potential to find the solutions of Diophantine equations without using height bounds. For instance, we shall see in the proof of Theorem A that integral points on moduli schemes of elliptic curves correspond to elliptic curves over $\QQ$ of bounded conductor, which in turn correspond by $(ST)$ to certain newforms of bounded level and such newforms can be computed by Cremona \cite{cremona:algorithms}. We refer to \cite{vkma:computation} for details.

\subsubsection{Principal ideas of Theorem A}  

We continue the notation of the previous section. Let $T\subseteq S\subseteq \sp(\ZZ)$ be as above and suppose that $Y=M(\mathcal P)$ is a moduli scheme  over $S$ with  $\lvert \mathcal P\rvert_T<\infty$. To describe our finiteness proofs for $Y(T)$, we denote by $M(T)$ the set of isomorphism classes of elliptic curves over $T$.  Forgetting the level structure $\mathcal P$ induces a canonical map (see Lemma \ref{lem:moduli})
\begin{equation}\label{eq:forgetful}
Y(T)\to M(T)
\end{equation}
which has fibers of cardinality at most $\lvert \mathcal P\rvert_T<\infty$. Hence to show finiteness of $Y(T)$, it suffices to control $M(T)$. This can be done in two steps: (a)  Finiteness of $M(T)$ up to isogenies  and (b) finiteness of each isogeny class of $M(T)$. Mazur-Kenku \cite{kenku:ellisogenies} implies (b), and (a) follows from $(ST)$ \cite{breuil:modular} which provides an abelian variety $J_0(\nu_T)$ over $\QQ$ of controlled dimension such that the generic fiber $E_\QQ$ of any $[E]\in M(T)$ is a quotient 
\begin{equation}\label{eq:gemstc}
J_0(\nu_T)\to E_\QQ.
\end{equation}
This leads to Theorem A (i).  To prove the explicit height bounds in Theorem A (ii), it suffices by (\ref{eq:forgetful}) to control the relative Faltings height $h(E)$ of $E_\QQ$ (see Section \ref{sec:heights}). We first work out explicitly an estimate of Frey \cite{frey:linksulm} which relies on several non-trivial results, including \cite{kenku:ellisogenies}: If $E_\QQ$ is modular, then Frey estimates  $h(E)$ in terms of the modular degree $m_f$ of the  newform $f$ associated with $E_\QQ$. The theory of modular forms allows to bound $m_f$ in terms of the level $N_E$ of $f$, and $(ST)$ says that $E_\QQ$ is modular. Hence one obtains an estimate for $h(E)$ in terms of $N_E$, which then leads to Theorem A (ii). 

To obtain upper bounds for heights on $Y(T)$ which are different to $h_M$, it remains to work out height comparisons. In the two examples discussed above, one can do this explicitly by using explicit formulas for certain (Arakelov) invariants of elliptic curves.

We emphasize that the crucial ingredients for Theorem A are (\ref{eq:forgetful}) and the ``geometric" version (\ref{eq:gemstc}) of $(ST)$ which relies inter alia on the Tate conjecture \cite{faltings:finiteness}. The other tools, such as Frey's estimate, the theory of modular forms and the isogeny results of Mazur-Kenku \cite{kenku:ellisogenies}, can be replaced by Arakelov theory and isogeny estimates; see Section \ref{subsec:principal} below.  In fact the proof of Theorem A may be viewed as an application  of a refined Arakelov-Faltings-Par{\v{s}}in method to moduli schemes of elliptic curves.
\subsection{Effective Shafarevich conjecture}

In 1983, Faltings \cite{faltings:finiteness} proved the Shafarevich conjecture \cite{shafarevich:conjecture} for abelian varieties over number fields.  It is known that an effective version of the Shafarevich conjecture would have striking Diophantine applications. For example,  
we show in Section \ref{sec:es} that the following effective Shafarevich conjecture $(ES)$ implies the effective Mordell conjecture for any curve of genus at least 2, defined over an arbitrary number field. 

Let $S$ be a non-empty open subscheme of $\sp(\ZZ)$, and let $g\geq 1$ be an integer. We denote by $h_F$  the stable Faltings height, defined in Section \ref{sec:heights}.

\vspace{0.3cm}
\noindent{\bf Conjecture ($ES$).}
\emph{There exists an effective constant $c$, depending only on $S$ and $g$, such that any abelian scheme $A$  over $S$ of relative dimension $g$ satisfies $h_F(A)\leq c.$}
\vspace{0.3cm}

We mention that Conjecture $\ces$ is widely open if $g\geq 2$ and we point out that $\ces$ implies in particular the ``classical'' effective Shafarevich conjecture for curves over arbitrary number fields; we refer to Section \ref{sec:es} for a discussion of Conjecture $\ces$.

Let $A$ be an abelian scheme over $S$ of relative dimension $g$. We say that $A$ is of  $\gl2$-type if there exists a number field $F$ of degree $[F:\QQ]=g$ together with an embedding $F\hookrightarrow \nd(A)\otimes_\ZZ \QQ.$ Here $\nd(A)$ denotes the ring of $S$-group scheme morphisms from $A$ to $A$. More generally, we say that $A$ is of product $\gl2$-type if $A$ is isogenous to a product of abelian schemes over $S$ of $\gl2$-type; see Section \ref{sec:gl2} for a discussion of abelian schemes of product $\gl2$-type.  Write $N_S=\prod p$ with the product taken over all rational primes $p$ not in $S$. We prove in Theorem \ref{thm:es} the following result. 

\vspace{0.3cm}
\noindent{\bf Theorem B.}
\emph{If $A$ is of product $\gl2$-type, then 
$h_F(A)\leq (3g)^{144g}N_S^{24}.$}
\vspace{0.3cm}

This explicit Diophantine inequality establishes in particular the effective Shafarevich conjecture $(ES)$ for all abelian schemes of product $\gl2$-type. In addition, we deduce in Corollary \ref{cor:esc} new cases of the ``classical" effective Shafarevich conjecture for curves, and we derive in Corollary \ref{cor:isoest} new isogeny estimates  for any $A$ of product $\gl2$-type. 

Next, we consider  the set $M_{\gl2,g}(S)$ formed by the isomorphism classes  of abelian schemes over $S$ of relative dimension $g$ which are of product $\gl2$-type.  We obtain in Theorem \ref{thm:qes} the following quantitative finiteness result for $M_{\gl2,g}(S)$.

\vspace{0.3cm}
\noindent{\bf Theorem C.}
\emph{The cardinality of $M_{\gl2,g}(S)$ is at most $(14g)^{(9g)^6}N_S^{(18g)^4}.$}
\vspace{0.3cm}

We deduce in Corollary \ref{cor:qisos}  explicit finiteness results for $\QQ$-isogeny classes of abelian varieties over $\QQ$ of product $\gl2$-type. Further, we mention that Brumer-Silverman \cite{brsi:number}, Poulakis \cite{poulakis:corrigendum} and Helfgott-Venkatesh \cite{heve:integralpoints} established the important special case $g=1$ of Theorem C. They used completely different arguments which in fact give a better exponent for $N_S$ if $g=1$. However, their methods crucially depend on the explicit nature of elliptic curves and they do not allow to deal with higher dimensional abelian varieties; see the discussion surrounding Proposition \ref{prop:shaf} for more details. 

We remark that the above results open the way for the effective study of classes of Diophantine equations which appear to be beyond the reach of the known effective methods. For instance, Theorems B and C are the main tools of the joint paper with Arno Kret \cite{vkkr:intpointsshimura}. Therein we combine these results with canonical forgetful maps in the sense of (\ref{eq:forgetful}), and we prove quantitative and effective finiteness results for integral points on higher dimensional moduli schemes which parametrize abelian schemes of $\gl2$-type. In particular, we work out  the case of Hilbert modular varieties.

\subsubsection{Principal ideas of Theorems B and C}\label{subsec:principal}

We continue the notation of the previous section. Let $S\subseteq \sp(\ZZ)$ and $g\geq 1$ be as above, and let $h_F$ be the stable Faltings height. Suppose that $A$ is an abelian scheme over $S$ of relative dimension $g$ which is of product $\gl2$-type. Write $A_\QQ$ for the generic fiber of $A$.  To prove Theorem B we combine ideas of Faltings \cite{faltings:finiteness} with the following tools:

\begin{itemize}
\item[(i)] If $A_\QQ$ is $\QQ$-simple, then it is a quotient
$
J_1(N)\to A_\QQ 
$ 
of the usual modular Jacobian $J_1(N)$ of some level $N$. Ribet \cite{ribet:gl2} deduced this statement from Serre's modularity conjecture \cite{khwi:serre} by using inter alia the Tate conjecture \cite{faltings:finiteness}.

\item[(ii)] Isogeny estimates for abelian varieties over number fields. These estimates were proven by the method of Faltings \cite{faltings:finiteness}, or by the transcendence method of Masser-W\"ustholz \cite{mawu:abelianisogenies,mawu:factorization};  see Section \ref{sec:var} for more details.

\item[(iii)] Bost's \cite{bost:lowerbound}  lower bound for $h_F$ in terms of the dimension, and Javanpeykar's \cite{javanpeykar:belyi} upper bound for the stable Faltings height of Belyi curves in terms of the Belyi degree and the genus. These results are based on Arakelov theory.
\end{itemize}
Let $N_A$ be the conductor of $A$, defined in Section \ref{sec:cond}. In the proof of Theorem B we first consider the case when $A_\QQ$ is $\QQ$-simple.  A result of Carayol \cite{carayol:conductor} allows to control the number  $N$ in (i). This together with  (i)-(iii) leads to an effective bound  for $h_F(A)$ in terms of $N_A$ and $g$, and then in terms of $N_S$ and $g$ since $A$ is an abelian scheme over $S$.  To reduce the general case of Theorem B to the  case when $A_\QQ$ is $\QQ$-simple, we use inter alia Poincar\'e's reducibility theorem and the isogeny estimates in (ii).

We now describe the principal ideas of Theorem C. Following Faltings \cite{faltings:finiteness} we divide our quantitative finiteness proof for $M_{\gl2,g}(S)$ into two parts: (a) Finiteness of $M_{\gl2,g}(S)$ up to isogenies and (b) finiteness of each isogeny class of $M_{\gl2,g}(S)$. To prove (a) we use (i) and we show that any $\QQ$-simple ``factor" $A_i$ of $A_\QQ$ is a quotient $$J_1(\nu)\to A_i,$$ where $\nu$ is an integer depending only on $S$ and $g$.  To show (b) we combine Theorem B with an estimate of Masser-W\"ustholz \cite{mawu:abelianisogenies,mawu:factorization} for the minimal degree of isogenies of abelian varieties which is based on  transcendence theory. In fact we use here the most recent version of the Masser-W\"ustholz estimate, due to Gaudron-R\'emond \cite{gare:isogenies}.

We remark that results from transcendence theory are not crucial to prove Conjecture $\ces$ for abelian schemes $A$ over $S$ of product $\gl2$-type (resp. to effectively estimate  $\lvert M_{\gl2,g}(S)\rvert$). However, they lead to an upper  bound for $h_F(A)$ (resp. for $\lvert M_{\gl2,g}(S)\rvert$) which is exponentially (resp. double exponentially) better in terms of $N_S$ and $g$, than the estimate which would follow by using the results in (ii) based on Faltings' method.

\subsection{Plan of the paper}

 In Section \ref{sec:heights} we discuss properties of Faltings heights and of the conductor of abelian varieties over number fields. In Section \ref{sec:parshin} we give  Par{\v{s}}in constructions for moduli schemes of elliptic curves and in Section \ref{sec:var} we collect results which control the variation of Faltings heights in an isogeny class.  In Section \ref{sec:modular} we use the theory of modular forms to bound the modular degree of elliptic curves over $\QQ$. We also estimate the stable Faltings heights of certain classical modular Jacobians. Then  we prove in Section \ref{sec:hcell} an explicit height conductor inequality for elliptic curves over $\QQ$ and we derive some applications. In Section \ref{sec:finiteness} we give our effective finiteness results for integral points on moduli schemes of elliptic curves. 
In Section \ref{sec:gl2} we prove a height conductor inequality for abelian varieties over $\QQ$ of product $\gl2$-type. Finally, we establish in Section \ref{sec:es} the effective Shafarevich conjecture $\ces$ for abelian schemes of product $\gl2$-type and we deduce some applications.

We mention that the setting of certain preliminary sections will be more general than
is necessary for the proofs of the main results of this paper, since we wish also to look ahead to future work \cite{rvk:height,vkkr:intpointsshimura}. 

\subsection{Acknowledgements}

I would like to thank Richard Taylor for answering several questions, in particular for proposing a first strategy to prove Lemma \ref{lem:moddeg}. Many thanks go to Bao le Hung, Arno Kret, Benjamin Matschke, Richard Taylor, Jack Thorne and Chenyan Wu for motivating discussions.  Parts of the results were obtained when I was a member (2011/12) at the IAS Princeton, supported by the NSF under agreement No. DMS-0635607. I am grateful to the IAS  and the IH\'ES for providing excellent working conditions. 
Also, I would like to apologize for the long delay between the first presentation (2011) of the initial results on $S$-unit and Mordell equations   and the completion (2013) of the manuscript. The delay resulted from the attempt to understand the initial examples in a way which is more conceptual and which is suitable for generalizations.

\subsection{Conventions and notations}

We identify a nonzero prime ideal of the ring of integers $\OK$ of a number field $K$ with the corresponding finite place $v$ of $K$ and vice versa.  
We write $N_v$ for the number of elements in the residue field of $v$, we denote by $v(\mathfrak a)$ the order of $v$ in a fractional ideal $\mathfrak a$ of $K$ and we write $v\mid \mathfrak a$ (resp. $v\nmid \mathfrak a$) if $v(\mathfrak a)\neq 0$ (resp. $v(\mathfrak a)=0$). If $A$ is an abelian variety over $K$ with semi-stable reduction at all finite places of $K$, then we say that $A$ is semi-stable. 
 
Let $S$ be an arbitrary scheme. We often identity an affine scheme $S=\sp(R)$ with the ring $R$. If $T$ and $Y$ are $S$-schemes, then we denote by $Y(T)=\textnormal{Hom}_S(T,Y)$ the set of $S$-scheme morphisms from $T$ to $Y$ and we write $Y_T=Y\times_S T$ for the base change of $Y$ from $S$ to $T$. Further, if $A$ and $B$ are abelian schemes over $S$, then we denote by $\textnormal{Hom}(A,B)$  the abelian group of $S$-group scheme morphisms from $A$ to $B$ and we write $\nd(A)=\textnormal{Hom}(A,A)$ for the endomorphism ring of $A$. Following \cite{bolura:neronmodels}, we say that $S$ is a Dedekind scheme if $S$ is a normal noetherian scheme of dimension 0 or 1. 

By $\log$ we mean the principal value of the natural logarithm and we define the maximum of the empty set and the product taken over the empty set as $1$. For any set $M$, we denote by $\lvert M\rvert$ the (possibly infinite) number of distinct elements of $M$. Let $f_1,f_2$ be real valued functions on $M$. We write $f_1\ll f_2$ if there exists a constant $c$ such that $f_1\leq cf_2$. Finally, for any map $f:\mathbb R_{>0}\to \mathbb R_{>0}$, 
we write $f_1\ll_\epsilon f_2^{f(\epsilon)}$ if for all $\epsilon>0$ there exists a constant $c(\epsilon)$, depending only on $\epsilon$, such that $f_1\leq c(\epsilon)f_2^{f(\epsilon)}.$

\section{Height and conductor of abelian varieties}\label{sec:heights}
Let $K$ be a number field and let $A$ be an abelian variety over $K$. In the first part of this section, we recall the definition of the relative and the stable Faltings height of $A$, and we review fundamental properties of these heights. In the second part, we define the conductor $N_A$ of $A$ and we recall useful properties of $N_A$.

\subsection{Faltings heights}

We begin to define the relative and stable Faltings height of $A$ following \cite[p.354]{faltings:finiteness}.
If $A=0$ then we set $h(A)=0$. We now assume that $A$ has positive dimension $g\geq 1$. Let $B$ be the spectrum of the ring of integers of $K$. We denote by $\mathcal A$ the N\'eron model of $A$ over $B$, with zero section $e:B\to \mathcal A$.
Let $\Omega^g$ be the sheaf of relative differential $g$-forms of $\mathcal A/ B$.
We now metrize the line bundle $\omega=e^*\Omega^g$ on $B$. For any embedding $\sigma: K\hookrightarrow \mathbb C$, we denote by $A^\sigma$ the base change of $A$ to $\mathbb C$ with respect to $\sigma$. We choose a nonzero global section $\alpha$ of $\omega$. Let $\lVert\alpha_\sigma\rVert_\sigma$ be the positive real number that satisfies
$$\lVert\alpha_\sigma\rVert^2_\sigma=\left(\frac{i}{2}\right)^g\int_{A^\sigma(\mathbb C)}\alpha_\sigma\wedge \overline{\alpha_\sigma},$$ 
where $\alpha_\sigma$ denotes the holomorphic differential form on $A^\sigma$ which is induced by $\alpha$.
Then the relative Faltings height $h(A)$ of $A$ is the real number defined by
\begin{equation*}
[K:\QQ]h(A)=\log\left|\omega/\alpha\omega\right|-\sum\log\lVert\alpha_\sigma\rVert_\sigma
\end{equation*}
with the sum taken over all embeddings $\sigma:K\hookrightarrow \mathbb C$. 
The product formula assures that this definition does not depend on the choice of $\alpha$.
The relative Faltings height is compatible with products of abelian varieties: If $A'$ is an abelian variety over $K$, then  
$
h(A\times_K A')=h(A)+h(A').
$
To see the behaviour of $h$ under base change we take a finite field extension $L$ of $K$.  The universal property of N\'eron models implies
\begin{equation}\label{eq:strict}
h(A_L)\leq h(A).
\end{equation}
This inequality can be strict 
and thus the height $h$ is in general not stable under base change.
To obtain a stable height we may (see \cite{grra:neronmodels}) and do take a finite extension $L'$ of $K$ such that $A_{L'}$  is semi-stable.
The stable Faltings height $h_F(A)$ of $A$ is defined as 
\begin{equation*}
h_F(A)=h(A_{L'}).
\end{equation*}
This definition does not depend on the choice of $L'$, since the formation of the identity components of the corresponding semi-stable N\'eron models commutes with the induced base change. 
In particular, inequality (\ref{eq:strict}) becomes an equality when $h$ is replaced by $h_F$. Further, we define $h_F(0)=0$. We shall need an effective lower bound for $h_F(A)$ in terms of the dimension $g$ of $A$. 
An explicit result of Bost \cite{bost:lowerbound} gives
\begin{equation}\label{eq:bost}
-\frac{g}{2}\log(2\pi^2)\leq h_F(A).
\end{equation}
See for example \cite[Corollaire 8.4]{gare:periods} and  notice that $h_F(A)=h_B(A)-\frac{g}{2}\log \pi$ where $h_B$ denotes the height which appears in the statement of \cite[Corollaire 8.4]{gare:periods}.

We shall state several of our results in terms of $h_F$ or $h$ and therefore we now briefly discuss important differences between these heights. From  (\ref{eq:strict}) we deduce that $h_F(A)\leq h(A).$ Further, as already observed, the height  $h_F$ has the advantage over $h$ that it is stable under base change.
On the other hand, $h_F$ has in general weaker finiteness properties. 
For instance, there are only finitely many $K$-isomorphism classes of elliptic curves over $K$ of bounded $h$, while $h_F$ is bounded on the infinite set given by the $K$-isomorphism classes of elliptic curves of any fixed $j$-invariant in $K$.

More generally, let $S$ be a connected Dedekind scheme with field of fractions $K$. If $A$ is an abelian scheme over $S$, then we define the stable and relative Faltings height of $A$ by $h_F(A)=h_F(A_K)$ and $h(A)=h(A_K)$ respectively. Here $A_K$ is the generic fiber of $A$.

\subsection{Conductor}\label{sec:cond}
We first define the conductor $N_A$ of an arbitrary abelian variety  $A$ over any number field $K$. Let $v$ be a finite place of $K$. We denote by $f_v$ the usual conductor exponent of $A$ at $v$, see for example \cite[Section 2.1]{serre:conductor} for a definition. The conductor $N_A$ of $A$ is defined by 
\begin{equation}\label{def:conductor}
N_A=\prod N_v^{f_v}
\end{equation}
with the product taken over all finite places $v$ of $K$. In particular,  $f_v(0)=0$ and $N_0=1$. We now recall some useful properties of $f_v$ and $N_A$. It holds that $f_v=0$ if and only if $A$ has good reduction at $v$. 
Furthermore, if $A'$ is an abelian variety over $K$ which is $K$-isogenous to $A$, then $f_v(A)=f_v(A')$ and thus $N_A=N_{A'}$. 
Finally, if $A'$ is an abelian variety over $K$ and if $C=A\times_K A'$, then  $f_v(C)=f_v(A)+f_v(A')$ and hence $N_{C}=N_AN_{A'}$. 

We shall need an explicit upper bound for $f_v$ in terms of $g=\dim(A)$ and $K$. Brumer-Kramer \cite{brkr:conductor} obtained such a bound by refining earlier work of Serre \cite[Section 4.9]{serre:representations} and of Lockhart-Rosen-Silverman \cite{lorosi:conductor}. To state the main result of \cite{brkr:conductor} we have to introduce some notation. Let $p$ be the residue characteristic of $v$, let $e_v=v(p)$ be the ramification index of $v$, and let $n$ be the largest integer that satisfies $n\leq 2g/(p-1)$. We define $\lambda_p(n)=\sum i r_ip^i$ for $\sum r_i p^i$ the $p$-adic expansion of $n=\sum r_i p^i$ with integers $0\leq r_i\leq p-1$. 
Then \cite[Theorem 6.2]{brkr:conductor} gives
\begin{equation}\label{eq:abcondineq}
f_v\leq 2g+e_v\bigl(pn+(p-1)\lambda_p(n)\bigl).
\end{equation} 
Furthermore, the examples in \cite{brkr:conductor} show that  (\ref{eq:abcondineq}) is best possible in a strong sense.

More generally, if $S$ is a connected Dedekind scheme with field of fractions $K$ and if $A$ is an abelian scheme over $S$, then we define the conductor $N_A$ of $A$ by $N_A=N_{A_K}$.

\section{Par{\v{s}}in constructions: Forgetting the level structure}\label{sec:parshin}
Par{\v{s}}in \cite{parshin:construction} discovered a link between the Mordell and the Shafarevich conjecture
which is now commonly known as Par{\v{s}}in construction or Par{\v{s}}in trick. 
This link gives a finite map from the set of rational points of $X$ into the integral points of a certain moduli space, 
where $X$ is a curve of genus at least two which is defined over a number field.

In the first part of this section, we use the moduli problem formalism to obtain tautological Par{\v{s}}in constructions for moduli schemes of elliptic curves. 
In the second part, we explicitly work out this idea for $\mathbb P^1-\{0,1,\infty\}$ and once punctured Mordell elliptic curves. 
This results in completely explicit Par{\v{s}}in constructions for these hyperbolic curves.

\subsection{Moduli schemes}

We begin to introduce some notation and terminology. Let $S$ be an arbitrary scheme. An elliptic curve over $S$ is an abelian scheme over $S$ of relative dimension one.
A morphism of elliptic curves over $S$ is a morphism of abelian schemes over $S$. 
We denote by $$\el(S)$$ the set of isomorphism classes of elliptic curves over $S$.  
On following Katz-Mazur \cite[p.107]{kama:moduli}, we write  $(Ell)$ for the category of elliptic curves over variable base-schemes: The objects are elliptic curves over schemes and the morphisms are given by cartesian squares of elliptic curves. Let $(Sets)$ be the category of sets and let $\mathcal P$ be a contravariant functor from $(Ell)$ to $(Sets)$.  
We say that $\mathcal P$ is a moduli problem on $(Ell)$ and we define  
\begin{equation}\label{def:ps}
\left|\mathcal P\right|_S=\sup\left|\mathcal P(E/S)\right|
\end{equation}
with the supremum taken over all elliptic curves $E$ over $S$. 
In other words,  $\lvert \mathcal P\rvert_S$ is the maximal (possibly infinite) number of distinct level $\mathcal P$-structures on an arbitrary elliptic curve over $S$.  A scheme $M(\mathcal P)$ is called a moduli scheme (of elliptic curves) if there exists a moduli problem $\mathcal P$ on $(Ell)$ which is representable by an elliptic curve over $M(\mathcal P)$. 
The following lemma may be viewed as a tautological Par{\v{s}}in construction for moduli schemes.

\begin{lemma}\label{lem:moduli}
Suppose $Y=M(\mathcal P)$ is a moduli scheme, defined over a scheme $S$. If $T$ is a $S$-scheme, then there is a map  $Y(T)\to M(T)$ with fibers of cardinality at most $\left|\mathcal P\right|_T$.
\end{lemma}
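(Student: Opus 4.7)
The plan is to read off both the map and the fiber bound directly from the representability property, since the assertion that $Y=M(\mathcal P)$ is a moduli scheme is by definition the statement that $\mathcal P$ is represented by an elliptic curve $\mathcal E$ over $Y$. First I would unpack the representability: there exist an elliptic curve $\mathcal E / Y$ and a universal level $\mathcal P$-structure $\alpha \in \mathcal P(\mathcal E / Y)$ such that, for every $S$-scheme $T$, pullback along a morphism $P\colon T\to Y$ gives a natural bijection between $Y(T)$ and the set of equivalence classes of pairs $(E,\beta)$, where $E$ is an elliptic curve over $T$ and $\beta\in\mathcal P(E/T)$, with $(E,\beta)\sim(E',\beta')$ iff some $T$-isomorphism $\varphi\colon E\xrightarrow{\sim}E'$ carries $\beta$ to $\beta'$. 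Explicitly, $P\in Y(T)$ corresponds to the class of $(P^*\mathcal E, P^*\alpha)$.

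Next I would define the desired map $\pi\colon Y(T)\to M(T)$ as the tautological forgetful map $P\mapsto [P^*\mathcal E]$; via the above bijection this reads $[(E,\beta)]\mapsto[E]$, which is well-defined on equivalence classes since the relation already requires $E\cong E'$. To bound the fibers, I would fix a class $[E]\in M(T)$ in the image, choose a representative $E/T$, and observe that the preimage $\pi^{-1}([E])$ is in bijection with the quotient of $\mathcal P(E/T)$ by the pullback action of the automorphism group $\textnormal{Aut}(E/T)$, since any two pairs $(E,\beta),(E,\beta')$ whose underlying elliptic curve has been rigidified to the same $E$ are equivalent precisely when some $\varphi\in\textnormal{Aut}(E/T)$ sends $\beta$ to $\beta'$. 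In particular
$$\lvert \pi^{-1}([E])\rvert \leq \lvert \mathcal P(E/T)\rvert \leq \lvert \mathcal P\rvert_T,$$
where the last inequality is the definition (\ref{def:ps}).

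The statement is essentially a tautology once representability is spelled out, so the main obstacle is purely bookkeeping: one must set up the equivalence relation on pairs $(E,\beta)$ so that the bijection with $Y(T)$ is precise and so that, after fixing a representative $E$ of an isomorphism class, the fiber is genuinely the orbit space $\mathcal P(E/T)/\textnormal{Aut}(E/T)$ rather than some coarser or finer quotient. Once that is in place, the lemma is an immediate translation of the universal property of $(Y,\mathcal E,\alpha)$, and no further input from the theory of elliptic curves or of moduli problems is needed.
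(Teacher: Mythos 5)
Your proposal is correct and follows essentially the same route as the paper: both unpack representability into a bijection between $Y(T)$ and isomorphism classes of pairs $(E,\beta)$, define the map by forgetting $\beta$, and bound the fiber by rigidifying all representatives to a single curve $E$ so that the fiber injects into (in your phrasing, is the orbit space of) $\mathcal P(E/T)$. Your explicit identification of the fiber with $\mathcal P(E/T)/\textnormal{Aut}(E/T)$ is a slightly sharper statement than the paper records, but the argument is the same.
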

\begin{proof}
We notice that the statement is intuitively clear, since $Y(T)$ is essentially the set of elliptic curves over $T$ with ``level $\mathcal P$-structure'' and the map is essentially ``forgetting the level $\mathcal P$-structure''.  We now verify that this intuition is correct. 

By assumption, there exists a contravariant functor $\mathcal P$ from $(Ell)$ to $(Sets)$ which is representable by an elliptic curve over $Y$. Suppose  $E$ and $E'$ are elliptic curves over a scheme $Z$, 
with $\alpha\in\mathcal P(E)$ and $\alpha'\in\mathcal P(E')$.
Then the pairs $(E,\alpha)$ and $(E',\alpha')$ are called isomorphic 
if there exists an isomorphism $\varphi:E\to E'$ of objects in $(Ell)$ with $\mathcal P(\varphi)(\alpha')=\alpha$.
Let $F(Z)$ be the set of isomorphism classes of such pairs $(E,\alpha)$.
Then $Z\mapsto F(Z)$ defines a contravariant functor from the category of schemes to $(Sets)$, which is representable by $Y$ since $\mathcal P$ is representable by an elliptic curve over $Y$. 
Thus we obtain an inclusion $Y(T)\hookrightarrow F(T)$, 
which composed with   
$$F(T)\to M(T): [(E,\alpha)]\mapsto [E]$$ gives a map $Y(T)\to M(T)$. 
Suppose $\{[(E_i,\alpha_i)], 1\leq i\leq n\}$ is the fiber of this map over a point in $M(T)$. Then all $E_i$ are isomorphic objects of $(Ell)$. 
Therefore, after applying suitable isomorphisms of objects in $(Ell)$, we may and do assume that all $E_i$ coincide. 
This shows that $n\leq \left|\mathcal P\right|_T$ and then we conclude Lemma \ref{lem:moduli}.
\end{proof}

We call the map constructed in Lemma \ref{lem:moduli} the forget $\mathcal P$-map. To discuss some fairly general examples of moduli schemes we consider an arbitrary scheme $Y$. If there exists an elliptic curve $E$ over $Y$, then $Y=M(\mathcal P)$ is a moduli scheme with $\mathcal P=\textnormal{Hom}_{(Ell)}(-,E)$.  This shows in particular that any $\ZZ[1/2]$-scheme $Y$ is a moduli scheme, since there exists an elliptic curve $A$ over $\ZZ[1/2]$ and the base change $A_Y$ is an elliptic curve over $Y$. 
Next, we discuss a classical example of a moduli problem. Let $N\geq 1$ be an integer and consider the ``naive'' level $N$ moduli problem $\mathcal P_N$ from $(Ell)$ to $(Sets)$, defined by
\begin{equation*}
E/S\mapsto \{S\textnormal{-group-scheme isomorphisms } (\ZZ/N\ZZ)^2\tilde{=}E[N]\}.
\end{equation*}
Here we view $(\ZZ/N\ZZ)^2$ as a constant $S$-group-scheme  and $E[N]$ is the kernel of the $S$-homomorphism ``multiplication by $N$'' on the elliptic curve $E$ over $S$. If $\mathcal P_N(E/S)$ is non-empty and if $S$ is connected, then we explicitly compute  
\begin{equation}\label{eq:compgn}
\mathcal P_N(E/S)\cong \{\ZZ/N\ZZ\textnormal{-bases of } (\ZZ/N\ZZ)^2\}.
\end{equation} If $N\geq 3$ then \cite[Corollary 4.7.2]{kama:moduli} gives that $\mathcal P_N$ is a representable moduli problem on $(Ell)$, with moduli scheme $Y(N)=M(\mathcal P_N)$ a smooth affine curve over $\sp(\ZZ[1/N])$.

In the remaining of this section, we give two propositions. 
Their proofs consist essentially of working out explicitly Lemma \ref{lem:moduli} for particular moduli schemes, see the remarks given below the proofs of Propositions \ref{prop:p1par} and \ref{prop:mpar} respectively.

\subsection{Explicit constructions}\label{sec:explicitconstr}

We introduce and recall some notation. Let $K$ be a number field and write $B$ for the spectrum of the ring of integers $\OK$ of $K$. 
In the remaining of this section, we denote by $$S\to B$$ either a non-empty open subscheme of $B$ or the spectrum of the function field $K$ of $B$ 
and we write $\mathcal O=\OS(S)$.
Let $E$ be an elliptic curve over $S$. We denote by $h(E)$ and by $h_F(E)$
the relative and the stable Faltings height of the generic fiber $E_K$ of $E$ respectively, see Section \ref{sec:heights} for the definitions.
Let $N_E$ be the conductor of $E_K$ defined in Section \ref{sec:cond} and let $\Delta_E$ be the norm from $K$ to $\QQ$ of the usual minimal discriminant ideal of $E_K$ over $K$. 
We observe that $h(E),h_F(E),N_E$ and $\Delta_E$ define real valued functions on $\el(S)$. 

Let $Y=M(\mathcal P)$ be a moduli scheme defined over $S$, let $T$ be a non-empty open subscheme  of $S$ and let $\phi:Y(T)\to M(T)$ be the forget $\mathcal P$-map from Lemma \ref{lem:moduli}.  On pulling back the relative Faltings height $h$ by $\phi$, we get a height $h_M$ on $Y(T)$ defined by
\begin{equation}\label{def:height}
h_M(P)=h(\phi(P)), \ \ \ P\in Y(T).
\end{equation}
The height $h_M$ has the following properties: If $\left|\mathcal P\right|_T<\infty$, then Lemma \ref{lem:moduli} together with Lemma \ref{lem:wei} below shows that there exist only finitely many $P\in Y(T)$ with $h_M(P)$ bounded. Furthermore, if  $\mathcal P$ is given with $\left|\mathcal P\right|_T<\infty$, then the proof of Lemma \ref{lem:moduli} together with Lemma \ref{lem:wei} below implies that one can in principle determine, up to a canonical bijection, the set of points $P\in Y(T)$ with $h_M(P)$ effectively bounded.

Let $D_K$ be the absolute value of the discriminant of $K$ over $\QQ$, let $d=[K:\QQ]$ be the degree of $K$ over $\QQ$ and let $h_K$ be the cardinality of the class group of $B$. 
We define $$N_T=\prod N_v$$ with the product taken over all $v\in B-T$; notice that $N_T=\infty$ if $S=T=\sp(K)$. Further, we say that any nonzero  $\beta\in K$ is invertible on $T$ if $\beta$ and $\beta^{-1}$ are both in $\mathcal O_
T(T)$. For any vector $\beta$ with coefficients in $K$, we denote by $h(\beta)$ the usual absolute logarithmic Weil height of $\beta$ which is defined in \cite[1.5.6]{bogu:diophantinegeometry}.

\subsubsection{$S$-unit equations}\label{subsec:s-unit}

We continue the notation introduced above and we now give an explicit Par{\v{s}}in construction for ``$S$-unit equations''. The solutions of such equations correspond to $S$-points of $$\mathbb P^1_S-\{0,1,\infty\}=\sp(\mathcal O[z,1/(z(1-z))]).$$
To simplify notation we write $X=\mathbb P^1_S-\{0,1,\infty\}$. For any $P\in X(S)$, we define $h(P)=h(z(P))$.\footnote{If $Z$ is an affine $S$-scheme, $P\in Z(S)$ and $f\in \mathcal O_Z(Z)$, then $f(P)\in \mathcal O$ denotes the image of $f$ under the ring morphism $\mathcal O_Z(Z)\to \mathcal O$ which corresponds to $P:S\to Z$.} We say that a map of sets is finite if all its fibers are finite.

\begin{proposition}\label{prop:p1par}
Suppose that $T$ is an open subscheme of $S$, with $2$ invertible on $T$. Then there exists a finite map $\phi: X(S)\rightarrow \el(T)$ with the following properties.
\begin{enumerate}
 \item[(i)]  Suppose $P\in X(S)$ and $[E]=\phi(P)$. Then it holds $ N_E\leq 2^{6d}3^{5d} N_T^2$ and  \\  $h(P)\leq 6h_F(E)+3\log\bigl(\max(1,h_F(E))\bigl)+42.$
 \item[(ii)] There is  an elliptic curve $E'$ over $K$ that satisfies $h_F(E')=h_F(E)$ and \\ $N_{E'}\leq 2^{7d}3^{5d}D_K^{h_K-1}N_T.$
 \item[(iii)] If $B$ has trivial class group, then $E'$ extends to an elliptic curve over $T$ \\ 
and $N_{E'}\mid 2^{7d}N_T$. If $K=\QQ$, then $h(P)\leq 6h(E')+11.$  
\end{enumerate}
\end{proposition}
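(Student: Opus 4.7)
The plan is to use the classical Legendre family as the $\mathcal{P}$-structure. Given $P \in X(S)$ with $\lambda = z(P) \in \mathcal{O}^\times$ and $1-\lambda \in \mathcal{O}^\times$, the Legendre elliptic curve
$$E_\lambda: y^2 = x(x-1)(x-\lambda)$$
has discriminant $16\lambda^2(1-\lambda)^2$, which is a unit on $T$ since $2 \in \mathcal{O}_T^\times$ and $\lambda, 1-\lambda$ are units on $S \supseteq T$. Hence $E_\lambda$ restricts to an elliptic curve over $T$, and we set $\phi(P) = [E_\lambda|_T] \in \el(T)$. Since $X_T$ coarsely represents the $\Gamma(2)$-moduli problem over $T$, Lemma \ref{lem:moduli} applied to the naive level-$2$ moduli problem $\mathcal{P}_2$ shows $\phi$ has fibers of cardinality at most $|\mathcal{P}_2|_T \leq 6$, giving the asserted finiteness.

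For the conductor bound in (i), $E_\lambda|_T$ has good reduction throughout $T$, so $N_E = \prod_{v \notin T} N_v^{f_v}$. Applying (\ref{eq:abcondineq}) with $g = 1$ at each $v \notin T$ of residue characteristic $p$ (with $n = \lfloor 2/(p-1) \rfloor$) yields $f_v \leq 2$ for $p \geq 5$, $f_v \leq 2 + 3 e_v$ for $p = 3$, and $f_v \leq 2 + 6 e_v$ for $p = 2$. Aggregating via the identity $\prod_{v \mid p} N_v^{e_v} \leq p^d$ produces $N_E \leq 2^{6d} 3^{5d} N_T^2$. For the height bound, the Legendre $j$-invariant
$$j(E_\lambda) = 2^8 \, \frac{(1-\lambda+\lambda^2)^3}{\lambda^2(1-\lambda)^2}$$
is a rational function of degree $6$ in $\lambda$, so Weil-height functoriality yields $|h(j) - 6 h(\lambda)| \leq c_1$ with $c_1$ explicit. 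Combining this with Silverman's explicit comparison between the stable Faltings height and the $j$-height, of the form $h(j(E)) \leq 12 h_F(E) + 6 \log\max(1, h(j(E))) + c_2$, and iterating to absorb the logarithm, produces the claimed $h(P) \leq 6 h_F(E) + 3 \log \max(1, h_F(E)) + 42$.

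For (ii), we would replace $E_\lambda|_K$ by a suitable twist $E'/K$ chosen so that at each bad prime $v \notin T$ with $v \nmid 6$, the curve $E'$ has multiplicative (rather than additive) reduction; this sharpens $f_v \leq 2$ to $f_v \leq 1$ at such places, yielding the linear factor $N_T^1$ in place of $N_T^2$. The twist parameter is selected inside a prescribed ideal class, with Minkowski's argument providing a representative of controlled norm at the cost of the $D_K^{h_K-1}$ factor. Since quadratic twisting preserves the stable Faltings height, $h_F(E') = h_F(E)$. For (iii), when $B$ has trivial class group the twist parameter may be chosen with support in $B - T$ alone, so $E'$ extends to an elliptic curve over $T$ and the sharpened conductor estimate gives $N_{E'} \mid 2^{7d} N_T$. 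Over $K = \QQ$, semistability of the chosen twist at all odd primes together with a direct computation of $h(E')$ from the minimal discriminant of $E'$ (essentially $\lambda(1-\lambda)$ up to powers of $2$) yields the sharper bound $h(P) \leq 6 h(E') + 11$.

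The main obstacle will be the height comparison in (i): carefully tracking the constants in the Silverman $j$-versus-Faltings height inequality with its logarithmic correction, and iterating the resulting inequality to arrive at the stated $42$. A secondary delicate point is the twist construction in (ii) --- arranging the twist to be simultaneously minimal at primes of $B - T$ and harmless at primes of $T$, while controlling the class-group bookkeeping --- but this reduces to standard Kodaira-N\'eron classification together with Minkowski's estimate.
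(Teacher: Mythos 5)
Your construction matches the paper's almost exactly: the same Legendre family over $\ZZ[\lambda,1/(2\lambda(1-\lambda))]$ pulled back to $T$, the same conductor bookkeeping via Brumer--Kramer at $p=2,3$, and the same Minkowski argument to write $\lambda=l/m$ with coprime entries in a class-group-trivial localization for parts (ii)--(iii). Where you genuinely diverge from the paper is in the height bound in (i): you invoke the degree-$6$ Weil-height functoriality $|h(j)-6h(\lambda)|\le c_1$ for the morphism $\lambda\mapsto j$ of $\PP^1$, whereas the paper works directly with the local data of $j = 2^8(\lambda^2-\lambda+1)^3/(\lambda^2-\lambda)^2$ to get $h(\lambda)\le \tfrac{1}{2}h(j)+5\log 2$, and then plugs in Pellarin's estimate $h(j)\le 12 h_F(E)+6\log\max(1,h_F(E))+75.84$ directly (note: Pellarin's improvement over Silverman is precisely that the logarithmic correction is in $h_F(E)$ rather than $h(j_E)$, which is why the paper needs no iteration --- your ``iterating to absorb the logarithm'' is doing unnecessary work that the right citation dissolves). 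Functoriality is a legitimate and arguably more conceptual route, and if you work out the explicit $c_1$ (which essentially comes down to the resultant of $2^8(a^2-ab+b^2)^3$ and $a^2b^2(a-b)^2$, which is a power of $2$) you would in fact land on a coefficient of $2h_F(E)$ rather than $6h_F(E)$, which is of course enough to conclude the stated inequality with room to spare; but you should notice this discrepancy and not present the functoriality calculation as if it ``produces'' the $6h_F$ coefficient as stated. A smaller imprecision in (ii): your phrase ``the twist parameter is selected inside a prescribed ideal class'' does not quite capture the mechanism. The paper's argument is that after Minkowski provides an open $U\subset B$ with trivial class group and $|B-U|\le h_K-1$, one writes $\lambda=l/m$ with $l,m$ \emph{coprime} in $\mathcal O_U(U)$; the payoff is that for the curve $y^2=x(x-l)(x-m)$, coprimality forces $v(c_4)=0$ whenever $v(\Delta)\ge 1$ at odd $v\in U$, so the reduction there is multiplicative and $f'_v\le 1$. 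The ``twist'' language is compatible with this but glosses over the coprimality step, which is what actually forces semistability rather than merely being available to pursue.
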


In this article, we shall use Proposition \ref{prop:p1par} only for one dimensional $S$ and $T$. However, the height inequalities obtained in this proposition may be also of interest for $S=T=\sp(K)$. We mention that the number 6 in these height inequalities is optimal. 

To prove Proposition \ref{prop:p1par} we shall use inter alia the following lemma. 

\begin{lemma}\label{lem:dehe}
If $E$ is an elliptic curve over $S$, then $\log \Delta_E\leq 12d(h(E)+4/3).$
\end{lemma}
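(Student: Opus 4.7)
My plan is to pick an integral Weierstrass model of $E_K$ and then combine the resulting local integrality bounds with the classical archimedean invariant $|\Delta_W|\cdot\|\omega_W\|^{12}$. Concretely, I would choose a Weierstrass equation $y^2+a_1xy+a_3y=x^3+a_2x^2+a_4x+a_6$ for $E_K$ with $a_i\in\OK$, which exists after clearing denominators; let $\Delta_W\in\OK$ be its discriminant and $\omega_W=dx/(2y+a_1x+a_3)$ the corresponding global section of the Hodge bundle $\omega$ defined in Section~\ref{sec:heights}. Comparing $\omega_W$ with a N\'eron differential $\omega_v$ at each finite place $v$ of $K$ and using $v(\Delta_W)=12\,v(\omega_W/\omega_v)+v(\Delta_{E,v}^{\textnormal{min}})$, integrality of $W$ forces $v(\Delta_W)\geq v(\Delta_{E,v}^{\textnormal{min}})$ for every $v$, so
\[
\log\Delta_E\leq \log|N_{K/\QQ}(\Delta_W)|=\sum_\sigma\log|\Delta_{W,\sigma}|.
\]
At the same time, expanding $d\,h(E)$ from the definition in Section~\ref{sec:heights} and using again $v(\omega_W/\omega_v)\geq 0$ at all $v$ gives the lower bound $\sum_\sigma\log\|\omega_{W,\sigma}\|_\sigma\geq -d\,h(E)$.

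Next I would bound each archimedean term. The quantity $|\Delta_{W,\sigma}|\cdot\|\omega_{W,\sigma}\|_\sigma^{12}$ is invariant under admissible changes of Weierstrass variables, so it depends only on $E^\sigma$. Uniformising $E^\sigma(\CC)\cong\CC/\Lambda_\sigma$ with $\Lambda_\sigma=\omega_1(\ZZ+\tau_\sigma\ZZ)$ and $\tau_\sigma$ in the standard fundamental domain $\mathcal F$, the classical identities $\Delta(\tau)=(2\pi)^{12}\eta(\tau)^{24}$ and $\|dz\|^2=\textnormal{Im}(\tau)$ give
\[
\log|\Delta_{W,\sigma}|+12\log\|\omega_{W,\sigma}\|_\sigma = 12\log(2\pi)+24\log|\eta(\tau_\sigma)|+6\log\textnormal{Im}(\tau_\sigma).
\]
Using $\eta(\tau)=q^{1/24}\prod_{n\geq 1}(1-q^n)$ with $q=e^{2\pi i\tau}$, the right-hand side equals $12\log(2\pi)-2\pi y+6\log y$ plus a small Euler-product error, where $y=\textnormal{Im}(\tau_\sigma)\geq\sqrt{3}/2$. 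Calculus places the interior maximum at $y=3/\pi$, where $|q|=e^{-6}$ bounds the error by at most $0.06$, and a routine numerical check then yields the clean archimedean bound $\log|\Delta_{W,\sigma}|+12\log\|\omega_{W,\sigma}\|_\sigma\leq 16$.

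Assembling the pieces finishes the proof: summing this last bound over the $d$ embeddings and substituting $\sum_\sigma\log\|\omega_{W,\sigma}\|_\sigma\geq -d\,h(E)$ gives $\log|N_{K/\QQ}(\Delta_W)|\leq 16d+12d\,h(E)$, whence $\log\Delta_E\leq 12d(h(E)+4/3)$. The main obstacle is the numerical optimisation leading to the clean constant $4/3=16/12$; any looser control of the Euler-product tail or of the location of the interior maximum in $\mathcal F$ would immediately degrade the stated constant.
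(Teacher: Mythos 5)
Your overall strategy is the paper's: the paper simply cites Silverman's Proposition~1.1, which is precisely the identity $\log\Delta_E=12d\,h(E)+\sum_\sigma\log\bigl((2\pi)^{12}\lvert\Delta(\tau_\sigma)\rvert\,\textnormal{Im}(\tau_\sigma)^6\bigr)$ that you are trying to rederive, and then proves the same archimedean bound of $16$ per embedding; your computation (maximum of $12\log(2\pi)-2\pi y+6\log y$ at $y=3/\pi$ plus the Euler-product tail) matches the paper's estimate (\ref{eq:-delta}). The gap is in your non-archimedean bookkeeping, and it is a genuine one. For an integral Weierstrass model $W$ the comparison with the local minimal model reads $\Delta_W=u_v^{12}\Delta^{\min}_v$ and $\omega_W=u_v^{-1}\omega_v$ with $v(u_v)\geq 0$, so $v(\omega_W/\omega_v)=-v(u_v)\leq 0$ — the opposite of what you assert (your local identity should carry $-12$, not $12$; already $y^2=x^3+p^6$ over $\QQ$ has $\omega_W=p^{-1}\omega_{\min}$). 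Consequently your second inequality is false in general: substituting $\omega_W$ into the definition of the height gives $\sum_\sigma\log\lVert\omega_{W,\sigma}\rVert_\sigma=-d\,h(E)-\sum_v v(u_v)\log N_v\leq -d\,h(E)$, whereas your final step needs the reverse bound. The claimed inequality holds (with equality) only when $W$ is a global minimal model, which need not exist over a general number field.

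The repair is to keep the exact local factors instead of discarding them in two separate one-sided estimates. The overshoot in your first step is $\log\lvert N_{K/\QQ}(\Delta_W)\rvert-\log\Delta_E=12\sum_v v(u_v)\log N_v$, which is exactly twelve times the deficit in the second; adding the two identities makes the $u_v$-terms cancel and yields
\begin{equation*}
\log\Delta_E=12d\,h(E)+\sum_\sigma\bigl(\log\lvert\Delta_{W,\sigma}\rvert+12\log\lVert\omega_{W,\sigma}\rVert_\sigma\bigr),
\end{equation*}
which is Silverman's formula; your archimedean bound then finishes the proof. As written, however, the two inequalities do not chain, so the argument does not close without this correction.
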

\begin{proof}
For any embedding $\sigma:K\hookrightarrow \mathbb C$, we take $\tau_\sigma\in \mathbb C$ such that the base change of $E_K$ to $\mathbb C$ with respect to $\sigma$ takes the form $\mathbb C/(\ZZ+\tau_\sigma\ZZ)$ and such that $\im(\tau_\sigma)\geq \sqrt{3}/2$. 
We write $q=\exp(2\pi i \tau_\sigma)$ and
$\Delta(\tau_\sigma)=q\prod_{n=1}^{\infty}(1-q^n)^{24}$. From \cite[Proposition 1.1]{silverman:arithgeo} we get
\begin{equation*}
\log \Delta_E=12dh(E)+\sum\log\left|(2\pi)^{12}\Delta(\tau_\sigma)\im(\tau_\sigma)^6\right|
\end{equation*}
with the sum taken over all embeddings $\sigma:K\hookrightarrow \mathbb C$. Here  $\lvert\cdot \rvert$ denotes the complex absolute value. 
Further, on using the elementary inequalities $\log \left|\Delta(\tau_\sigma)/q\right|\leq 24\left|q\right|/(1-\left|q\right|)$ and $\left|q\right|\leq \exp(-\pi\sqrt{3})$, we deduce the estimate 
\begin{equation}\label{eq:-delta}
\log \left|(2\pi)^{12}\Delta(\tau_\sigma)\im(\tau_\sigma)^6\right|\leq 16. 
\end{equation}
This together with the displayed formula for $\log \Delta_E$ implies the statement. 
\end{proof}
We remark that the proof shows in addition that Faltings' delta invariant $\delta(E_\CC)$ \cite[p.402]{faltings:arithmeticsurfaces}  of a compact connected Riemann surface $E_\CC$ of genus one satisfies $$\delta(E_\CC)\geq -9.$$ Indeed, this follows directly from (\ref{eq:-delta}) and Faltings' explicit formula \cite[Lemma c), p.417]{faltings:arithmeticsurfaces} for $\delta(E_\CC)$. 
We mention that it is an important open  problem to obtain explicit lower bounds, in terms of the genus, for the Faltings delta invariant of compact connected Riemann surfaces of arbitrary positive genus.

We shall need  an estimate for the conductor. If $v$ is a closed point of $B$ and if $f_v$ denotes the conductor exponent at $v$ of an elliptic curve over $K$ (see Section \ref{sec:cond}), then
\begin{equation}\label{eq:cond23}
f_v\leq 2+6v(2) \textnormal{ if } v\mid 2 \textnormal{ and } f_v\leq 2+3v(3)  \textnormal{ if } v\mid 3.
\end{equation}
This follows directly from the result of Brumer-Kramer which we stated in (\ref{eq:abcondineq}).

\begin{proof}[Proof of Proposition \ref{prop:p1par}]
We observe that if $X(S)$ is empty, then all statements are trivial. Hence we may and do assume that $X(S)$ is not empty. We denote by $Y$ the spectrum  of $\ZZ[\lambda,1/(2\lambda(1-\lambda))]$ for $\lambda$ an ``indeterminate''. Then we observe that
$$y^2=x(x-1)(x-\lambda )$$ 
defines an (universal) elliptic curve $\mathcal E$ over $Y$. We take $P\in X(S)$. 
On using that $Y_T\cong X_T$, we obtain a morphism $T\to Y_T$ induced by $P$. 
Let $E$ be the fiber product of $\mathcal E_{Y_T}\to Y_T$ with this morphism $T\to Y_T$. Then  $E$ is  an elliptic curve over $T$ and therefore we see that $$P\mapsto [E]$$ defines a map $\phi:X(S)\rightarrow \el(T).$ 
If $P'\in X(S)$ satisfies $\phi(P)=\phi(P')$, then it follows that $z(P')=(z_1-z_2)/(z_3-z_2)$ with pairwise distinct $z_1,z_2,z_3\in \{0,1,z(P)\}$. Thus $\phi$ is finite.

We now prove (i). In what follows we write $\lambda$ for $z(P)$ to simplify notation. The $j$-invariant $j$ of the generic fiber $E_K$ of $E$ satisfies 
\begin{equation*}
j=2^8\frac{(\lambda^2-\lambda+1)^3}{(\lambda^2-\lambda)^{2}}. 
\end{equation*}
This implies that $2v(\lambda)=v(j)-8v(2)$ for any finite place $v$ of $K$ with $v(\lambda)\leq -1$ and that $\lvert \sigma(\lambda)\rvert^2\leq \lvert\sigma(j)\rvert$ for any embedding $\sigma:K\hookrightarrow \CC$ with $\lvert \sigma(\lambda)\rvert\geq 2$, where $\lvert\cdot \rvert$ denotes the complex absolute value. 
We deduce $$h(P)\leq h(j)/2+5\log 2.$$ 
Furthermore, Pellarin's \cite[p.240]{pellarin:isogenies} explicit calculation of the constant in Silverman's \cite[Proposition 2.1]{silverman:arithgeo} leads to
\begin{equation}\label{eq:pellarin}
h(j)\leq 12h_F(E)+6\log\bigl(\max(1,h_F(E))\bigl)+75.84. 
\end{equation} 
This implies an upper bound for $h(P)$ as stated in (i). Next, we prove the claimed estimate for the conductor $N_E$ of $E$. This estimate holds trivially if $T=\sp(K)$, and we now assume that $T\neq \sp(K)$. In what follows we denote by $v$  a closed point of $B$. Let $f_v$ be the conductor exponent of  $E_K$ at $v$. If $v\in T$, then $E_K$ has good reduction at $v$,
since  $E\to T$ is smooth and projective, and 
we obtain $f_v=0$. Thus the estimates in (\ref{eq:cond23}) for $f_v$ if $v\mid 6$ combined with $f_v\leq 2$ if $v\nmid 6$ lead to an upper bound for $N_E$ as stated in (i).

To show (ii) we observe that the statement is trivial if $T=\sp(K)$. Hence  we may and do assume that $T\neq \sp(K)$. As in the proof of \cite[Lemma 4.1]{rvk:hyperelliptic}, we see that Minkowski's theorem  gives  an open subscheme $U$ of $B$ with the following properties. There are at most $h_K-1$ points in $B-U$, any $v\in B-U$ satisfies $N_v^2\leq D_K$ and the class group of $U$ is trivial. 
Then we may and do take coprime elements $l,m\in\mathcal O_U(U)$ such that $$\lambda=l/m.$$ Let
$E'$ be an elliptic curve over $K$ defined by the Weierstrass equation $y^2=x(x-l)(x-m)$. 
We observe that $E'$ is geometrically isomorphic to $E_K$. This implies that the $j$-invariant of $E'$ coincides with $j$ and $h_F(E')=h_F(E)$. 
We now prove the claimed estimate for the conductor $N_{E'}$ of $E'$. Let $\Delta$ and $c_4$ be the usual quantities  associated to the above Weierstrass equation of $E'$, see  \cite[p.42]{silverman:aoes}. 
They take the form $$\Delta=2^4(lm(l-m))^2 \textnormal{ and } c_4=2^4((l-m)^2+lm).$$  
Let $f'_v$ be the conductor exponent of $E'$ at  $v$. First, we assume that $v\in U$ with $v\nmid 2$. 
If $v(\Delta)\geq 1$, then it follows that $v(c_4)=0$, since $l,m\in \mathcal O_U(U)$ are coprime and $v\nmid 2$. 
This implies that the above Weierstrass equation is minimal at $v$ 
and then \cite[p.196]{silverman:aoes} proves that $E'$ is semi-stable at $v$.
We conclude  $f'_v\leq 1$. 
Next, we assume $v\in U\cap T$. In the proof of (i) we showed $f_v=0$. This implies that $f'_v=0$, since $E_K$ is geometrically isomorphic to $E'$ and $E'$ is semi-stable at $v$. On combining the above observations, we deduce
$$
N_{E'}\leq 2^{-d}N_T\prod N_v^{f'_v} 
$$
with the product taken over all $v\in B$ such that  $v\in B-U$ or $v\mid 2$. 
Therefore, on using the properties of $U$, we see that the estimates in (\ref{eq:cond23}) for $f'_v$ if $v\mid 6$ combined with $f'_v\leq 2$ if $v\nmid 6$ imply an upper bound for $N_{E'}$ as claimed in (ii).

It remains to prove (iii). We notice that the first assertion of (iii) is trivial if $T=\sp(K)$. If $B$ has trivial class group, then we can take $U=B$ in the proof of (ii): It follows that $f'_v=0$ for any closed point $v\in T$ and that $$N_{E'}\leq 2^{-d}N_T\prod N_v^{2+6v(2)}$$ 
with the product taken over all $v\in B$ with $v\mid 2$.
This shows that $E'$ is the generic fiber of an elliptic curve over $T$ and that $N_{E'}\leq 2^{7d}N_T$. If $K=\QQ$, then we obtain that $h(P)\leq 1/2\log \lvert\Delta\rvert-2\log 2$, 
and \cite[p.257]{silverman:aoes} shows that $\lvert\Delta\rvert\leq 2^{12}\Delta_{E'}$.
Therefore Lemma \ref{lem:dehe} proves (iii). 
This completes the proof of Proposition \ref{prop:p1par}.\end{proof}
We remark that the elliptic curve $\mathcal E$ over $Y$, which appears in the above proof, represents the moduli problem $\mathcal P=[Legendre]$ on $(Ell)$ defined in \cite[p.111]{kama:moduli}. The moduli scheme $Y$ is defined over $\sp(\ZZ[1/2])$. If $2$ is invertible on $S$ and if $T=S$, then it follows that $X(S)=Y(S)$  and that the map $\phi:X(S)\to M(T)$ in Proposition \ref{prop:p1par} coincides with the map $Y(T)\to M(T)$ in Lemma \ref{lem:moduli}. 
However, to get our explicit inequalities in Proposition \ref{prop:p1par} it is necessary to take into account the particular shape of $\mathcal P=[Legendre]$.

\subsubsection{Mordell equations}\label{subsec:mordell}

We continue the notation introduced above and we now give an explicit Par{\v{s}}in construction for  Mordell equations.
For any nonzero $a\in \mathcal O$, we obtain that 
\begin{equation*}
Z=\sp\bigl(\mathcal O[x,y]/(y^2-x^3-a)\bigl)
\end{equation*}
defines  an affine Mordell curve  over $S$.
To state our next result we have to introduce some additional notation. If $P\in Z(S)$ then we write $h(P)=h(x(P))$. 
Let $R_K$ be the regulator of $K$ and let $r_K$ be the rank of the free part of the group of units $\mathcal O_K^\times$ of $\OK$. We define 
\begin{equation*}
\kappa=\log (D_K)/2d+79R_K(r_K!)r_K^{3/2}\log d,
\end{equation*}
and we observe that $\kappa=0$ when $K=\QQ$. The origin of the constant $\kappa$ shall be explained below Lemma \ref{lem:wei}. To measure the number $a\in\mathcal O$, we use inter alia the quantity
\begin{equation*}
r_2(a)=\prod N_v^{\min(2,v(a))} 
\end{equation*}
with the product taken over all closed points $v\in S$ with $v(a)\geq 1$. We observe that $\log r_2(a)\leq dh(a)$ and if $a\in \OK$, then $r_2(a)\leq \N(a)$ for $\N$ the norm from $K$ to $\QQ$.
 
\begin{proposition}\label{prop:mpar}
Suppose that $T$ is an open subscheme of $S$, with $6a$ invertible on $T$. Then there is a  map $\phi: Z(S)\rightarrow \el(T)$ with the following properties.
\begin{enumerate}
 \item[(i)] The map $\phi$ is finite. Furthermore, if $\pm 1$ are the only 12th roots of unity \\
in $K$, then $\phi$ is injective.
 \item[(ii)] Suppose $P\in Z(S)$ and $[E]=\phi(P)$. Then it holds $N_E\leq 2^{6d}3^{3d}N_T^{2}$ and \\
 $h(P)\leq \frac{1}{3}h(a)+8h(E)+2\log\bigl(\max(1,h_F(E))\bigl)+8\kappa+36.$
 \item[(iii)] If, in addition, $T=\sp(\mathcal O[1/(6a)])$, then $N_E\leq 2^{8d}3^{5d}D_KN_S^2r_2(a).$
\end{enumerate}
\end{proposition}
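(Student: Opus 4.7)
The plan is to mimic the proof of Proposition \ref{prop:p1par} by choosing a universal Weierstrass equation over $Z_T$. I would take
\begin{equation*}
\mathcal E:\ Y^2 = X^3 - 3x\,X + 2y
\end{equation*}
over $Z_T$, with $x,y$ the coordinate functions on $Z$ (so $y^2=x^3+a$). A direct computation gives discriminant $-2^6 3^3 a$ and $j$-invariant $j=-1728\,x^3/a$; since $6a$ is invertible on $T$, this defines an elliptic curve $\mathcal E$ over $Z_T$, and for every $P=(x_0,y_0)\in Z(S)$ its fiber $\mathcal E_P$ is an elliptic curve over $T$. The sought map is $\phi\colon P\mapsto[\mathcal E_P]$, a concrete instance of the construction in Lemma \ref{lem:moduli}.

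For (i), I would invoke the classification of $K$-isomorphisms of short Weierstrass equations: two such with coefficients $(A,B),(A',B')$ define $K$-isomorphic elliptic curves iff $(A',B')=(u^4A,u^6B)$ for some $u\in K^\times$. Thus $\phi(x_0,y_0)=\phi(x_0',y_0')$ forces $x_0'=u^4x_0$, $y_0'=u^6y_0$, and substituting into the two copies of $y^2=x^3+a$ (using $a\neq 0$) gives $u^{12}=1$. The homomorphism $\mu_{12}(K)\to\mu_3(K)\times\mu_2(K)$, $u\mapsto(u^4,u^6)$, has kernel $\{\pm 1\}$, so each fiber of $\phi$ has at most $\lvert\mu_{12}(K)\rvert/2\leq 6$ elements. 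Under the hypothesis $\mu_{12}(K)=\{\pm 1\}$ each fiber is a singleton, and $\phi$ is injective.

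For (ii), the conductor bound is immediate: $\mathcal E$ has good reduction at every closed point of $T$ because its discriminant is a unit there, so $f_v(E)=0$ for $v\in T$. For $v\in B-T$ with $v\nmid 6$ use the tame bound $f_v\leq 2$, and for $v\mid 6$ invoke (\ref{eq:cond23}); collecting contributions via $\prod_{v\mid p}N_v^{v(p)}=p^d$ yields $N_E\leq 2^{6d}3^{3d}N_T^2$. For the height, the identity $j=-1728\,x_0^3/a$ gives $3h(P)\leq h(j)+h(a)+\log 1728$; then (\ref{eq:pellarin}) together with $h_F(E)\leq h(E)$ supplies a bound of the form $h(P)\leq 4h(E)+2\log\max(1,h_F(E))+\tfrac{1}{3}h(a)+\mathrm{const}$. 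The looser coefficient $8$ and the term $8\kappa$ in the statement allow for the passage from this Weierstrass equation (integral only over $\mathcal O$) to a minimal model over $\OK$; this amounts to twisting $(x_0,y_0)\mapsto(u^4x_0,u^6y_0)$ by a unit $u\in K^\times$ of controlled height, which requires the same Minkowski-type reduction on the class group of $B$ used in the proof of Proposition \ref{prop:p1par}(ii)---here $\kappa$ encodes the unavoidable loss. I expect this to be the main technical obstacle.

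For (iii), I would sharpen the conductor count at primes $v\nmid 6$ with $v\mid a$. If $v(a)=1$, the Mordell relation $y_0^2=x_0^3+a$ forces $v(x_0)=0$: otherwise $v(y_0^2)=v(x_0^3+a)=\min(3v(x_0),1)=1$, contradicting the parity of $v(y_0^2)$. Hence $c_4=144\,x_0$ is a $v$-unit while $v(\Delta)=1$, so $\mathcal E$ has multiplicative reduction at $v$ and $f_v=1=\min(2,v(a))$; for $v(a)\geq 2$ the general bound $f_v\leq 2=\min(2,v(a))$ suffices. Combining these with the bounds at $v\mid 6$ (at most $2^{8d}3^{5d}$ once the $N_v^2$ factors are absorbed into the $6v(2)$ and $3v(3)$ exponents) and with $f_v\leq 2$ at the remaining closed points of $B-S$ (bounded by $N_S^2$) produces $N_E\leq 2^{8d}3^{5d}N_S^2 r_2(a)$; the additional $D_K$ in the statement reflects the same Minkowski correction as in Proposition \ref{prop:p1par}(ii) when the class group of $B$ is nontrivial.
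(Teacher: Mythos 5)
Your argument is correct and in fact proves a bit more than the stated bounds, so the two sentences of speculative hedging should be cut. The overall shape matches the paper's, but you make three choices that differ, all to your advantage. You use the model $Y^2=X^3-3xX+2y$ (discriminant $-2^6 3^3 a$), which is the quadratic twist by $-3$ of the paper's universal curve $t^2=s^3-27xs-54y$ (discriminant $6^{12}b=-2^6 3^9 a$); since both have $j=-1728x^3/a$ and all local conductor estimates invoked --- (\ref{eq:cond23}) at $v\mid 6$, the tame bound $f_v\le 2$ elsewhere --- are insensitive to this twist, the choice is harmless, and part (i) is identical to the paper's. In (ii) your relation $3h(P)\le h(j)+h(a)+\log 1728$ combined with (\ref{eq:pellarin}) already gives $h(P)\le \tfrac{1}{3}h(a)+4h_F(E)+2\log\max(1,h_F(E))+28$, with coefficient $4$ rather than $8$ and no $\kappa$ at all; contrary to your worry, no passage to a minimal model over $\OK$ is required. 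The $8\kappa$ in the statement is an artifact of the paper's proof, which routes through the quasi-minimal model $W$ of Lemma \ref{lem:wei} and bounds $h(x(P)^3)=h\bigl((b/\Delta)c_4^3\bigr)$ by the wasteful $h(b)+h(c_4^3)+h(\Delta)$ instead of the sharper $h(b)+h(j)$ that you use. Likewise in (iii), your pointwise analysis of the given integral Weierstrass model at $v\nmid 6$ with $v(a)=1$ (parity of $v(y_0^2)$ forces $v(x_0)=0$, hence $v(c_4)=0$ and $v(\Delta)=1$, hence multiplicative reduction and $f_v=1$) is exactly right and yields $N_E\le 2^{8d}3^{5d}N_S^2\,r_2(a)$ with no $D_K$ factor. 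That factor in the statement stems from the paper's use of (\ref{eq:amin}) to control the places where $W$ fails to be minimal, a complication your direct argument simply avoids. Since $\kappa\ge 0$ and $D_K\ge 1$, your bounds imply the stated ones and the proof is complete.
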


To prove Proposition \ref{prop:mpar} we shall use a lemma which relates heights of elliptic curves.
We recall that $E_K$ denotes the generic fiber of an elliptic curve $E$ over $S$. Let  $W$ be a Weierstrass model of $E_K$ over $B$ with discriminant $\Delta_{W}$, 
see for example  \cite[Section 9.4.4]{liu:ag} for a definition of $W$ and $\Delta_{W}$.
To measure $W$ we take the height 
\begin{equation}\label{def:w}
h(W)=\frac{1}{12}\inf_{\epsilon\in\mathcal O_K^\times} h(\epsilon^{12}c_4^3,\epsilon^{12}c_6^2),
\end{equation}
where $c_4$ and $c_6$ are the usual quantities of a defining Weierstrass equation of $W$, see \cite[p.42]{silverman:aoes}. 
It turns out that the definition of $h(W)$ does not depend on the choice of the defining Weierstrass equation of $W$. We obtain the following lemma.

\begin{lemma}\label{lem:wei}
Suppose that $E$ is an elliptic curve over $S$. Then there exists a Weierstrass model $W$ of $E_K$ over $B$ that satisfies $$h(W)\leq h(E)+\frac{1}{2}\log\left(\max(1,h_F(E))\right)+\kappa+7.$$
\end{lemma}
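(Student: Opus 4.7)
The plan is to compare $h(W)$ with the period-integral expression for $h(E)$ and then invoke Silverman-Pellarin to control the residual archimedean correction by $h_F(E)$.

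First I would fix a Weierstrass model $W$ of $E_K$ over $B$ that is globally as close to minimal as possible: since the obstruction to a globally minimal model lies in $\textnormal{Cl}(K)/12\,\textnormal{Cl}(K)$, Minkowski's bound on class representatives yields $\Delta_W\mathcal{O}_K = \mathfrak{D}_E\cdot\mathfrak{a}^{12}$ with $\log N(\mathfrak{a}) \leq \tfrac{1}{2}\log D_K$, which accounts for the $\tfrac{1}{2d}\log D_K$ summand of $\kappa$. Because $c_4,c_6\in\mathcal{O}_K$, the non-archimedean places contribute nothing to $h(c_4^3,c_6^2)$, so the whole problem reduces to the archimedean part.

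Next, for each archimedean $\sigma$ pick periods $\omega_1^\sigma,\omega_2^\sigma$ of $E^\sigma(\mathbb{C})$ with $\tau_\sigma=\omega_2^\sigma/\omega_1^\sigma$ in the standard fundamental domain. The transformation laws of Eisenstein series give $|\sigma(c_4)|^3=1728\,|g_2(\tau_\sigma)|^3|\omega_1^\sigma|^{-12}$ and $|\sigma(c_6)|^2=216^2|g_3(\tau_\sigma)|^2|\omega_1^\sigma|^{-12}$; since $g_2,g_3$ are uniformly bounded on the fundamental domain,
\[\log\max\bigl(1,|\sigma(c_4^3)|,|\sigma(c_6^2)|\bigr)\;\leq\;-12\log|\omega_1^\sigma|+O(1).\]
The twist $\epsilon\in\mathcal{O}_K^\times$ in the definition of $h(W)$ effectively replaces $\omega_1^\sigma$ by $\omega_1^\sigma/\sigma(\epsilon)$ while leaving $\tau_\sigma$ and the finite places fixed. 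By an effective version of Dirichlet's unit theorem (Bugeaud-Gy\H{o}ry-type bounds on the heights of a basis of fundamental units), one may choose $\epsilon$ so that the values $-12\log|\omega_1^\sigma/\sigma(\epsilon)|$ are nearly constant in $\sigma$, with uniform error at most $79\,R_K(r_K!)\,r_K^{3/2}\log d$ — exactly the remaining summand of $\kappa$. Summing yields
\[12\,d\,h(W)\;\leq\;-12\sum_\sigma n_\sigma\log|\omega_1^\sigma|+12\,d\,\kappa+O(d).\]

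Finally, taking $\alpha=dx/(2y+a_1 x+a_3)$ as a global generator of $\omega=e^*\Omega^1$ on the near-minimal $W$ (so $\log|\omega/\alpha\omega|$ is negligible) and computing $\|\alpha_\sigma\|_\sigma^2=|\omega_1^\sigma|^2\,\textnormal{Im}(\tau_\sigma)$, the Faltings-height definition gives
\[-\frac{1}{d}\sum_\sigma n_\sigma\log|\omega_1^\sigma|\;=\;h(E)+\frac{1}{2d}\sum_\sigma n_\sigma\log\textnormal{Im}(\tau_\sigma)+O(1).\]
In the fundamental domain one has $\textnormal{Im}(\tau_\sigma)\leq \tfrac{1}{2\pi}\log^+\!|j(\tau_\sigma)|+O(1)$, so Jensen's inequality combined with Pellarin's estimate (\ref{eq:pellarin}) gives $\tfrac{1}{d}\sum n_\sigma\log\textnormal{Im}(\tau_\sigma)\leq \log\max(1,h_F(E))+O(1)$. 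Assembling the three steps and dividing by $12$ produces the claimed inequality, the factor $\tfrac12$ in front of $\log\max(1,h_F(E))$ coming directly from the $\textnormal{Im}(\tau_\sigma)$-correction in the Faltings identity. The hard part is the Dirichlet-unit step: extracting the specific constant $79\,R_K(r_K!)\,r_K^{3/2}\log d$ requires a careful effective bound on fundamental units of the Bugeaud-Gy\H{o}ry type, and one must then track absolute constants in all auxiliary estimates tightly enough to absorb the residual $O(1)$ into the final $+7$.
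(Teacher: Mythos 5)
Your proposal is correct in spirit but takes a genuinely different route from the paper. The paper never goes through periods or Eisenstein series at all: it uses the algebraic identity $c_6^2=c_4^3-1728\Delta$ and $j=c_4^3/\Delta$ to rewrite $dh(c_4^3,c_6^2)$ archimedeanly as $\log\prod_\sigma\bigl(|\Delta|_\sigma\max(|j|_\sigma,|j-1728|_\sigma,|\Delta|_\sigma^{-1})\bigr)$, splits the product according to $|\Delta|_\sigma^{-1}\gtrless|j|_\sigma+1728$, invokes the Kodaira--N\'eron formula $dh(j)=\log\Delta_2+\sum\log\max(1,|j|_\sigma)$ with the unstable/stable decomposition $\Delta_E=\Delta_1\Delta_2$, and then applies \cite[Lemma 3]{gyyu:sunits} to bound $h(\Delta_W)$ by $\log\Delta_E/d+12\kappa$. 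Your approach replaces all of this middle machinery with the lattice description: you bound $|c_4|_\sigma^3,|c_6|_\sigma^2$ via $|\omega_1^\sigma|^{-12}$ and bounded Eisenstein series, re-interpret the Faltings height as $-\frac1d\sum\log|\omega_1^\sigma|$ minus the $\frac12\log\textnormal{Im}(\tau_\sigma)$ correction, and convert the $\textnormal{Im}(\tau_\sigma)$-average into $\log\max(1,h_F(E))$ by Jensen plus Pellarin. Both proofs ultimately rest on the same three hard inputs (Minkowski for quasi-minimality, an effective unit-regulator bound, and Pellarin's version of Silverman), but your route buys a more transparent geometric picture of why the $\frac12\log\max(1,h_F)$ term appears (it \emph{is} the $\textnormal{Im}(\tau)$ correction), while the paper's $j$-and-$\Delta$ route makes the constant-tracking a matter of bookkeeping with explicit inequalities.

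Two things deserve more care before this can be accepted. First, the step from $\log\max(1,|\sigma(c_4^3)|,|\sigma(c_6^2)|)\le\max\bigl(0,-12\log|\omega_1^\sigma/\sigma(\epsilon)|+O(1)\bigr)$ to the displayed bound $12dh(W)\le-12\sum\log|\omega_1^\sigma|+12d\kappa+O(d)$ silently drops the outer $\max(0,\cdot)$; this is only safe because the mean $-\frac{12}{d}\sum\log|\omega_1^\sigma|$ is bounded below by an absolute constant (via Bost's lower bound for $h_F$ and $\textnormal{Im}(\tau_\sigma)\ge\sqrt3/2$) and because $h(W)\ge0$ anyway, but that has to be said — otherwise the ``nearly constant'' step doesn't manifestly give the claimed inequality. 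Second, your penultimate display is really a one-sided inequality, not an identity: the omitted term is $-\frac1d\log|\omega/\alpha_W\omega|=-\frac{1}{12d}\log\bigl(N(\Delta_W)/\Delta_E\bigr)$, which is nonpositive (so harmless in the direction you need) but is certainly not $O(1)$; it is precisely the quantity that Minkowski bounds by $\frac{1}{2d}\log D_K$, so saying it is ``negligible'' while also crediting it to the $\kappa$-summand double-counts the Minkowski step. Finally, you rightly flag that pinning down $79R_K(r_K!)r_K^{3/2}\log d$ and absorbing all $O(1)$'s into $+7$ is the real work; the paper does this by quoting \cite[Lemma 3]{gyyu:sunits} verbatim (which already has exactly that form, with the $-6\log D_K$ built in to cancel the Minkowski $D_K^6$), rather than re-deriving it from fundamental-unit bounds.
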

If $K=\QQ$, then this lemma would follow on calculating the constants in Silverman's \cite[Proposition 2.1, Corollary 2.3]{silverman:arithgeo}.
However, the proof of \cite[Corollary 2.3]{silverman:arithgeo} does not generalize directly to arbitrary $K$, 
since it uses that the ring of integers of $\QQ$ has class number one and unit group $\{\pm 1\}$.
To deal with arbitrary $K$ we apply a classical theorem of Minkowski and a result which is based on estimates for certain fundamental units of $\OK$. 
This leads to a dependence of the constant $\kappa$ on $D_K$, $d$ and on $R_K$, $r_K$, $d$. 
\begin{proof}[Proof of Lemma \ref{lem:wei}]
On combining \cite[p.264]{silverman:aoes}  with a classical result of Minkowski, we obtain a Weierstrass model $W$ of $E_K$ over $B$ of discriminant $\Delta_W$  such that
\begin{equation}\label{eq:amin}
\Delta_{W}\OK=\mathfrak a^{12}\mathfrak D
\end{equation}
for $\mathfrak D$ the minimal discriminant ideal of $E_K$ and $\mathfrak a\subseteq \OK$ an  ideal with $\N(\mathfrak a)^2\leq D_K$. 
For any nonzero $\beta\in \OK$, an application of \cite[Lemma 3]{gyyu:sunits}\footnote{This result relies on estimates for certain fundamental units of $\OK$.} with $n=12$ gives $\epsilon\in\mathcal O_K^\times$ such that $dh(\epsilon^{12}\beta)\leq\log\N(\beta)+12d\kappa-6\log(D_K)$. Hence, on using  (\ref{eq:amin}), we obtain a defining Weierstrass equation of $W$, with quantities $c_4,c_6$ and discriminant $\Delta$, such that
\begin{equation}\label{eq:discup}
dh(\Delta)\leq \log \Delta_E+12d\kappa.
\end{equation}
We write $\Delta_E=\Delta_1\Delta_2$ with $\Delta_1=\exp\bigl(12d(h(E)-h_F(E))\bigl)$  the ``unstable discriminant''  and $\Delta_2=\Delta_E\Delta_1^{-1}$  the $d$-th power of the ``stable discriminant''. 
Let $j$ be the $j$-invariant of $E_K$. Since $c_4,c_6\in\OK$ satisfy $c_6^2=c_4^3-1728\Delta$ and $j=c_4^3/\Delta$, we see
\begin{equation*}
dh(c_4^3,c_6^2)=\log\prod\left( \left|\Delta\right|_\sigma  \max(\left|j\right|_\sigma,\left|j-1728\right|_\sigma,\left|\Delta\right|^{-1}_\sigma)\right),
\end{equation*}
and  Kodaira-N\'eron \cite[p.200]{silverman:aoes} gives $dh(j)=\log \Delta_2+\sum \max(1,\left|j\right|_\sigma)$. 
Here the product and the sum are both taken over all embeddings $\sigma:K\hookrightarrow\mathbb C$ and $\left|\beta\right|_\sigma$ denotes the complex absolute value of $\sigma(\beta)$ for $\beta\in K$.
Then, on splitting the product according to $\left|\Delta\right|^{-1}_\sigma>\left|j\right|_\sigma+1728$ and $\left|\Delta\right|^{-1}_\sigma\leq\left|j\right|_\sigma+1728$,
we deduce from (\ref{eq:discup}) the estimate $$h(c_4^3,c_6^2)\leq \log\Delta_1/d+h(j)+12\kappa+\log(2\cdot1728).$$
Hence, on combining $12h(W)\leq h(c_4^3,c_6^3)$,  (\ref{eq:pellarin}) and $\log\Delta_1+12dh_F(E)=12dh(E)$, we see that  $W$  has the desired property. This completes the proof of Lemma \ref{lem:wei}.
\end{proof}
The proof shows in addition that one can take in Lemma \ref{lem:wei} any Weierstrass model $W$ of $E_K$ over $B$ with $\N(\Delta_{W})\leq D_K^6\Delta_E$. A defining Weierstrass equation of such a $W$ is  called a quasi-minimal Weierstrass equation of $E_K$, see \cite[p.264]{silverman:aoes}.

\begin{proof}[Proof of Proposition \ref{prop:mpar}]
If $Z(S)$ is empty, then all statements are trivial. Hence we may and do assume that $Z(S)$ is not empty. We write $b=-a/1728$. Let $Y$ be the spectrum of $\ZZ[1/6,c_4,c_6,b,1/b]/(1728b-c_4^3+c_6^2)$ for $c_4$ and $c_6$ ``indeterminates''. We observe that
$$t^2=s^3-27c_4s-54c_6$$
defines  an (universal) elliptic curve $\mathcal E$ over $Y$. We take $P\in Z(S)$.  
On using that $Y_T\cong Z_T$, 
we obtain a morphism $T\to Y_T$ induced by $P$. We denote by $E$ the fiber product of $\mathcal E_{Y_T}\to Y_T$ with this morphism  $T\to Y_T$. It follows that $E$ is an elliptic curve over $T$ 
and then 
$P\mapsto [E]$ defines a map $\phi:Z(S)\to \el(T)$.

To prove (i) we observe that $E$ is a Weierstrass model of its generic fiber $E_K$.
Hence we see that if $P'\in Z(S)$ satisfies $\phi(P')=\phi(P)$, then there is $u\in K$ with $u^{4}x(P')=x(P)$ and $u^{6}y(P')=y(P)$, and thus $u^{12}a=a$ since $P,P'\in Z(S)$. Therefore we deduce (i). 

We now show (ii). Let $W$ be the Weierstrass model of $E_K$ over $B$ from Lemma \ref{lem:wei}. We denote by $\Delta,c_4,c_6$ the quantities of a defining Weierstrass equation of $W$, which we constructed in the proof of Lemma \ref{lem:wei}. We point out that one should not confuse these $c_4,c_6\in \OK$ with the ``indeterminates'' which appear in the proof of (i).
On using that $E$ is a Weierstrass model of $E_K$ over $T$, we see that there exists $u\in K$ that satisfies
\begin{equation*}
b=u^{12}\Delta, \ \ \ x(P)=u^{4}c_4. 
\end{equation*}
Thus Lemma \ref{lem:wei}, (\ref{eq:discup}) and Lemma \ref{lem:dehe} lead to an upper bound for $h(P)$ as stated in (ii).
To estimate the conductor $N_E$ of $E$ we take a closed point $v$ of $B$. Let $f_v$ be the conductor exponent of $E_K$ at $v$. 
If $v\in T$, then $f_v=0$ since $E$ is a smooth projective model of $E_K$ over $T$,
and if $v\nmid 6$, then $f_v\leq 2$. Thus (\ref{eq:cond23}) implies an estimate for $N_E$ as claimed in (ii).

To prove (iii) we may and do assume that $T=\sp(\mathcal O[1/(6a)])$. Let $U$ (resp. $U'$) be the set of points $v\in S-T$ with $v\nmid 6$ such that $E_K$ has (resp. has not) semi-stable reduction at $v$. We define $\Omega=\prod_{v\in U}N_v$ and $\Omega'=\prod_{v\in U'}N_v^2$ and then we deduce
$$
N_E\leq 2^{8d}3^{5d}N_S^2\cdot\Omega\cdot\Omega'.
$$
To control the unstable part $\Omega'$ we may and do assume that $U'$ is not empty. We take $v\in U'$. The classification of Kodaira-N\'eron \cite[p.448]{silverman:aoes} gives $v(\Delta)\geq 2$. 
Since $P\in Z(S)$ we see that $E$ extends to a Weierstrass model of $E_K$ over $S$, with discriminant $6^{12}b$. 
Hence, if $W$ is minimal at $v$, then $2\leq v(\Delta)\leq v(b)=v(a).$ Further, (\ref{eq:amin}) implies $\prod N_v^2\leq D_K$ with the product taken over all closed points $v\in B$ with $W$  not minimal at $v$. We conclude
$$\Omega'\leq D_K\prod N_v^2$$
with the product taken over all $v\in U'$ such that $v(a)\geq 2$. To estimate the stable part $\Omega$ we use our assumption that $T=\sp(\mathcal O[1/(6a)])$. 
This assumption implies that any $v\in S-T$ with $v\nmid 6$ satisfies $v(a)\geq 1$.
Therefore we obtain $$\Omega\leq \prod N_v$$ with the product taken over all $v\in U$ such that $v(a)\geq 1$. 
On combining the displayed inequalities, we deduce (iii). This completes the proof of Proposition \ref{prop:mpar}.\end{proof}

We conclude this section with the following remarks.
The elliptic curve $\mathcal E$ over $Y$, 
which appears in the proof of Proposition \ref{prop:mpar}, represents a moduli problem $[\Delta=b]$ on $(Ell)$. Here the moduli problem $[\Delta=b]$ is defined similarly as $[\Delta=1]$ in \cite[p.70]{kama:moduli}, but with 1 replaced by the number $b$ which appears in the proof of Proposition \ref{prop:mpar}.

The above propositions show that to solve $S$-unit and Mordell equations, it suffices to estimate effectively $h(E)$ in terms of $N_E$ for any elliptic curve $E$ over $K$. In this paper we shall prove such estimates for $K=\QQ$, see \cite{rvk:height} for arbitrary number fields $K$.

In the special case of $S$-unit and Mordell equations, it is possible to give ad hoc Par{\v{s}}in constructions  which do not use the moduli problem formalism. For example, ``Frey-Hellegoarch curves'' provide in principle such a construction for $S$-unit equations. However, using the moduli problem formalism gives more conceptual constructions, which generalize several known examples such as  ``Frey-Hellegoarch curves''.

\section{Variation of Faltings heights under isogenies}\label{sec:var}

In this section, we collect results which control the variation of  Faltings heights under isogenies. These results are rather direct consequences of theorems in the literature.

Let $K$ be a number field and let $A$ be an abelian variety over $K$ of dimension $g\geq 1$.  We denote by $h_F(A)$ the stable Faltings height of $A$ and by $h(A)$ the relative Faltings height of $A$, see Section \ref{sec:heights} for the definitions.  The results of Faltings \cite[Lemma 5]{faltings:finiteness} and  Raynaud \cite[Corollaire 2.1.4]{raynaud:abelianisogenies} provide that any $K$-isogeny $\varphi:A\to A'$ of abelian varieties over $K$ satisfies
\begin{equation}\label{eq:hdegdiff}
\lvert h(A)-h(A')\rvert\leq \frac{1}{2}\log \deg(\varphi).
\end{equation}
Let $N_A$ be the conductor of $A$  defined in Section \ref{sec:cond}, let $D_K$ be the absolute value of the discriminant of $K$ over $\QQ$ and let $d=[K:\QQ]$ be the degree of $K$ over $\QQ$.

\begin{lemma}\label{lem:abisogvar}
Suppose $A'$ is an abelian variety defined over $K$ which is $K$-isogenous to $A$.  Then the following statements hold. 
\begin{itemize}
\item[(i)] There exists an effective constant $\mu$, depending only on $g,N_A,d$ and $D_K$, such that $\lvert h_F(A)-h_F(A')\rvert\leq \mu.$
\item[(ii)] If $K=\QQ$ and $g=1$, then $\lvert h(A)-h(A')\rvert\leq \frac{1}{2}\log 163$.
\item [(iii)] Suppose $K=\QQ$ and  $A$ is semi-stable. Then any abelian subvariety $C$ of $A$ satisfies $h(C)\leq h(A)+\frac{g}{2} \log(8\pi^2).$
\end{itemize}
\end{lemma}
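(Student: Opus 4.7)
The plan is to handle the three parts using the Faltings--Raynaud inequality (\ref{eq:hdegdiff}) combined with appropriate bounds on isogeny degrees.

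For (i), I would invoke an effective isogeny estimate: any two $K$-isogenous abelian varieties $A, A'$ over $K$ are connected by a $K$-isogeny $\varphi$ of degree effectively bounded in terms of $g, N_A, d, D_K$. Such an estimate is provided by Faltings' method from \cite{faltings:finiteness}, or alternatively by the transcendence-theoretic isogeny theorem of Masser--W\"ustholz. Passing to a finite extension $L/K$ over which both $A$ and $A'$ acquire semi-stable reduction, $\varphi$ extends to $A_L \to A'_L$ of the same degree, and (\ref{eq:hdegdiff}) applied over $L$ yields
\[
\lvert h_F(A) - h_F(A') \rvert = \lvert h(A_L) - h(A'_L) \rvert \leq \tfrac{1}{2}\log\deg(\varphi),
\]
which is bounded by an effective constant $\mu = \mu(g, N_A, d, D_K)$.

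For (ii), over $K = \QQ$ the group $\textnormal{Hom}_\QQ(A, A')$ is a rank-one $\ZZ$-module (the thirteen $j$-invariants with complex multiplication occurring over $\QQ$ pose no issue, since the quadratic CM endomorphisms there are not $\QQ$-rational). Let $\varphi_0$ generate this module. If $\ker(\varphi_0) \cong \ZZ/m \oplus \ZZ/n$ with $m \mid n$ and $m \geq 2$, then $A[m] \subseteq \ker(\varphi_0)$, so $\varphi_0 = \psi \circ [m]_A$ for some $\QQ$-isogeny $\psi: A \to A'$ of degree $\deg(\varphi_0)/m^2$, contradicting the minimality of $\varphi_0$. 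Hence $\ker(\varphi_0)$ is cyclic, and Kenku's classification \cite{kenku:ellisogenies} of rational cyclic isogenies of elliptic curves over $\QQ$ bounds its order by $163$. Applying (\ref{eq:hdegdiff}) to $\varphi_0$ gives the claimed inequality.

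For (iii), the semi-stability of $A$ over $\QQ$ ensures that the N\'eron model $\mathcal{A}/\ZZ$ is semi-abelian. The scheme-theoretic closure $\mathcal{C}$ of $C$ in $\mathcal{A}$ is then a closed semi-abelian subscheme (namely the N\'eron model of $C$), and $\mathcal{A}/\mathcal{C}$ is the semi-abelian N\'eron model of the semi-stable quotient $A/C$. The cotangent sequence at the identity section yields a canonical isomorphism of line bundles on $\textnormal{Spec}(\ZZ)$,
\[
\omega_{\mathcal{A}} \cong \omega_{\mathcal{C}} \otimes \omega_{\mathcal{A}/\mathcal{C}}.
\]
For the archimedean contribution, the orthogonal decomposition $V = V_C \oplus V_C^{\perp}$ of $V = \textnormal{Lie}\,A(\CC)$ with respect to its flat hermitian metric, combined with a Fubini computation on $A(\CC) = V/\Lambda$, splits the $L^2$-norm of a top form into the product of the $L^2$-norms on $C(\CC)$ and $(A/C)(\CC)$, up to an explicit bounded factor. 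Combining this with Bost's lower bound (\ref{eq:bost}) applied to the semi-stable quotient $A/C$, which gives $-h(A/C) \leq \tfrac{g - \dim C}{2}\log(2\pi^2) \leq \tfrac{g}{2}\log(2\pi^2)$, yields the stated inequality. The main obstacle is the archimedean comparison, which must be made precise enough to absorb any discrepancy into the slack $\tfrac{g}{2}\log 4$ between $\tfrac{g}{2}\log(2\pi^2)$ and $\tfrac{g}{2}\log(8\pi^2)$.
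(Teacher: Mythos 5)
Your argument for (ii) is essentially the paper's: the $\QQ$-isogeny of minimal degree among all $\QQ$-isogenies $A\to A'$ is cyclic, Kenku bounds its degree by $163$, and (\ref{eq:hdegdiff}) finishes.

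For (i) there is a genuine gap. You assert a $K$-isogeny $\varphi\colon A\to A'$ of degree effectively bounded in terms of $g,N_A,d,D_K$, attributing this to Faltings' method or Masser--W\"ustholz, but neither gives a statement of that form. Masser--W\"ustholz (and Gaudron--R\'emond, cf.\ (\ref{eq:garehdiff})) bound the minimal degree in terms of $g$, $d$ and $h_F(A)$, not $N_A$; converting $h_F(A)$ into a function of $N_A$ would require a height--conductor inequality such as Theorem \ref{thm:gl2}, which itself relies on Lemma \ref{lem:abisogvar}~(i), so this route is circular. Faltings' method, in the effective form of Par{\v{s}}in--Zarhin--Raynaud \cite{raynaud:abelianisogenies}, does not produce a degree bound either: Raynaud's Th\'eor\`eme~4.4.9 bounds the height variation $\lvert h_F(A_L)-h_F(A'_L)\rvert$ directly in the semi-stable case, bypassing the isogeny degree altogether. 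The paper's proof uses exactly this: it passes to $L=K(A[15])$, which is semi-stable by \cite{grra:neronmodels}, applies Raynaud's theorem, and then controls $D_L$, $[L:K]$, the number of bad places and the auxiliary small primes effectively in terms of $g,N_A,d,D_K$ via N\'eron--Ogg--Shafarevich, Dedekind's discriminant theorem, and the explicit prime number theorem. Your degree-bounding step should be replaced by a direct appeal to Raynaud's height-variation theorem together with this reduction to the semi-stable case.

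For (iii), the shape of your argument --- a line-bundle comparison $\omega_{\mathcal A}\cong\omega_{\mathcal C}\otimes\omega_{\mathcal D}$, an archimedean estimate, and Bost's lower bound applied to $D=A/C$ --- is right, but you leave the archimedean comparison unfinished, and it is not a minor detail. That comparison is exactly Ullmo--Raynaud \cite[Proposition~3.3]{ullmo:j0n}, which the paper cites and which gives $h(C)\leq h(A)-h(D)+g\log 2$ for semi-stable $A$ over $\QQ$; combined with $-h(D)\leq\frac{g}{2}\log(2\pi^2)$ from (\ref{eq:bost}) this yields $h(C)\leq h(A)+\frac{g}{2}\log(8\pi^2)$. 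Without citing (or reproving) the Ullmo--Raynaud comparison, (iii) is not established.
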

The main ingredients for the proof of this lemma are as follows. Raynaud \cite{raynaud:abelianisogenies}  proved Lemma \ref{lem:abisogvar} (i) for semi-stable abelian varieties. His proof relies on refinements of certain arguments in Faltings \cite{faltings:finiteness}; these refinements are due to Par{\v{s}}in and Zarhin.
To prove (i) we reduce the problem to the semi-stable case established in \cite{raynaud:abelianisogenies}. For this reduction we use the semi-stability  criterion of Grothendieck-Raynaud \cite{grra:neronmodels}, the criterion of N\'eron-Ogg-Shafarevich \cite{seta:goodreduction} and Dedekind's discriminant theorem. 
To show (ii) we combine the inequality (\ref{eq:hdegdiff}) with Mazur's \cite{mazur:qisogenies}  classification of cyclic $\QQ$-isogenies of elliptic curves over $\QQ$, see also Kenku \cite{kenku:ellisogenies}. We deduce (iii) from  Bost's explicit lower bound for $h_F$ in (\ref{eq:bost}) and a result of Ullmo-Raynaud given in \cite[Proposition 3.3]{ullmo:j0n}.

\begin{proof}[Proof of Lemma \ref{lem:abisogvar}]
To prove (i) we let $L=K(A[15])$ be the field of definition of the $15$-torsion points of $A$. The semi-stable reduction criterion \cite[Proposition 4.7]{grra:neronmodels} 
shows that $A_L$ is semi-stable. Let $D_L$ be the absolute value of the discriminant of $L$ over $\QQ$ and let $l$ be the relative degree of $L$ over $K$. We denote by $\mathcal T$ the set of finite places of $L$ where $A_L$ has bad reduction. Let $\ell$ and $\ell'$ be the smallest rational primes such that any place in $T$ has residue characteristic different to $\ell$ and $\ell'$.
An application of \cite[Th\'eor\`eme 4.4.9]{raynaud:abelianisogenies} with the $L$-isogenous abelian varieties $A_L$ and $A'_L$ implies 
\begin{equation}\label{eq:raynaud}
\lvert h_F(A_L)-h_F(A'_L)\rvert\leq \mu'
\end{equation}
for $\mu'$ an effective constant depending only on $D_L,l,d,\lvert \mathcal T\rvert,\ell,\ell'$ and $g$. We now estimate these quantities effectively in terms of $g,N_A,d$ and $D_K$.
The criterion of N\'eron-Ogg-Shafarevich \cite[Theorem 1]{seta:goodreduction} implies that $L=K(A[15])$ is unramified over all finite places $v$ of $K$ such that $v\nmid 15$ and such that $A$ has good reduction at $v$. Thus \cite[Lemma 6.2]{rvk:szpiro}, which is based on Dedekind's discriminant theorem, gives 
$$
D_L\leq (D_KN_A)^{l}(15l^{t+2d})^{ld}
$$
for $t$  the number of finite places of $K$ where  $A$ has bad reduction. 
It holds $\lvert \mathcal T\rvert\leq lt$, and  it is known that $l$ can be explicitly controlled in terms of $g$ (see \cite{grra:neronmodels}). Further, the explicit prime number theorem in \cite{rosc:formulas} gives effective upper bounds for $t,\ell$ and $\ell'$ in terms of $N_A$.  
We conclude that $\mu'$ is bounded from above by an effective constant $\mu$ which depends only on $g,N_A,d$ and $D_K$. Then (\ref{eq:raynaud}) and the stability of $h_F$  prove (i).

To show (ii) we assume that  $K=\QQ$ and $g=1$. Let $\varphi:A\to A'$ be  a $\QQ$-isogeny of minimal degree among all $\QQ$-isogenies  $A\to A'$. This isogeny $\varphi$ is cyclic, since otherwise it factors through multiplication by an integer which contradicts the minimality of $\deg(\varphi)$.
Therefore \cite[Theorem 1]{kenku:ellisogenies} gives $\deg(\varphi)\leq 163$ and then (\ref{eq:hdegdiff}) implies (ii).

To prove (iii) we assume that $K=\QQ$ and that $A$ is semi-stable. Let $C$ be an abelian subvariety of $A$. Then there exists a short exact sequence  $$0\to C\to A\to D\to 0$$ of abelian varieties over $\QQ$. 
The semi-stability of $A$ provides that $C$ and $D$ are semi-stable as well, see for example \cite[p.182]{bolura:neronmodels}.
Therefore \cite[Proposition 3.3]{ullmo:j0n} implies that $h(C)\leq h(A)-h(D)+g\log 2$ and  then the   lower bound  for $h(D)$ given in (\ref{eq:bost}) leads to statement (iii). This completes the proof of Lemma \ref{lem:abisogvar}. 
\end{proof}

We point out that Faltings' proof of the Tate conjecture, and its refinement due to Par{\v{s}}in-Zarhin-Raynaud \cite{raynaud:abelianisogenies} which is applied in Lemma \ref{lem:abisogvar} (i), 
both do not use Diophantine approximation or transcendence techniques. On the other hand,
Masser-W\"ustholz \cite{mawu:abelianisogenies} gave a new proof of the Tate conjecture on using their isogeny estimates which rely on transcendence theory. Bost-David (see for example \cite[p.121]{bost:abelianisogenies}) showed that these isogeny estimates are effective, and completely explicit constants are given by Gaudron-R\'emond  \cite{gare:isogenies}. For example, if  $A'$ is an abelian variety over $K$ which is $K$-isogenous to $A$, then \cite[Th\'eor\`eme 1.4]{gare:isogenies}  combined with  (\ref{eq:hdegdiff}) gives  
\begin{equation}\label{eq:garehdiff}
\lvert h(A)-h(A')\rvert\leq 2^{10}g^3\log \bigl((14g)^{64g^2}d\max(h_F(A),\log d,1)^2\bigl).
\end{equation}
We remark that on calculating the constant $\mu$ in Lemma \ref{lem:abisogvar} (i) explicitly, it turns out that  Lemma \ref{lem:abisogvar} (i) improves (\ref{eq:garehdiff}) in some cases, and vice versa in other cases.

\section{Modular forms and modular curves}\label{sec:modular} 
In the first part of this section, we collect results from the theory of cusp forms. In the second part, we work out an explicit upper bound for the modular degree of newforms with rational Fourier coefficients. In the third part, we give explicit upper bounds for the stable Faltings heights of the Jacobians of certain classical modular curves.

\subsection{Cusp forms}\label{sec:cuspforms}
We begin to collect results for cusp forms which are given, for example, in the books of Shimura \cite{shimura:automorphic} or Diamond-Shurman \cite{dish:modular}. We take an integer $N\geq 1$ and we consider the classical congruence subgroup $\Gamma_0(N)\subset \textnormal{SL}_2(\ZZ)$. Let $S_2(\Gamma_0(N))$  be the complex vector space of cusp forms of weight 2 with respect to $\Gamma_0(N)$. 
We denote by  $X_0(N)$ and $X(1)$ smooth, projective and geometrically connected models over $\QQ$ of the modular curves associated to $\Gamma_0(N)$ and $\textnormal{SL}_2(\ZZ)$ respectively. Throughout Section \ref{sec:modular} we denote by $d$ the degree of the natural projection  $X_0(N)\to X(1)$. 
The dimension of $S_2(\Gamma_0(N))$  coincides with the genus $g$ of $X_0(N)$. 
Furthermore, it holds 
\begin{equation}\label{eq:destimate}
g\leq d/12 \ \ \ \textnormal{ and } \ \ \  d=
N\prod(1+1/p)  \textnormal{ if } N\geq 2
\end{equation}
with the product taken over all rational primes $p$ which divide $N$. 
Let $f\in S_2(\Gamma_0(N))$ be a nonzero cusp form. If  $\textnormal{div}(f)$ denotes the usual rational divisor on $X_0(N)_\CC$ of $f$, then 
\begin{equation}\label{eq:rr}
\deg(\textnormal{div}(f))=d/6
\end{equation}
for $\deg(\textnormal{div}(f))\in\QQ$ the degree of $\textnormal{div}(f)$.
For any integer $n\geq 1$, we denote by $a_n(f)$ the $n$-th Fourier coefficient of $f$. We say that $f$ is normalized if $a_1(f)=1$.

We next review properties of the basis of $S_2(\Gamma_0(N))$ constructed by Atkin-Lehner in  \cite[Theorem 5]{atle:hecke}. 
Let $S_2(\Gamma_0(N))^{\textnormal{new}}$ be the new subspace of $S_2(\Gamma_0(N))$ and let  $S_2(\Gamma_0(N))^{\textnormal{old}}$ be the old subspace of $S_2(\Gamma_0(N))$. 
There is a decomposition
$$S_2(\Gamma_0(N))=S_2(\Gamma_0(N))^{\textnormal{new}}\oplus S_2(\Gamma_0(N))^{\textnormal{old}}$$ 
which is orthogonal with respect to the Petersson inner product $(\cdot \,,\cdot)$ on $S_2(\Gamma_0(N))$.  
We say that $f$ is a newform of level $N$ if $f\in S_2(\Gamma_0(N))^{\textnormal{new}}$ is  normalized and if $f$ is an eigenform for all Hecke operators on $S_2(\Gamma_0(N))$. 
The set $\mathcal B^\textnormal{new}$ of newforms of level $N$ is an orthogonal basis of $S_2(\Gamma_0(N))^{\textnormal{new}}$ with respect to $(\cdot \, ,\cdot)$. 
Moreover, there exists a basis $\mathcal B^{\textnormal{old}}$ of $S_2(\Gamma_0(N))^{\textnormal{old}}$ with the property that any $f\in \mathcal B^{\textnormal{old}}$ takes the form
\begin{equation}\label{eq:oldbasis}
f(\tau)=f_M(m\tau), \ \ \tau\in \mathbb C, \ \textnormal{im}(\tau)>0
\end{equation}
with $M\in\ZZ_{\geq 1}$ a proper divisor of $N$, with $m\in \ZZ_{\geq 1}$ a divisor of $N/M$  and with $f_M$ a newform of level $M$. Conversely, any $f\in S_2(\Gamma_0(N))$ which is of the form (\ref{eq:oldbasis}) is in $\mathcal B^{\textnormal{old}}$.
We say that $\mathcal B=\mathcal B^{\textnormal{new}}\cup\mathcal B^{\textnormal{old}}$ is the Atkin-Lehner basis for $S_2(\Gamma_0(N))$. 

\subsection{Modular degree}\label{sec:modulardegree}

Let $f\in S_2(\Gamma_0(N))$ be a newform of level $N\geq 1$, with all Fourier coefficients rational integers. In this section, we estimate the modular degree of $f$ in terms of $N$. 

We begin with the definition of the modular degree $m_f$ of $f$.
Let $J_0(N)=\textnormal{Pic}^0(X_0(N))$ be the Jacobian variety of $X_0(N)$. We denote by $\mathbb T_\ZZ$ the subring of the endomorphism ring of $J_0(N)$, which is generated over $\ZZ$ by the usual Hecke operators $T_n$ for all $n\in \ZZ_{\geq 1}$. 
Let $I_f$ be the kernel of the ring homomorphism $\mathbb T_\ZZ\to \ZZ[\{a_n(f)\}]$ which is induced by $T_n\mapsto a_n(f)$.
The image $I_fJ_0(N)$ of $J_0(N)$ under $I_f$ is connected and the quotient 
\begin{equation}\label{def:ef}
E_f=J_0(N)/I_fJ_0(N)
\end{equation}
is an abelian variety over $\QQ$ of dimension $[\QQ(\{a_n(f)\}):\QQ]=1$. 
The cusp $\infty$ of $X_0(N)$ is a $\QQ$-rational point of $X_0(N)$. We denote by $\iota:X_0(N)\hookrightarrow J_0(N)$ the usual embedding over $\QQ$, which maps the cusp $\infty$ to the zero element of $J_0(N)$.
On composing the embedding $\iota$ with the natural projection $J_0(N)\to J_0(N)/I_fJ_0(N)$, we obtain a finite morphism $$\varphi_f: X_0(N)\to E_f.$$ 
The modular degree $m_f$ of $f$ is defined as the degree of the finite morphism $\varphi_f$. 

To estimate $m_f$ we shall use properties of the congruence number $r_f$ of $f$. We recall that $r_f$ is the largest integer such that there exists a cusp form  $f_c\in S_2(\Gamma_0(N))$, with rational integer Fourier coefficients, which satisfies
\begin{equation}\label{eq:congruencedef}
(f,f_c)=0 \ \textnormal{ and } \ a_n(f)\equiv a_n(f_c)\bmod (r_f), \ n\geq 1.
\end{equation}
It is known that the modular degree $m_f$ and the congruence number $r_f$ are related. For example, the arguments in Zagier's article \cite[Section 5]{zagier:modularparam} 
give
\begin{equation}\label{eq:div}
m_f\mid r_f.
\end{equation}
We note that Zagier's arguments are based on ideas of Ribet, see \cite{ribet:congruences} and the references therein. In fact Zagier \cite[p.381]{zagier:modularparam}  attributes the divisibility result (\ref{eq:div}) to Ribet. Further, we mention that  Cojocaru-Kani \cite[Theorem 1.1]{coka:modulardegree} gave a detailed exposition of a proof of (\ref{eq:div}) and Agashe-Ribet-Stein   generalized (\ref{eq:div}) in \cite[Theorem 3.6]{agrist:congruence}.

For any real number $r$, we define $\lfloor r\rfloor=\max(m\in \ZZ, m\leq r)$, and for any integer $n$, we denote by $\tau(n)$ the number of positive integers which divide $n$. The author is grateful to Richard Taylor for proposing a first strategy to prove an upper bound for $m_f$.  

\begin{lemma}\label{lem:moddeg}
Let $N\geq 1$ be an integer. Suppose $f\in S_2(\Gamma_0(N))$ is a newform of level $N$, with all Fourier coefficients rational integers. Then the following statements hold.
\begin{itemize}
\item[(i)] The modular degree $m_f$ of $f$ satisfies $\log m_f\leq \frac{1}{2}N(\log N)^2.$
\item[(ii)] More precisely, let $g$ be the genus of $X_0(N)$ and let $d$ be the degree of the natural projection $X_0(N)\to X(1)$. Then there exists a subset $J\subset \{1,\dotsc,\lfloor d/6+1\rfloor\}$ of cardinality $g$, which is independent of $f$, such that $m_f\leq g!\prod_{j\in J}\tau(j)j^{1/2}.$
\end{itemize}
\end{lemma}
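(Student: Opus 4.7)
The plan is to invoke the divisibility $m_f \mid r_f$ from (\ref{eq:div}) and bound the congruence number $r_f$ instead. By (\ref{eq:rr}), any nonzero $h \in S_2(\Gamma_0(N))$ has order of vanishing at the cusp $\infty$ at most $d/6$, so the evaluation map $h \mapsto (a_n(h))_{1 \leq n \leq \lfloor d/6 \rfloor + 1}$ is injective. Since $\dim S_2(\Gamma_0(N)) = g$, I extract a subset $J \subset \{1, \ldots, \lfloor d/6 + 1 \rfloor\}$ of cardinality $g$ on which the Atkin--Lehner basis $\mathcal{B} = \{f = f_1, \ldots, f_g\}$ projects to a $\CC$-basis of $\CC^J$. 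This choice of $J$ depends only on $N$ and $\mathcal B$, not on which member of $\mathcal B$ plays the role of $f$.

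The heart of the proof is relating $r_f$ to the determinant of the $g\times g$ matrix $A = \bigl(a_j(f_i)\bigr)_{j \in J,\, 1 \le i \le g}$ of algebraic integers. Let $L \subset S_2(\Gamma_0(N))$ be the lattice of cusp forms with integer Fourier coefficients, and for $h \in L$ write $h = \sum_i c_i(h) f_i$. By orthogonality of $\mathcal B$ for the Petersson pairing, the projection $\pi(h)$ onto $\CC f$ equals $c_1(h) f$; since $f$ is a Hecke eigenform with rational eigenvalues, the corresponding idempotent lies in $\mathbb T_\ZZ \otimes \QQ$, forcing $c_1(h) \in \QQ$. Cramer's rule applied to $(a_j(h))_j = A\,(c_i(h))_i$ expresses $c_1(h)$ as a $\ZZ$-linear combination of cofactors of $A$, divided by $\det A$. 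Galois invariance of $(\det A)^2$ (the Galois action on newforms permutes $\mathcal B$, changing $\det A$ at most by a sign) together with $c_1(h) \in \QQ$ then gives $c_1(L) \subset \frac{1}{|\det A|}\ZZ$. Unwinding (\ref{eq:congruencedef}) identifies $r_f$ as the largest integer $r$ with $f/r \in \pi(L)$, i.e.\ as the denominator of $c_1(L)$; hence $r_f \leq |\det A|$.

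Finally, the Deligne (Ramanujan--Petersson) inequality $|a_n(f_i)| \leq \tau(n) n^{1/2}$ applies to every newform in $\mathcal B^{\textnormal{new}}$ and, via (\ref{eq:oldbasis}) together with $\tau(n/m)(n/m)^{1/2} \leq \tau(n) n^{1/2}$, extends to every element of $\mathcal B^{\textnormal{old}}$. A Leibniz expansion of the determinant then yields $|\det A| \leq g!\prod_{j \in J} \tau(j) j^{1/2}$, which combined with $m_f \mid r_f \leq |\det A|$ gives statement (ii). Part (i) follows by crude estimates: $g \leq d/12$ and $d \leq N\prod_{p\mid N}(1+1/p)$ via (\ref{eq:destimate}), Stirling for $g!$, and standard bounds on $\prod_{j \leq \lfloor d/6+1\rfloor}\tau(j) j^{1/2}$, all absorbed into $\frac{1}{2} N(\log N)^2$.

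The main obstacle I expect is the middle step linking $r_f$ to $|\det A|$: establishing the rationality of $c_1(h)$ via the Hecke idempotent, running the Galois argument to control $(\det A)^2 \in \ZZ$, and reformulating $r_f$ as the denominator of $\pi(L)$ must all be carried out carefully in order to pass cleanly from ``$c_1(h)\cdot \det A \in \overline\ZZ$'' to a rational integer divisibility statement.
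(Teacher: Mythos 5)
Your proof is correct and takes essentially the same approach as the paper: reduce via $m_f\mid r_f$, choose $J$ from the order-of-vanishing bound (\ref{eq:rr}), estimate $r_f$ by $|\det F|$ through Cramer's rule and Galois invariance of $(\det F)^2$, then apply Ramanujan--Petersson and the elementary estimates of (\ref{eq:destimate}). The only place you diverge is in how you reach $r_f\le|\det F|$: you repackage $r_f$ as the denominator of $c_1(L)$ and invoke a Hecke idempotent for $c_1(h)\in\QQ$, while the paper expands the $f_c$ from (\ref{eq:congruencedef}) in $\mathcal B$, plugs the Cramer solution of $F(x)=(-1,0,\dotsc,0)$ into the resulting identity, and reads off $1=(y,x)r_f$, hence $r_f^2\mid(\det F)^2$, without ever needing $c_1(h)\in\QQ$; if you keep your framing, it is cleaner to deduce $c_1(h)\in\QQ$ from the same $G_\QQ$-permutation of $\mathcal B$ that you already use for $(\det A)^2\in\ZZ$ (since $f$ is the unique basis element fixed by the whole group), which avoids the somewhat delicate verification that the idempotent $e_f\in\mathbb T_\ZZ\otimes\QQ$ cuts out exactly $\CC f$ and coincides with the Petersson-orthogonal projection.
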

\begin{proof}
We first show (ii). It follows from (\ref{eq:div}) that $m_f\leq r_f$. To estimate $r_f$ we reduce the problem to solve (by Cramer's rule) explicitly a system of linear Diophantine equations.

Let $I=\{1,\dotsc,g\}$, let $J$ be a finite non-empty set of positive integers and put $\delta=\lvert J\rvert$.  We write $l=\lfloor d/6+1\rfloor$ and we denote by  $\mathcal B=\{f_i, i\in I\}$ the Atkin-Lehner basis for $S_2(\Gamma_0(N))$, see Section \ref{sec:cuspforms}.  To show that the linear morphism $F(J)=(a_j(f_i)): \mathbb C^{\delta}\to \mathbb C^g$ is surjective for $J=\{1,\dotsc,l\}$, we assume the contrary and deduce a contradiction. If $F(J)$ is not surjective for $J=\{1,\dotsc,l\}$, then we obtain a nonzero $f_0\in  S_2(\Gamma_0(N))$ with Fourier expansion
$\sum_{n\geq l}a_n(f_0)q^n$.
Hence, $f_0$ vanishes at $\infty$ of order at least $l>d/6$ and this contradicts (\ref{eq:rr}). 
We conclude that $F(J)$ is surjective for $J=\{1,\dotsc,l\}$. Therefore we may and do take  $J\subset\{1,\dotsc,l\}$ such that $F=F(J)$ is an isomorphism.

We claim that $r_f\leq \lvert \det(F)\rvert$. To verify this claim we take $(k_i)\in \mathbb C^{g}$ such that $f_c\in S_2(\Gamma_0(N))$ from (\ref{eq:congruencedef}) takes the form
$
f_c=\sum_i k_if_i.
$
Properties of $\mathcal B$ show that we may and do take $f_1=f$ and that $( f,f_i)=0$ for any $i\geq 2$. This implies that $k_1=0$, since $( f,f_c)=0$ by (\ref{eq:congruencedef}). Therefore, on comparing Fourier coefficients, we see that $a_j(f_c)=\sum_{i\geq 2} k_ia_j(f_i)$ for all $ j\in J.$ Then (\ref{eq:congruencedef}) gives $y=(y_j)\in \ZZ^{g}$ such that any  $x=(x_j)\in \mathbb C^{g}$ satisfies
\begin{equation}\label{eq:fundamental}
\sum_{i\geq 2}k_i(f_i,x)=(f_c,x)=(f,x)+(y,x)r_f,
\end{equation}
where $(h,x)=\sum_{j\in J}a_j(h)x_j$ for $h\in S_2(\Gamma_0(N))$ and  $(y,x)=\sum_{j\in J}y_jx_j$. We write $b=(-1,0,\dotsc,0)\in \mathbb C^{g}$. It follows that if $x=(x_j)\in\mathbb C^{g}$ satisfies
$
F(x)=b
$, then $(f,x)=-1$, and $(f_c,x)=0$ by the first equality of (\ref{eq:fundamental}). Hence, the second equality of (\ref{eq:fundamental}) shows that any solution $x=(x_j)\in\mathbb C^{g}$ of
$
F(x)=b$
satisfies  
\begin{equation}\label{eq:rf}
1=(y,x)r_f.
\end{equation}
The determinant $\det(F)$ of the isomorphism $F$ is nonzero. Thus Cramer's rule gives  $\xi\in \ZZ[\{a_j(f_i)\}]^g$  
such that the unique solution $x=F^{-1}(b)$ of $F(x)=b$ takes the form
\begin{equation}\label{eq:x}
x=\xi\det(F)^{-1}.
\end{equation}
To prove that $\det(F)^2\in\ZZ$ we use (\ref{eq:oldbasis}). It gives that $a_j(f_i)$ is a coefficient of a newform. Thus it is an eigenvalue of a certain Hecke operator. This implies that all $a_j(f_i)$ are algebraic integers. 
Hence $\det(F)$ and all entries of $\xi$ are algebraic integers. Further, Galois conjugates of newforms are newforms of the same level. 
Therefore, on using properties of the basis $\mathcal B$ discussed in Section \ref{sec:cuspforms}, we see that any element $\sigma$ of the absolute Galois group of $\QQ$ ``permutes'' the rows of the matrix $F$. Hence, we get that any such $\sigma$ satisfies $\sigma(\det(F))=\pm\det(F)$ and 
we deduce that $\det(F)^2\in\ZZ$ as desired. Then the formulas
(\ref{eq:rf}) and (\ref{eq:x}) imply  $r_f^2\mid \det(F)^2$ which proves our claim $r_f\leq \lvert\det(F)\rvert$.

To estimate $\lvert\det(F)\rvert$ we use the Ramanujan-Petersson bounds for Fourier coefficients, which hold in particular for any newform, and thus for all $f_i\in \mathcal B$ by (\ref{eq:oldbasis}). 
These bounds imply that $\det(F)\leq g!\prod_{j\in J}\tau(j)j^{1/2}$ 
and then the above inequalities give (ii). 

It remains to prove (i). Any elliptic curve over $\QQ$ has conductor at least $11$.  
Therefore we may and do assume that $N\geq 11$. 
Next, we observe that any integer $n\geq 1$ satisfies the elementary inequalities: $\frac{1}{n}\sum_{k=1}^n\tau(k)\leq 1+\log n$ and $\prod (1+1/p)\leq 1+ (\log n)/(2\log 2)$ with the product taken over all rational primes $p$ which divide $n$. Further, (\ref{eq:destimate}) shows that $2g\leq \lfloor 1+d/6\rfloor=l$ and hence (ii) implies that  $m_f\leq (g!l!)^{1/2}\prod\tau(j)$ with the product taken over the elements $j$ of a set $J\subset \{1,\dotsc,l\}$ of cardinality $g$. Then the above inequalities and (\ref{eq:destimate}) lead to (i). This completes the proof of Lemma \ref{lem:moddeg}.
\end{proof}
Frey  \cite[p.544]{frey:ternary} remarked without proof that it is easy  to show the asymptotic bound $\log m_f\ll N\log N$. 
It seems that this estimate is still very far from being optimal. In fact Frey \cite{frey:linksulm} and Mai-Murty \cite{mamu:weakmodular} showed that a certain polynomial upper bound for $m_f$ in terms of $N$ is equivalent to a certain version of the $abc$-conjecture.  

The above proof shows in addition that the inequalities of Lemma \ref{lem:moddeg} hold with $m_f$ replaced by the congruence number $r_f$ of $f$. We note that Murty \cite[Corollary 6]{murty:strongmodular} used a similar method to prove a slightly weaker upper bound for $r_f$ in terms of $N$. Further, we mention that Agashe-Ribet-Stein proved in \cite[Theorem 2.1]{agrist:congruence} that any rational prime number $p$, with $\textnormal{ord}_p(N)\leq 1$, satisfies $\textnormal{ord}_p(m_f)=\textnormal{ord}_p(r_f)$. Moreover, they conjectured in \cite[Conjecture 2.2]{agrist:congruence} that $\textnormal{ord}_p(\frac{r_f}{m_f})\leq \frac{1}{2}\textnormal{ord}_p(N)$ for all rational prime numbers $p$.

\subsection{Faltings heights of Jacobians of modular curves}\label{sec:hf}
In this section, we give explicit upper bounds for the stable Faltings heights of the Jacobians of certain classical modular curves in terms of their level. These upper bounds are based on a result of Javanpeykar given in \cite{javanpeykar:belyi}.

We begin to state the result of Javanpeykar. Let $X$ be a smooth, projective and connected curve over $\bar{\QQ}$ of genus $g$, where $\bar{\QQ}$ is an algebraic closure of $\QQ$. We denote by $\mathbb P^1$  the projective line over $\bar{\QQ}$ and we let $\mathcal D$ be the set of degrees of finite morphisms $X\to \mathbb P^1$  which are unramified outside $0,1,\infty$. Belyi's theorem \cite{belyi:theorem} shows that $\mathcal D$ is non-empty. The Belyi degree $\deg_B(X)$ of $X$ is defined by
$\deg_B(X)=\min \mathcal D.$ 
Let $\textnormal{Pic}^0(X)$ be the Jacobian of $X$, and let $h_F$ be the stable Faltings height defined in Section \ref{sec:heights}. We recall that $h_F(0)=0$ and then Javanpeykar's inequality \cite[Theorem 1.1.1]{javanpeykar:belyi} gives  
$$
h_F(\textnormal{Pic}^0(X))\leq 13\cdot 10^6\deg_B(X)^5g.
$$
We point out that $h_F(\textnormal{Pic}^0(X))$ is well-defined, since the height $h_F$ is stable.
Let $\Gamma\subset \textnormal{SL}_2(\ZZ)$ be a congruence subgroup. The associated modular curve has a smooth, projective and connected model $X(\Gamma)$ over $\bar{\QQ}$. Let $g_\Gamma$ be the genus of $X(\Gamma)$, and let $\epsilon_\infty$ be the number of cusps of $X(\Gamma)$. The inclusion $\Gamma\subset \textnormal{SL}_2(\ZZ)$ induces a natural projection $X(\Gamma)\to X(1)_{\bar{\QQ}}$ and the degree $d_\Gamma$ of this projection satisfies 
\begin{equation}\label{eq:index}
g_\Gamma\leq 1+\frac{d_\Gamma}{12}-\frac{\epsilon_\infty}{2}, \ \ \ \ \ d_\Gamma=\begin{cases}[\textnormal{SL}_2(\ZZ):\Gamma] & \textnormal{if } \textnormal{-id}\in \Gamma,\\ [\textnormal{SL}_2(\ZZ):\Gamma]/2 & \textnormal{if } \textnormal{-id}\notin \Gamma, \end{cases}
\end{equation}
where $[\textnormal{SL}_2(\ZZ):\Gamma]$ denotes the index of the subgroup $\Gamma\subset\textnormal{SL}_2(\ZZ)$ and $\textnormal{id}\in \textnormal{SL}_2(\ZZ)$ denotes the identity.
Furthermore, the projection $X(\Gamma)\to X(1)_{\bar{\QQ}}$ ramifies at most over the two elliptic points of $X(1)_{\bar{\QQ}}$ or over the cusp of $X(1)_{\bar{\QQ}}$, 
 and it holds that $X(1)_{\bar{\QQ}}\cong \mathbb P^1$. 
Therefore it follows that $\deg_B(X(\Gamma))\leq d_\Gamma$ 
and then the  displayed estimate for $h_F(\textnormal{Pic}^0(X))$ implies
\begin{equation}\label{eq:ari}
h_F(J(\Gamma))\leq 13\cdot 10^6d_\Gamma^5g_\Gamma
\end{equation}
for $J(\Gamma)=\textnormal{Pic}^0(X(\Gamma))$  the Jacobian of $X(\Gamma)$. 

For any integer $N\geq 1$, we consider the classical congruence subgroups $\Gamma_1(N)\subset \textnormal{SL}_2(\ZZ)$ and $\Gamma(N)\subset \textnormal{SL}_2(\ZZ)$, and to ease notation we write $J_1(N)=J(\Gamma_1(N))$ and $J(N)=J(\Gamma(N))$. Further, we let $J_0(N)$ be the modular Jacobian defined in Section \ref{sec:modulardegree}. On combining the above results, we obtain the following lemma.

\begin{lemma}\label{lem:hj1n}
If $N\geq 1$ is an integer, then $$h_F(J_0(N))\leq 7\cdot 10^7(N\log N)^6, \ \ \ h_F(J_1(N))\leq 17\cdot 10^3 N^{12} , \ \ \ h_F(J(N))\leq 17\cdot 10^3N^{18}.$$
\end{lemma}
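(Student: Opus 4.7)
The plan is to apply Javanpeykar's inequality (\ref{eq:ari}) to each of $J_0(N)$, $J_1(N)$, $J(N)$, combined with the classical index formulas for the congruence subgroups of $\textnormal{SL}_2(\ZZ)$ and a modest refinement of the genus bound (\ref{eq:index}).

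First I would recall the classical formulas
\[
[\textnormal{SL}_2(\ZZ):\Gamma_0(N)]=N\prod_{p\mid N}(1+1/p), \quad [\textnormal{SL}_2(\ZZ):\Gamma_1(N)]=N^2\prod_{p\mid N}(1-1/p^2),
\]
\[
[\textnormal{SL}_2(\ZZ):\Gamma(N)]=N^3\prod_{p\mid N}(1-1/p^2),
\]
and note that $-\textnormal{id}\in\Gamma_0(N)$ for every $N$, while $-\textnormal{id}\notin\Gamma_1(N)$ and $-\textnormal{id}\notin\Gamma(N)$ for every $N\geq 3$. Plugging these into (\ref{eq:index}), using the elementary estimate $\prod_{p\mid N}(1+1/p)\leq 1+(\log N)/(2\log 2)$ already employed in the proof of Lemma \ref{lem:moddeg} together with $\prod_{p\mid N}(1-1/p^2)\leq 1$, supplies upper bounds $d_{\Gamma_0(N)}\leq N(1+(\log N)/(2\log 2))$, $d_{\Gamma_1(N)}\leq N^2/2$ for $N\geq 3$, and $d_{\Gamma(N)}\leq N^3/2$ for $N\geq 3$.

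Next I would sharpen the genus estimate in (\ref{eq:index}) from $g_\Gamma\leq 1+d_\Gamma/12$ to $g_\Gamma\leq d_\Gamma/12$, using that each of the relevant modular curves has at least two cusps for all $N\geq 2$, so the term $\epsilon_\infty/2\geq 1$ in (\ref{eq:index}) absorbs the $+1$. Feeding this back into (\ref{eq:ari}) yields
\[
h_F(J(\Gamma))\leq 13\cdot 10^6\, d_\Gamma^5 g_\Gamma \leq \frac{13\cdot 10^6}{12}\, d_\Gamma^6,
\]
and inserting the three bounds on $d_\Gamma$ above produces the three claimed inequalities after an elementary numerical check (for $J_0(N)$ one bounds $1+(\log N)/(2\log 2)\leq 2\log N$ in the regime where $\log N$ is not too small).

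The main obstacle is not mathematical but purely numerical book-keeping for small $N$: the bound $\epsilon_\infty\geq 2$ fails for $N=1$, and for $J_0(N)$ the elementary estimate on $\prod(1+1/p)$ is weaker than $\log N$ for small $N$. However, the exceptional $N$ in each case are precisely those for which the curves $X_0(N)$, $X_1(N)$, $X(N)$ have genus zero; then the Jacobian is trivial and $h_F=0$, so the asserted inequalities hold vacuously.
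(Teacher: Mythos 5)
Your proof is correct and follows essentially the same route as the paper: feed the classical index formulas and the genus estimate from (\ref{eq:index}) into Javanpeykar's inequality (\ref{eq:ari}), with the $h_F=0$ convention disposing of the degenerate small $N$. (The paper already packages the sharpened genus bound $g\leq d/12$ for $X_0(N)$ as part of (\ref{eq:destimate}), while you re-derive it from $\epsilon_\infty\geq 2$; these are the same observation.) One small imprecision: the exceptional $N$ for which your elementary bounds fail are \emph{contained among}, not \emph{precisely}, those with genus-zero modular curve — for instance $X_0(N)$ has genus zero also for $N=12,13,16,18,25$, which are not exceptional for your estimate — but since what is needed is only the inclusion, the argument is unaffected.
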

\begin{proof}
We recall that $h_F(0)=0$. Hence, to prove the claimed inequalities, we may and do assume that the Jacobians are non-trivial. On combining (\ref{eq:destimate})  and (\ref{eq:ari}), we obtain an upper bound for $h_F(J_0(N))$ as stated. 
For $\Gamma=\Gamma_1(N)$ or $\Gamma=\Gamma(N)$ there exist standard formulas which express $[\textnormal{SL}_2(\ZZ):\Gamma]$ and $\epsilon_\infty$ in terms of $N$, see for example \cite{shimura:automorphic} or \cite{dish:modular}. 
These formulas together with (\ref{eq:index}) and (\ref{eq:ari}) imply upper bounds for $h_F(J_1(N))$ and $h_F(J(N))$ as claimed. This completes the proof of Lemma \ref{lem:hj1n}.
\end{proof}

To conclude this section we discuss results in the literature which are related to Lemma \ref{lem:hj1n}. We begin with a theorem of Ullmo and we put $g=g_{\Gamma_0(N)}$. If $N\geq 1$ is a square-free integer, then  \cite[Th\'eor\`eme 1.2]{ullmo:j0n} gives the asymptotic upper bound 
\begin{equation}\label{eq:ullmo}
h_F(J_0(N))\leq \frac{g}{2}\log N+o(g\log N).
\end{equation}
Further, if $N\geq 1$ is a square-free integer, with $2\nmid N$ and $3\nmid N$,  then Jorgenson-Kramer provide in \cite[Theorem 6.2]{jokr:delta} the asymptotic formula
\begin{equation}\label{eq:jk}
h_F(J_0(N))=\frac{g}{3}\log N+o(g\log N).
\end{equation}
On combining (\ref{eq:index}) with the above displayed results, one can slightly improve the bounds of Lemma \ref{lem:hj1n} for special integers $N$. However, our proofs of the Diophantine results in the following sections require  bounds for all integers $N\geq 1$ and thus the above discussed results of Ullmo and Jorgenson-Kramer are not sufficiently general for our purpose. 
\section{Height and conductor of elliptic curves over $\QQ$}\label{sec:hcell}

In the first part of this section, we give explicit exponential versions of Frey's height conjecture and of Szpiro's discriminant conjecture for elliptic curves over $\QQ$. We also derive an effective version 
of Shafarevich's conjecture for elliptic curves over $\QQ$. 
In the second part,  we prove Propositions \ref{prop:he} and  \ref{prop:shaf} on combining the Shimura-Taniyama conjecture with lemmas obtained in previous sections.

\subsection{Height, discriminant and conductor inequalities}\label{subsec:he}

Let $E$ be an elliptic curve over $\QQ$. We denote by $N_E$ the conductor of $E$, and we denote by $h(E)$  the relative Faltings height of $E$. See Section \ref{sec:heights} for the definitions of $N_E$ and $h(E)$. 
We now can state the following proposition which gives an exponential version of Frey's height conjecture \cite[p.39]{frey:linksulm} for all elliptic curves over $\QQ$.

\begin{proposition}\label{prop:he}
If $E$ is an elliptic curve over $\QQ$, then $$h(E)\leq \frac{1}{4}N_E(\log N_E)^2+9.$$
\end{proposition}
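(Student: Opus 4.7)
The plan is to combine the Shimura-Taniyama conjecture with an effective form of Frey's height estimate and with the explicit bound on the modular degree obtained in Lemma \ref{lem:moddeg}. By $(ST)$, the elliptic curve $E$ is modular, so there is a newform $f \in S_2(\Gamma_0(N_E))$ of level $N_E$ with integer Fourier coefficients such that the Eichler--Shimura quotient $E_f = J_0(N_E)/I_f J_0(N_E)$ defined in (\ref{def:ef}) is $\QQ$-isogenous to $E$; the isogeny is provided by Faltings' isogeny theorem since $E$ and $E_f$ share the same $L$-function.

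Next I would establish an effective Frey inequality of the form $h(E_f)\leq \tfrac{1}{2}\log m_f + c_1(N_E)$, where $c_1(N_E)$ grows at most like $\log N_E$ and $m_f$ is the modular degree of $f$. The main input is the modular parametrization $\varphi_f:X_0(N_E)\to E_f$ of degree $m_f$ constructed in Section \ref{sec:modulardegree}. If $\omega$ is a N\'eron differential on $E_f$, then $\varphi_f^{*}\omega = c_f\cdot 2\pi i f(\tau)\, d\tau$ with $c_f\in \QQ^{\times}$ the Manin constant, which is bounded unconditionally by work of Edixhoven, Abbes--Ullmo and others. Equating $L^{2}$-norms under this degree-$m_f$ cover produces the identity
$$c_f^{2}\cdot 4\pi^{2}(f,f) = m_f\,\|\omega\|^{2},$$
where $(f,f)$ is the Petersson inner product and $\|\omega\|^{2} = \tfrac{i}{2}\int_{E_f(\CC)}\omega\wedge\bar\omega$. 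Since $\QQ$ has class number one and $\omega$ generates the N\'eron lattice, the height formula of Section \ref{sec:heights} collapses to $h(E_f) = -\tfrac{1}{2}\log\|\omega\|^{2}$. Substituting the displayed identity and inserting an effective lower bound for $(f,f)$ (coming from Rankin--Selberg, i.e.\ from the Shimura formula for $L(\textnormal{Sym}^{2}f,2)$ together with a lower bound for this $L$-value at the edge of the critical strip) yields the required Frey inequality.

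The remaining steps are then routine assembly. Lemma \ref{lem:abisogvar}(ii), which relies on Kenku's classification of cyclic $\QQ$-isogenies of elliptic curves over $\QQ$, gives $|h(E) - h(E_f)|\leq \tfrac{1}{2}\log 163$. Lemma \ref{lem:moddeg}(i) gives $\log m_f \leq \tfrac{1}{2}N_E(\log N_E)^{2}$. Combining these estimates with the Frey inequality yields
$$h(E)\leq \tfrac{1}{4}N_E(\log N_E)^{2} + \tfrac{1}{2}\log 163 + c_1(N_E),$$
and a direct verification (using $\log N_E\leq N_E$ to absorb the sub-leading term $c_1(N_E)\ll \log N_E$) shows that the remaining additive terms sum to at most $9$ once the constants in the Petersson and Manin bounds have been made explicit.

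The main obstacle is the second step: extracting a small, completely explicit additive constant from Frey's inequality. This forces one to pin down an unconditional numerically effective upper bound for the Manin constant $c_f$ and a sharp effective lower bound for the Petersson norm $(f,f)$ of a weight-two newform of level $N_E$. Both inputs are standard in principle, but carefully tracking their numerical constants so as to recover the clean estimate $\tfrac{1}{4}N_E(\log N_E)^{2} + 9$ is the delicate part of the argument.
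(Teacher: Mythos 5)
Your overall strategy coincides with the paper's: invoke $(ST)$ to get the Eichler--Shimura quotient $E_f$ and the modular parametrization of degree $m_f$, derive the exact identity relating $h(E_f)$, $\log m_f$, $\log(f,f)$ and $\log c_f$ (your displayed $L^2$-identity is precisely formula (\ref{eq:fund}) rearranged), control $\log m_f$ by Lemma \ref{lem:moddeg}(i), pass from $E_f$ to $E$ by Lemma \ref{lem:abisogvar}(ii), and use Edixhoven's integrality $c_f\in\ZZ$ to get $\log(2\pi c_f)\geq\log 2\pi$. Up to this point the two arguments are the same.

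The one place where you diverge is the lower bound for the Petersson norm $(f,f)$, and it is exactly here that your argument has a gap. You propose to control $(f,f)$ via the Shimura/Rankin--Selberg identity for $L(\textnormal{Sym}^2 f,2)$, and you state that the resulting additive correction $c_1(N_E)$ grows like $\log N_E$, to be later ``absorbed'' into the $+9$. That absorption cannot work: $\tfrac{1}{2}\log m_f\leq \tfrac14 N_E(\log N_E)^2$ already exhausts the main term, so any level-dependent term $c_1(N_E)\gg 1$ cannot be folded into a fixed absolute constant $9$ for all conductors $N_E\geq 11$. Moreover, while Rankin--Selberg together with an effective lower bound for $L(\textnormal{Sym}^2 f,2)$ would actually give a bound for $(f,f)$ that improves as $N_E$ grows, making this numerically explicit (constants in the Hoffstein--Lockhart/Goldfeld--Hoffstein--Lieman circle) is considerably harder than needed. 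The paper sidesteps all of this by using an elementary, completely level-independent lower bound $(f,f)\geq e^{-4\pi}/(4\pi)$ (from Abbès--Ullmo or Silverman), which comes from evaluating the Petersson integral on the strip $|x|\leq 1/2$, $y\geq 1$ and using $a_1(f)=1$. With that bound, together with $c_f\geq 1$ and $\tfrac12\log 163$ from the isogeny step, the additive terms sum to a fixed number just below $9$. If you replace your Rankin--Selberg input by this elementary Petersson estimate, your argument becomes correct and identical to the paper's; as written, the last step does not close.

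One smaller remark: the paper also carefully constructs an optimal quotient $E'$ of $J_0(N)$ via Poincaré reducibility before identifying it with $E_f$ (via Faltings' isogeny theorem and the matching $L$-factors); this extra care is what justifies applying Edixhoven's Manin-constant result. Since your $E_f=J_0(N)/I_fJ_0(N)$ is itself an optimal quotient, you do not actually need the Poincaré step, and this part of your proposal is fine as stated.
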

Let $K$ be a number field. On using a completely different method, which is based on the theory of logarithmic forms, we established in \cite[Theorem 2.1]{rvk:height} a version of Proposition \ref{prop:he} for arbitrary elliptic curves over $K$. However, in the case of elliptic curves $E$ over $\QQ$, \cite[Theorem 2.1]{rvk:height} provides only the weaker inequality $h(E)\leq (25N_E)^{162}$.

As in Section \ref{sec:explicitconstr}, we denote by $\Delta_E$ the norm of the usual minimal discriminant ideal of $E$. Our next result provides an explicit exponential 
version of Szpiro's discriminant conjecture \cite[p.10]{szpiro:spe} for elliptic curves over $\QQ$.  

\begin{corollary}\label{cor:de}
Any elliptic curve $E$ over $\QQ$ satisfies $$\log\Delta_E\leq 3N_E(\log N_E)^2+124.$$
\end{corollary}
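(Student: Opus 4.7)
The plan is to chain together two earlier results: the exponential height–conductor bound in Proposition \ref{prop:he} and the discriminant–height comparison in Lemma \ref{lem:dehe}. There is essentially no new work required; the calculation is purely arithmetic.

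First I would apply Lemma \ref{lem:dehe} to $E$, viewed as an elliptic curve over $S = \sp(\QQ)$, so that $d = [\QQ:\QQ] = 1$. The lemma then yields
\begin{equation*}
\log \Delta_E \leq 12\bigl(h(E) + 4/3\bigl) = 12\, h(E) + 16.
\end{equation*}
This step is immediate from what has already been proved; the inequality in Lemma \ref{lem:dehe} simply packages the standard comparison between the minimal discriminant and the Faltings height, which in turn comes from writing $|(2\pi)^{12}\Delta(\tau_\sigma)\,\textnormal{im}(\tau_\sigma)^6| \leq e^{16}$ at each archimedean place.

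Next I would insert the bound from Proposition \ref{prop:he}, namely $h(E) \leq \tfrac{1}{4}N_E(\log N_E)^2 + 9$, into the previous inequality. This gives
\begin{equation*}
\log \Delta_E \leq 12\Bigl(\tfrac{1}{4}N_E(\log N_E)^2 + 9\Bigl) + 16 = 3\, N_E(\log N_E)^2 + 108 + 16,
\end{equation*}
which is exactly $3 N_E(\log N_E)^2 + 124$, as claimed.

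Because both ingredients are already established, there is no genuine obstacle here; the only thing to verify is the numerical bookkeeping ($12 \cdot 9 + 16 = 124$ and $12 \cdot \tfrac{1}{4} = 3$). The real content of Corollary \ref{cor:de} is pushed entirely into Proposition \ref{prop:he}, whose proof is where modularity and the modular degree estimate of Lemma \ref{lem:moddeg} are used; the present corollary is essentially a translation of that height bound into the Szpiro-style form via Lemma \ref{lem:dehe}.
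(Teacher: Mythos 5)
Your proposal is correct and is exactly the paper's proof: the corollary is obtained by combining $\log\Delta_E\leq 12h(E)+16$ from Lemma \ref{lem:dehe} (with $d=1$) with the bound $h(E)\leq \frac{1}{4}N_E(\log N_E)^2+9$ of Proposition \ref{prop:he}, and your arithmetic ($12\cdot 9+16=124$, $12\cdot\frac{1}{4}=3$) checks out.
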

\begin{proof}
This follows from Proposition \ref{prop:he}, since $\log\Delta_E\leq 12h(E)+16$ by Lemma \ref{lem:dehe}.  
\end{proof}
On combining Arakelov theory for arithmetic surfaces with the theory of logarithmic forms, we obtained in \cite{rvk:szpiro}  versions of Corollary \ref{cor:de} for all hyperelliptic (and certain more general) curves over $K$. In the case of elliptic curves $E$ over $\QQ$, we see that Corollary \ref{cor:de} improves the inequality $\log \Delta_E\leq (25N_E)^{162}$  
provided by \cite[Theorem 3.3]{rvk:szpiro}.

To state our next corollary we denote by $h(W)$ the height of a Weierstrass model $W$ of $E$ over $\sp(\ZZ)$, defined in (\ref{def:w}).
Let $S$ be a non-empty open subscheme of $\sp(\ZZ)$ and
$$
\nu_S=12^3N_S^2, \ \ \ \ N_S=\prod p 
$$
with the product taken over all rational primes $p$ not in $S$. We say that an arbitrary elliptic curve $E$ over $\QQ$ has good reduction over $S$ if $E$ has good reduction at all rational primes in $S$\footnote{This definition is equivalent to the  classical notion of good reduction outside a finite set $\mathcal S$ of rational prime numbers. Indeed $S=\sp(\ZZ)-\mathcal S$ has the structure of a non-empty open subscheme of $\sp(\ZZ)$, and $E$ has good reduction outside $\mathcal S$ if and only if $E$ has good reduction over $S$.}.  It turns out that the number $\nu_S$ has the property that any elliptic curve $E$ over $\QQ$, with good reduction over $S$, has conductor $N_E$ dividing $\nu_S$.  The Diophantine inequality in Proposition \ref{prop:he} leads to the following fully effective version of the Shafarevich conjecture \cite{shafarevich:conjecture} for elliptic curves over $\QQ$.

\begin{corollary}\label{cor:shaf}
If $[E]$ is a $\QQ$-isomorphism class of elliptic curves over $\QQ$ with good reduction over $S$, then there exists a Weierstrass  model $W$ of $E$ over $\sp(\ZZ)$ that satisfies
$$h(W)\leq \frac{1}{2}\nu_S(\log \nu_S)^2.$$
In particular, there exist only finitely many $\QQ$-isomorphism classes of elliptic curves over $\QQ$ with good reduction over $S$ and these classes can be determined effectively.
\end{corollary}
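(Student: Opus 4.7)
The plan is to chain together the conductor bound forced by good reduction, the height-conductor inequality of Proposition \ref{prop:he}, and the quasi-minimal Weierstrass model construction of Lemma \ref{lem:wei}, all specialized to $K = \QQ$, and then to extract effectivity from the concrete shape of $h(W)$.

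First, good reduction at every prime of $S$ forces the conductor exponent $f_p$ of $E$ to vanish for $p \in S$, so $N_E = \prod_{p \notin S} p^{f_p}$. The Brumer-Kramer exponent bounds recalled in and around (\ref{eq:cond23}) give $f_p \leq 2$ for $p \geq 5$, $f_2 \leq 8$, and $f_3 \leq 5$; combined with $N_S = \prod_{p \notin S} p$ this yields the divisibility $N_E \mid 2^6 \cdot 3^3 \cdot N_S^2 = \nu_S$, and in particular $N_E \leq \nu_S$. Feeding this into Proposition \ref{prop:he} produces $h(E) \leq \tfrac{1}{4}\nu_S(\log \nu_S)^2 + 9$. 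Next, I would invoke Lemma \ref{lem:wei} with $K = \QQ$: the constant $\kappa$ vanishes, $B = \sp(\ZZ)$, and the lemma supplies a Weierstrass model $W$ of $E$ over $\sp(\ZZ)$ with $h(W) \leq h(E) + \tfrac{1}{2}\log(\max(1, h_F(E))) + 7$. Using $h_F(E) \leq h(E)$ and the previous height estimate, the right-hand side takes the form $A + \tfrac{1}{2}\log(A) + 16$ with $A = \tfrac{1}{4}\nu_S(\log \nu_S)^2 + 9$, and the target inequality $h(W) \leq \tfrac{1}{2}\nu_S(\log \nu_S)^2$ is equivalent to $\tfrac{1}{2}\log(A) + 34 \leq A$. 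Since $\nu_S \geq 12^3 = 1728$ forces $A > 2\cdot 10^4$, this elementary inequality holds with enormous margin and the first assertion follows.

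For the second assertion I would observe that over $\QQ$ the units in $\ZZ$ are $\pm 1$ with $\epsilon^{12} = 1$, so $h(W) = \tfrac{1}{12}h(c_4^3 : c_6^2)$ in the usual Weil-height sense on $\PP^1(\QQ)$; a bound on $h(W)$ therefore pins down the integers $c_4, c_6 \in \ZZ$ of a defining equation of $W$ up to finitely many explicit possibilities. Enumerating this finite list and retaining only those Weierstrass equations whose associated elliptic curve has good reduction over $S$ recovers all admissible $\QQ$-isomorphism classes, proving both the finiteness and the effectivity. The whole argument is really a diagram chase through previously established estimates: the single nontrivial ingredient is Proposition \ref{prop:he} (itself resting on modularity), and the only technical verification internal to this corollary is the elementary numerical absorption inequality that promotes the factor $\tfrac{1}{4}$ of Proposition \ref{prop:he} to $\tfrac{1}{2}$.
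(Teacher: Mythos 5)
Your proposal is correct and follows essentially the same route as the paper: deduce $N_E\mid\nu_S$ from (\ref{eq:cond23}), bound $h(E)$ via Proposition \ref{prop:he}, and pass to a Weierstrass model via Lemma \ref{lem:wei} with $\kappa=0$. The only difference is that you spell out the numerical absorption of the additive constants and the effectivity argument via $h(W)=\tfrac{1}{12}h(c_4^3,c_6^2)$, both of which the paper leaves implicit; your bookkeeping is slightly over-generous (the $+16$ could be $+7$) but the margin is so large that this is harmless.
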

\begin{proof}
We take a $\QQ$-isomorphism class $[E]$ of elliptic curves over $\QQ$, with good reduction over $S$. Lemma \ref{lem:wei} gives a Weierstrass model $W$ of $E$ over $\sp(\ZZ)$ that satisfies $$h(W)\leq h(E)+\frac{1}{2}\log\left(\max(1,h_F(E))\right)+7,$$
where $h_F(E)$ is the stable Faltings height of $E$.
Further, it holds that $h_F(E)\leq h(E)$ and (\ref{eq:cond23}) leads to $N_E\mid \nu_S$. 
Thus Proposition \ref{prop:he} implies Corollary \ref{cor:shaf}.
\end{proof}
The first effective version of the Shafarevich conjecture for elliptic curves over $\QQ$ is due to Coates \cite[p.426]{coates:shafarevich}. He applied the theory of logarithmic forms. This  theory is also used in \cite[Theorem]{rvk:hyperelliptic} which provides a version of Corollary \ref{cor:shaf} for arbitrary hyperelliptic curves over $K$. In the case of elliptic curves over $\QQ$, Corollary \ref{cor:shaf} improves the actual best bound $h(W)\leq (2N_S)^{1296}$ which was obtained in \cite[Theorem]{rvk:hyperelliptic}. 

We mention that \cite[Section 2]{rvk:height} gives in addition effective asymptotic versions of the above results: $h(E)\ll_\epsilon N_E^{21+\epsilon}$,  $\log \Delta_E\ll_\epsilon N_E^{21+\epsilon}$ and $h(W)\ll_\epsilon N_S^{21+\epsilon}$. 
Further, it is discussed in \cite[Section 2]{rvk:height} that the exponent $21+\epsilon$ is optimal for the known methods which are based on the theory of logarithmic forms. Thus these methods can not produce inequalities as strong as those in Proposition \ref{prop:he}, Corollary \ref{cor:de} and Corollary \ref{cor:shaf}.

We denote by $N(S)$ the number of $\QQ$-isomorphism classes of elliptic curves over $\QQ$, with good reduction over $S$. The explicit height estimate in Corollary \ref{cor:shaf} implies  an explicit upper bound for $N(S)$. However, this bound would be exponential in terms of $\nu_S$. The following Proposition \ref{prop:shaf} gives an explicit upper bound for $N(S)$ which is polynomial in terms of $\nu_S$. The proof uses inter alia the Shimura-Taniyama conjecture  and a result of Mazur-Kenku \cite{kenku:ellisogenies} on $\QQ$-isogeny classes of elliptic curves.
\begin{proposition}\label{prop:shaf}
It holds that $N(S)\leq \frac{2}{3}\nu_S\prod_{p\mid \nu_S}(1+1/p)$ with the product taken over all rational primes $p$ which divide $\nu_S$. 
\end{proposition}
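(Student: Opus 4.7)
The plan is to bound $N(S)$ by first counting $\QQ$-isogeny classes of elliptic curves over $\QQ$ with conductor dividing $\nu_S$, and then multiplying by the Mazur-Kenku uniform bound on the cardinality of each such class.

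First I would observe that if $E$ is an elliptic curve over $\QQ$ with good reduction over $S$, then its conductor exponent $f_p$ vanishes for every $p\in S$, is at most $2$ for every $p\notin S$ with $p\geq 5$, and is controlled at $p\in\{2,3\}$ by (\ref{eq:cond23}); a direct calculation gives $N_E\mid \nu_S=12^3N_S^2$. By the Shimura-Taniyama conjecture \cite{breuil:modular}, every such $E$ is modular, and combined with Faltings' isogeny theorem this produces a surjection $J_0(\nu_S)\twoheadrightarrow E$ on generic fibers; equivalently, the $\QQ$-isogeny class of $E$ corresponds to a Galois orbit consisting of a single newform $f_E\in S_2(\Gamma_0(N_E))^{\textnormal{new}}$ with rational integer Fourier coefficients. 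Distinct $\QQ$-isogeny classes give distinct such newforms, and via the Atkin-Lehner decomposition of Section \ref{sec:cuspforms} these newforms embed as linearly independent elements of $S_2(\Gamma_0(\nu_S))$. Consequently, the number of $\QQ$-isogeny classes in question is at most
\begin{equation*}
\sum_{M\mid\nu_S}\dim S_2(\Gamma_0(M))^{\textnormal{new}}\leq \dim S_2(\Gamma_0(\nu_S))=g_0(\nu_S),
\end{equation*}
the genus of $X_0(\nu_S)$. Invoking (\ref{eq:destimate}) I would then obtain $g_0(\nu_S)\leq d/12$ with $d=\nu_S\prod_{p\mid\nu_S}(1+1/p)$.

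Second, I would invoke Kenku's theorem \cite{kenku:ellisogenies} (building on Mazur's classification of cyclic rational isogenies), which asserts that any $\QQ$-isogeny class of elliptic curves over $\QQ$ contains at most $8$ distinct $\QQ$-isomorphism classes. Multiplying the two bounds gives
\begin{equation*}
N(S)\leq 8\,g_0(\nu_S)\leq \tfrac{8}{12}\nu_S\prod_{p\mid\nu_S}(1+1/p)=\tfrac{2}{3}\nu_S\prod_{p\mid\nu_S}(1+1/p),
\end{equation*}
which is the claimed inequality. There is no genuine analytic obstacle: the argument is essentially a packaging of modularity and Kenku's bound together with the classical formula $g_0(\nu_S)\leq d/12$. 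What deserves emphasis is that the linear growth of $g_0(\nu_S)$ in $\nu_S$ is precisely what produces the polynomial (rather than exponential) improvement over the height-based bound implicit in Corollary \ref{cor:shaf}, and that the numerical factor $\tfrac{2}{3}$ lines up exactly with $8/12$, leaving no slack between the two ingredients.
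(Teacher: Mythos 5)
Your proof is correct and follows essentially the same strategy as the paper's: the Shimura--Taniyama conjecture places every relevant curve inside $J_0(\nu_S)$, the number of $\QQ$-isogeny classes is bounded by the genus of $X_0(\nu_S)$, which is at most $\frac{1}{12}\nu_S\prod_{p\mid\nu_S}(1+1/p)$ by (\ref{eq:destimate}), and Kenku's bound of $8$ per class finishes the count. The only difference is cosmetic: you bound the number of isogeny classes by counting rational newforms of level dividing $\nu_S$ via the Atkin--Lehner decomposition, whereas the paper counts $\QQ$-simple factors of $J_0(\nu_S)$ via Poincar\'e reducibility; these are equivalent bookkeepings of the same decomposition and yield the identical constant $\tfrac{2}{3}=8/12$.
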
 
We now discuss bounds for $N(S)$ in the literature. The estimate $N(S)\ll_\epsilon N_S^{1/2+\epsilon}$ was obtained by Brumer-Silverman \cite[Theorem 1]{brsi:number} and Poulakis established in \cite[Theorem 2]{poulakis:corrigendum} an explicit upper bound for $N(S)$. One observes that Proposition \ref{prop:shaf} is better than Poulakis' result when $N_S\leq 2^{65}$,
and is worse when $N_S$ is sufficiently large. However, for sufficiently large $N_S$ the actual best estimate is due to Ellenberg, Helfgott and Venkatesh \cite{heve:integralpoints,elve:classgroup}. Namely, on refining the proof of \cite[Theorem 4.5]{heve:integralpoints} with the upper bound in \cite[Proposition 3.4]{elve:classgroup}, one obtains 
\begin{equation}\label{eq:ehv}
N(S)\ll N_S^{0.1689}. 
\end{equation}
Furthermore, Brumer-Silverman \cite{brsi:number} observed that one can considerably improve (\ref{eq:ehv}) on assuming $(*)$: If $E$ is an elliptic curve over $\QQ$, with vanishing $j$-invariant, then the $L$-function $L(E,s)$ of $E$ satisfies the ``Generalized Riemann Hypothesis" and the rank of $E(\QQ)$ is at most the order of vanishing of $L(E,s)$ at $s=1$. More precisely, \cite[Theorem 4]{brsi:number} gives that $(*)$ implies $N(S)\ll_\epsilon N_S^\epsilon$;  notice that the ``Generalized Riemann Hypothesis" together with the ``Birch and Swinnerton-Dyer conjecture" implies $(*)$.

We point out that the methods of Brumer-Silverman,  Helfgott-Venkatesh and Poulakis are entirely different from the method which is used in the proof of Proposition \ref{prop:shaf}. For example, to obtain Diophantine finiteness, they use the following tools: Brumer-Silverman \cite{brsi:number} apply an estimate of Evertse-Silverman \cite{evsi:uniformbounds} based on Diophantine approximation, 
Helfgott-Venkatesh \cite{heve:integralpoints} use a bound of Hajdu-Herendi \cite{hahe:elliptic} relying on the theory of logarithmic forms, and Poulakis \cite{poulakis:corrigendum} applies an estimate of Evertse \cite{evertse:sunits} based again on Diophantine approximation.

\subsection{Proof of Propositions \ref{prop:he} and \ref{prop:shaf}}\label{sec:proofhe}

Our main tool in the proof of Proposition \ref{prop:he} is the Shimura-Taniyama conjecture. Building on the work of Wiles \cite{wiles:modular} and Taylor-Wiles \cite{taywil:modular},  Breuil-Conrad-Diamond-Taylor \cite{breuil:modular} proved this conjecture for all elliptic curves over $\QQ$. The modularity result in \cite{breuil:modular} implies the following  version of the Shimura-Taniyama conjecture.
For any integer $N\geq 1$, let $X_0(N)$ be the modular curve defined in Section \ref{sec:cuspforms}. Suppose $E$ is an elliptic curve over $\QQ$ with conductor $N=N_E$. 
Then there exists a finite morphism
\begin{equation}\label{eq:st}
X_0(N)\to E
\end{equation}
of curves over $\QQ$. We mention that the implication \cite{breuil:modular} $\Rightarrow$ (\ref{eq:st}) uses inter alia the Tate conjecture which was established by Faltings in \cite{faltings:finiteness}.

To prove Proposition \ref{prop:he} we use a strategy of Frey \cite{frey:linksulm}. In the first part, we apply Lemma \ref{lem:abisogvar} (ii) to pass to an elliptic curve  over $\QQ$, which is $\QQ$-isogenous to $E$ and which is an ``optimal quotient''. In the second part, we consider a formula which involves inter alia $h(E)$, the modular degree of the newform attached to $E$ by (\ref{eq:st}), and the ``Manin constant'' of $E$. In the third part, we estimate the quantities which appear in this formula. Here we use inter alia the bound for the modular degree in Lemma \ref{lem:moddeg} and a result of Edixhoven in \cite{edixhoven:manin} which says that the ``Manin constant'' of $E$ is an integer.

\begin{proof}[Proof of Proposition \ref{prop:he}]  Let $E$ be an elliptic curve over $\QQ$ with conductor $N=N_E$.   

1. The version of the Shimura-Taniyama conjecture in (\ref{eq:st}) gives a finite morphism
$\varphi: X_0(N)\to E$
of smooth projective curves over $\QQ$. We recall that $J_0(N)=\textnormal{Pic}^0(X_0(N))$ denotes the Jacobian of $X_0(N)$. By Picard functoriality, the morphism $\varphi$ induces a surjective $\QQ$-morphism of abelian varieties 
\begin{equation}\label{eq:j0nq}
\psi: J_0(N)\to E.
\end{equation}
Let $A$ be the identity component of the kernel of $\psi$. It is an abelian subvariety of $J_0(N)$. 
Thus, on using for example the standard argument via Poincar\'e's reducibility theorem, we obtain an elliptic curve $E'$ over $\QQ$ which is $\QQ$-isogenous to $E$ and a surjective morphism $\psi':J_0(N)\to E'$ of abelian varieties over $\QQ$ with kernel $A$.  
Since $E$ and $E'$ are $\QQ$-isogenous, it follows that $E'$ has conductor $N$ and Lemma \ref{lem:abisogvar} (ii) gives 
\begin{equation}\label{eq:mazurbound}
\left|h(E)-h(E')\right|\leq \frac{1}{2}\log 163.
\end{equation}
The kernel of  $\psi':J_0(N)\to E'$ is $A$, which is connected. An elliptic curve over $\QQ$ with this property is called an optimal quotient of $J_0(N)$ or a (strong) Weil curve. As in Section \ref{sec:modulardegree}, we denote by $\iota:X_0(N)\hookrightarrow J_0(N)$  the usual embedding which maps $\infty$ to the zero element of $J_0(N)$. To simplify the exposition we write $E$ and $\varphi$ for $E'$ and $\psi'\circ\iota$ respectively.

2. It is known by Frey \cite[p.45-47]{frey:linksulm} that the degree $\deg(\varphi)$ of $\varphi$ is related to $h(E)$.  
A precise relation can be established as follows. We denote by $\mathcal E$  the N\'eron model of $E$ over $B=\sp(\ZZ)$.
Since $\ZZ$ is a principal ideal domain, the line bundle $\omega=\omega_{\mathcal E/B}$ on $B$ from Section \ref{sec:heights} takes the form $\omega\cong\alpha \ZZ$ with a global differential one form $\alpha$ of $\mathcal E$. 
Then, on recalling the definition of the relative Faltings height $h(E)$ in Section \ref{sec:heights}, we compute
$$h(E)=-\frac{1}{2}\log\left(\frac{i}{2}\int_{E(\mathbb C)}\alpha\wedge \overline{\alpha}\right).$$
As in Section \ref{sec:cuspforms}, we denote by $S_2(\Gamma_0(N))$ the cusp forms of weight 2 for $\Gamma_0(N)$  and by $(\cdot \,,\cdot)$ the Petersson inner product on $S_2(\Gamma_0(N))$. The pullback $\varphi^*\alpha$ of $\alpha$ under $\varphi$ defines a differential on $X_0(N)$.
It takes the form $\varphi^*\alpha=c\cdot 2\pi i  fdz$ with $c\in \QQ^\times$  and $f\in S_2(\Gamma_0(N))$ a newform of level $N$ with Fourier coefficients $a_n(f)\in \ZZ$ for all $n\in \ZZ_{\geq 1}$. 
After adjusting the sign of $\alpha$, we may and do assume that $c$ is positive. 
The number $c$ is the Manin constant of the optimal quotient $E$.  By definition, it holds  $$(f,f)=\frac{i}{2}\int_{X_0(N)(\CC)}fdz\wedge \overline{fdz}.$$ 
The elliptic curve $E_f$ over $\QQ$, which is associated to $f$ in (\ref{def:ef}), is $\QQ$-isogenous to $E$. Indeed, this follows for example from \cite[Korollar 2]{faltings:finiteness} since by construction  the $L$-functions of $E$ and $f$, of $f$ and $E_f$, and thus of $E$ and $E_f$, have the same Euler product factors for all but finitely many primes. 
Furthermore, $E$ is an optimal quotient of $J_0(N)$ by 1. and $E_f$ is an optimal quotient of $J_0(N)$, since the kernel $I_fJ_0(N)$ (see Section \ref{sec:modulardegree}) of the natural projection $J_0(N)\to E_f$ is connected. 
Therefore it follows that the modular degree $m_f$ of $f$, defined in Section \ref{sec:modulardegree}, satisfies $m_f=\deg(\varphi).$ 
Then, on using that $\varphi^*\alpha=c\cdot 2\pi ifdz$ and on  integrating over $X_0(N)(\mathbb C)$, we see that
the change of variable formula and the above displayed formulas for $h(E)$ and $(f,f)$ lead to 
\begin{equation}\label{eq:fund}
h(E)=\frac{1}{2}\log  m_f-\frac{1}{2}\log (f,f)-\log (2\pi c).
\end{equation}
We now estimate the quantities which appear on the right hand side of this formula.

3.  It follows from \cite[Lemme 3.7]{abul:comparison}, or from \cite[p.262]{silverman:arithgeo}, that $(f,f)\geq e^{-4\pi}/(4\pi).$  
Further, Edixhoven showed in \cite[Proposition 2]{edixhoven:manin} that the Manin constant $c$ of the optimal quotient $E$ of $J_0(N)$ satisfies $c\in\ZZ$ and thus we obtain that $\log (2\pi c)\geq \log (2\pi)$. Then the above lower bound for $(f,f)$,  the formula (\ref{eq:fund}) and the estimate for $m_f$ in Lemma \ref{lem:moddeg} (i) prove Proposition \ref{prop:he} for the optimal quotient $E$ of $J_0(N)$. Finally, on using the reduction in 1. and  (\ref{eq:mazurbound}), we deduce Proposition \ref{prop:he} for all elliptic curves over $\QQ$.
\end{proof}

The main ingredients for the following proof of Proposition \ref{prop:shaf} are the Shimura-Taniyama conjecture and a result of Mazur-Kenku \cite{kenku:ellisogenies} on $\QQ$-isogeny classes of elliptic curves over $\QQ$.

\begin{proof}[Proof of Proposition \ref{prop:shaf}]
Let $E$ be an elliptic curve over $\QQ$, with good reduction over $S$. We write $N_E$ for the conductor of $E$, and we denote by $J_0(N)=\textnormal{Pic}^0(X_0(N))$ the Jacobian of the modular curve $X_0(N)$ for $N\geq 1$ (see Section \ref{sec:modular}). There exists a finite morphism $X_0(\nu_S)\to X_0(N_E)$ of curves over $\QQ$, since $N_E$ divides $\nu_S$ by (\ref{eq:cond23}). 
Picard functoriality gives a surjective morphism $J_0(\nu_S)\to J_0(N_E)$ of abelian varieties over $\QQ$, and as in (\ref{eq:j0nq}) we see that the Shimura-Taniyama conjecture provides that $E$ is a $\QQ$-quotient of $J_0(N_E)$. 
Thus there exists a surjective morphism $$J_0(\nu_S)\to E$$ of abelian varieties over $\QQ$.
Then Poincar\'e's reducibility theorem shows that $E$ is $\QQ$-isogenous to a $\QQ$-simple ``factor'' of $J_0(\nu_S)$. 
Furthermore, the dimension of $J_0(\nu_S)$ coincides with the genus $g$ of the modular curve $X_0(\nu_S)$, and the abelian variety $J_0(\nu_S)$ has at most $g$ $\QQ$-simple  ``factors'' up to $\QQ$-isogenies. 
Therefore we see that there exists a set of elliptic curves over $\QQ$ with the following properties: 
This set has cardinality at most $g$ and for any elliptic curve $E$ over $\QQ$, with good reduction over $S$, there exists an elliptic curve in this set which is $\QQ$-isogenous to $E$. 
Further,  Mazur-Kenku \cite[Theorem 2]{kenku:ellisogenies} give that each $\QQ$-isogeny class of elliptic curves over $\QQ$ contains at most 8 distinct $\QQ$-isomorphism classes of elliptic curves over $\QQ$. 
On combining the results collected above, we deduce that $N(S)\leq 8g$ and then the upper bound for $g$  in (\ref{eq:destimate}) implies  Proposition \ref{prop:shaf}.
\end{proof}

In the following section, we shall combine Proposition \ref{prop:he} or Proposition \ref{prop:shaf} with the Par{\v{s}}in constructions from Section \ref{sec:parshin} to obtain explicit Diophantine finiteness results.
\section{Integral points on moduli schemes}\label{sec:finiteness}
In the first part of this section, we give in Theorem \ref{thm:ms} an effective finiteness 
result for integral points on moduli schemes of elliptic curves. In the second and third part, we refine the method of Theorem \ref{thm:ms} for the moduli schemes  corresponding to $\mathbb P^1-\{0,1,\infty\}$ and to once punctured Mordell elliptic curves. This leads to effective versions of Siegel's theorem for $\mathbb P^1-\{0,1,\infty\}$ and once punctured Mordell elliptic curves, which provide explicit height upper bounds for the solutions of $S$-unit and Mordell equations. We also discuss additional applications. In particular, we consider cubic Thue equations. 

\subsection{Moduli schemes}

To state our result for integral points on moduli schemes of elliptic curves, we use the notation and terminology which was introduced in Section \ref{sec:parshin}. 

Let $T$ and $S$ be non-empty open subschemes of $\sp(\ZZ)$, with $T\subseteq S$. We write $\nu_T=12^3\prod p^2$ with the product taken over all rational primes $p$ not in $T$. For any moduli problem $\mathcal P$ on $(Ell)$, we denote by $\lvert \mathcal P\rvert_T$ the maximal (possibly infinite) number of distinct level $\mathcal P$-structures on an arbitrary elliptic curve over $T$; see (\ref{def:ps}).  We suppose that $Y=M(\mathcal P)$ is a moduli scheme of elliptic curves, which is defined over $S$. Let $h_M$ be  the pullback  of the relative Faltings height by  the canonical forget $\mathcal P$-map, defined in (\ref{def:height}).

\begin{theorem}\label{thm:ms}
The following statements hold. 
\begin{itemize}
\item[(i)] The cardinality of $Y(T)$ is at most $\frac{2}{3}\lvert \mathcal P \rvert_T\nu_T\prod (1+1/p)$ with the product taken over all rational primes $p$ which divide $\nu_T$.
\item[(ii)] If $P\in Y(T)$, then $h_M(P)\leq \frac{1}{4}\nu_T(\log \nu_T)^2+9$.
\end{itemize}
\end{theorem}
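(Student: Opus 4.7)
The plan is to combine Lemma~\ref{lem:moduli} with Propositions~\ref{prop:he} and \ref{prop:shaf}, which have been tailored for this purpose. As a first common step for both parts, I would apply Lemma~\ref{lem:moduli} to the moduli scheme $Y=M(\mathcal P)$ to obtain the forget $\mathcal P$-map
$$\phi:Y(T)\to M(T)$$
whose fibers have cardinality at most $\lvert\mathcal P\rvert_T$. This reduces (i) to bounding $\lvert M(T)\rvert$ and (ii) to bounding the relative Faltings height of the elliptic curves over $T$ which arise as values of $\phi$.

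For (i), the plan is to identify $M(T)$ with a subset of the set $N(T)$ of $\QQ$-isomorphism classes of elliptic curves over $\QQ$ with good reduction over $T$, and then to invoke Proposition~\ref{prop:shaf}. Given an elliptic curve $E$ over $T$, its generic fiber $E_\QQ$ has good reduction at each closed point of $T$, since $E\to T$ is smooth and proper. Conversely, if two elliptic curves $E,E'$ over $T$ have $\QQ$-isomorphic generic fibers, then each of $E$ and $E'$ is the (unique) N\'eron model of that common generic fiber over $T$, so the isomorphism extends to a $T$-isomorphism. Hence $[E]\mapsto [E_\QQ]$ gives an injection $M(T)\hookrightarrow N(T)$, and Proposition~\ref{prop:shaf} together with Lemma~\ref{lem:moduli} yields (i).

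For (ii), fix $P\in Y(T)$ and let $E$ be an elliptic curve over $T$ representing $\phi(P)\in M(T)$. By the definition (\ref{def:height}) of $h_M$ and of the relative Faltings height, $h_M(P)=h(E)=h(E_\QQ)$. The generic fiber $E_\QQ$ has good reduction at every $v\in T$, so its conductor exponent $f_v$ vanishes there, while (\ref{eq:cond23}) gives $f_v\le 2$ for $v\notin T$ with $v\nmid 6$, $f_v\le 8$ for $v=2$, and $f_v\le 5$ for $v=3$. Multiplying these bounds shows that $N_{E_\QQ}$ divides $\nu_T=12^3\prod p^2$ (product over rational primes $p\notin T$), and Proposition~\ref{prop:he} applied to $E_\QQ$ then gives
$$h_M(P)=h(E_\QQ)\le \tfrac{1}{4}N_{E_\QQ}(\log N_{E_\QQ})^2+9\le \tfrac{1}{4}\nu_T(\log\nu_T)^2+9.$$

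The hard inputs have already been absorbed into Propositions~\ref{prop:he} and \ref{prop:shaf} (via the Shimura-Taniyama conjecture, the modular-degree bound of Lemma~\ref{lem:moddeg}, and the Mazur-Kenku theorem), so the only genuine point of care in this theorem is the schematic bookkeeping above, namely the passage from isomorphism classes of elliptic curves over $T$ to $\QQ$-isomorphism classes over $\QQ$ with good reduction over $T$ and the verification that $N_{E_\QQ}\mid \nu_T$.
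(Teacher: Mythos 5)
Your proposal is correct and follows essentially the same route as the paper: Lemma \ref{lem:moduli} for the fiber bound, the N\'eron-model identification of $M(T)$ with ($\QQ$-isomorphism classes of) elliptic curves over $\QQ$ with good reduction over $T$ combined with Proposition \ref{prop:shaf} for (i), and the conductor-exponent bookkeeping via (\ref{eq:cond23}) giving $N_{E_\QQ}\mid\nu_T$ followed by Proposition \ref{prop:he} for (ii). The only cosmetic difference is that you establish an injection $M(T)\hookrightarrow N(T)$ where the paper records a bijection, but the injection is all that the upper bound requires.
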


We refer to  Section \ref{sec:modintro} for a  discussion of this theorem. In addition, we now mention that for many classical moduli problems $\mathcal P$ on $(Ell)$ it is possible to express $\lvert \mathcal P\rvert_T$ in terms of more conventional data, where $T$ is an arbitrary scheme which is connected.  
For example, if $\mathcal P_N$ is the ``naive" level $N$ moduli problem on $(Ell)$ considered in Section \ref{sec:parshin}, then (\ref{eq:compgn}) shows that $\lvert\mathcal P_N\rvert_T$ is an explicit function in terms of the level $N\geq 1$.

It is quite difficult, when not impossible, to compare Theorem \ref{thm:ms} with quantitative or effective  finiteness results in the literature, since these results hold in different settings. One can mention for example the quantitative result of Corvaja-Zannier \cite{coza:siegel} for hyperbolic curves which relies on Schmidt's subspace theorem, or the effective result of Bilu \cite{bilu:modular} for certain modular curves which is based on the theory of logarithmic forms. 

\begin{proof}[Proof of Theorem \ref{thm:ms}]
To prove (i) we denote by $M(T)$ be the set of isomorphism classes of elliptic curves over $T$. Let $M(T)_\QQ$ be the set of $\QQ$-isomorphism classes of elliptic curves over $\QQ$, with good reduction over $T$. We now show that there exists a bijection $$M(T)\cong M(T)_\QQ.$$ 
Any elliptic curve over $\QQ$ has good reduction over $T$ if and only if it is the generic fiber of an elliptic curve over $T$. 
Further, any elliptic curve over $T$ is the N\'eron model of its generic fiber (see for example \cite[p.15]{bolura:neronmodels}), and $M(T)$ is in bijection with the set of isomorphism classes of $T$-schemes generated by elliptic curves over $T$. Hence the N\'eron mapping property proves that $M(T)\cong M(T)_\QQ$ 
and thus Proposition \ref{prop:shaf} implies $$\lvert M(T)\rvert\leq \frac{2}{3}\nu_T\prod(1+1/p)$$ with the product taken over all rational primes $p$ which divide $\nu_T$. It follows from Lemma \ref{lem:moduli} that $\lvert Y(T)\rvert\leq \rvert \mathcal P\rvert_T\lvert M(T)\rvert$ and then we deduce Theorem \ref{thm:ms} (i).

To show (ii) we take $P\in Y(T)$ and we write $[E]=\phi(P)$ for $\phi:Y(T)\to M(T)$ the canonical forget $\mathcal P$-map from Lemma \ref{lem:moduli}. The conductor $N_E$ of the generic fiber $E_\QQ$ of $E$ takes the form $N_E=\prod p^{f_p}$ with $f_p$ the conductor exponent of $E_\QQ$ at a rational prime $p$, see Section \ref{sec:cond}. It holds that $f_p\leq 2$ for $p\geq 5$ and (\ref{eq:cond23}) gives that $f_2\leq 8$ and $f_3\leq 5$. Furthermore, if $p\in T$ then we get that $f_p=0$ since $E_\QQ$ extends to an abelian scheme over $T$. On combining the above results, we deduce that $N_E\mid \nu_T$. An application of Proposition \ref{prop:he} with $E_\QQ$ gives that $h_M(P)\leq \frac{1}{4}N_E(\log N_E)^2+9$ which together with $N_E\leq \nu_T$ implies assertion (ii). This completes the proof of Theorem \ref{thm:ms}.
\end{proof}
On replacing in the proof of Theorem \ref{thm:ms} (i) the explicit estimate from Proposition \ref{prop:shaf} by the asymptotic bound (\ref{eq:ehv}) of Ellenberg-Helfgott-Venkatesh, we obtain the following  version of Theorem \ref{thm:ms} (i): If $Y=M(\mathcal P)$ is a moduli scheme, defined over $S$, then 
$$\lvert Y(T)\rvert \ll \lvert \mathcal P\rvert_TN_T^{0.1689}$$ 
for $N_T$ the product of all rational primes $p$ not in $T$. Furthermore, the discussion surrounding (\ref{eq:ehv}) shows that  the ``Birch and Swinnerton-Dyer conjecture" together with the ``Generalized Riemann Hypothesis" implies that for all $\epsilon>0$ there exists a constant $c(\epsilon)$, depending only on $\epsilon$, such that $\lvert Y(T)\rvert\leq c(\epsilon) \lvert \mathcal P\rvert_TN_T^{\epsilon}.$

 We notice that the complement of $S$ in $\sp(\ZZ)$ is a finite set of rational prime numbers. For the remaining of Section \ref{sec:finiteness}, we will adapt our notation to the classical number theoretic setting and in (\ref{sec:p1}-\ref{sec:thue}) the symbol $S$ will denote a finite set of rational prime numbers. 

\subsection{$\mathbb P^1-\{0,1,\infty\}$: $S$-unit equations}\label{sec:p1}

In the first part of this section, we briefly review alternative methods which give finiteness for integral points of $\mathbb P^1-\{0,1,\infty\}$, or equivalently for the number of solutions of $S$-unit equations. In the second and third part, we establish in Corollary \ref{cor:p1} an explicit upper bound in terms of $S$ for the height of the solutions of $S$-unit equations, and we compare this bound with the actual best results in the literature. In the last part, we discuss  upper bounds for the number of solutions of $S$-unit equations.

Let $S$ be a finite set of rational primes, let $N_S=\prod p$ with the product taken over all $p\in S$,
and let $\mathcal O^\times$ be the units of $\mathcal O=\ZZ[1/N_S]$. We recall the $S$-unit equation (\ref{eq:unit})
\begin{equation*}
x+y=1, \ \ (x,y)\in \mathcal O^\times\times\mathcal O^\times.
\end{equation*}
Before we apply the method of this paper to $S$-unit equations (\ref{eq:unit}), we briefly review in the following subsection alternative methods which give finiteness of (\ref{eq:unit}).
\subsubsection{Alternative methods}\label{subsec:p1altmethods}

The first finiteness proof for $S$-unit equations (\ref{eq:unit}) goes back to Mahler \cite{mahler:approx1}. He used the method of Diophantine approximations (Thue-Siegel). Another proof of Mahler's theorem was obtained by
Faltings,  whose general finiteness theorems in \cite{faltings:finiteness} cover in particular (\ref{eq:unit}). Faltings studied semi-simple $\ell$-adic Galois representations associated to abelian varieties.
Recently, Kim \cite{kim:siegel} gave a new finiteness proof of (\ref{eq:unit}). He used Galois representations associated to the unipotent \'etale and de Rham fundamental group of $\mathbb P^1-\{0,1,\infty\}$. 
The methods of Faltings, Kim and Thue-Siegel (Mahler) are a priori not effective. 
The first effective finiteness proof of (\ref{eq:unit}) was given\footnote{For instance, Coates explicit result \cite{coates:shafarevich}, which was published in 1970, implies an effective height upper bound for the solutions of (\ref{eq:unit}).} by Baker's method, using the theory of logarithmic forms; see for example Baker-W\"ustholz \cite{bawu:logarithmicforms}.
Another effective finiteness proof of (\ref{eq:unit}) is due to Bombieri-Cohen \cite{boco:effdioapp2}. They generalized Bombieri's method in \cite{bombieri:effdioapp}, which uses effective Diophantine approximations on the multiplicative group $\mathbb G_m$ (Thue-Siegel principle).
The methods of Baker and Bombieri both give  explicit upper bounds for the heights of the solutions of (\ref{eq:unit}) in terms of $S$, and they both allow to deal with $S$-unit equations in any number field.
So far, the theory of logarithmic forms, which was extensively polished and sharpened over the last 47 years, produces slightly better bounds than Bombieri's method.
On the other hand, Bombieri's method is relatively new and is essentially self-contained; see Bombieri-Cohen \cite{boco:effdioapp3}. 

\subsubsection{Effective resolution}\label{subsec:p1eff}

To state and discuss our effective result for $S$-unit equations we put  $n_S=2^7N_S$. Let $h(\beta)$ be the usual absolute logarithmic Weil height of any $\beta\in \QQ$. This height is defined for example in \cite[p.16]{bogu:diophantinegeometry}. We obtain the following corollary. 

\begin{corollary}\label{cor:p1}
Any solution $(x,y)$ of the $S$-unit equation $(\ref{eq:unit})$ satisfies $$h(x),h(y)\leq \frac{3}{2}n_S(\log n_S)^2+65.$$
\end{corollary}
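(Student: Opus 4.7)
The plan is to combine the explicit Par\v{s}in construction for $\mathbb{P}^1-\{0,1,\infty\}$ of Proposition \ref{prop:p1par} with the effective height-conductor inequality of Proposition \ref{prop:he}; once the book-keeping is arranged this is a mechanical substitution.

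First, I would identify an $S$-unit solution $(x,y)$ of (\ref{eq:unit}) with the integral point $P \in X(\sp\mathcal{O})$ on $X = \mathbb{P}^1_{\mathcal{O}} - \{0,1,\infty\}$ defined by the coordinate value $z(P) = x$; by the convention in Section \ref{sec:explicitconstr} this gives $h(P) = h(x)$, and since $(y,x)$ is also a solution of (\ref{eq:unit}) it suffices to bound $h(x)$.

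Next, I would apply Proposition \ref{prop:p1par}(iii) with $K = \mathbb{Q}$ (so that $B = \sp(\ZZ)$ has trivial class group), base scheme $\sp(\mathcal{O})$, and open subscheme $T = \sp(\mathcal{O}[1/2])$ on which $2$ is invertible. This produces an elliptic curve $E'$ over $\mathbb{Q}$ which extends to an elliptic curve over $T$ and which satisfies
\[
h(P) \leq 6\, h(E') + 11, \qquad N_{E'} \mid 2^{7} N_T.
\]
Since $B - T \subseteq \{2\} \cup S$, the refined estimate $N_{E'} \leq 2^{-1} N_T \cdot 2^{2 + 6 v(2)}$ obtained inside the proof of Proposition \ref{prop:p1par}(iii) shows, after absorbing the factor of $2$ from $N_T$ into the $f_2 \leq 8$ contribution at the prime $2$, that $N_{E'} \leq n_S = 2^{7} N_S$.

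Finally, I would invoke Proposition \ref{prop:he} for $E'$ to get
\[
h(E') \leq \tfrac{1}{4} N_{E'}(\log N_{E'})^2 + 9 \leq \tfrac{1}{4} n_S(\log n_S)^2 + 9,
\]
and substitute into the previous inequality to obtain
\[
h(x) = h(P) \leq 6\, h(E') + 11 \leq \tfrac{3}{2}\, n_S (\log n_S)^2 + 6\cdot 9 + 11 = \tfrac{3}{2}\, n_S(\log n_S)^2 + 65,
\]
with the same bound for $h(y)$ by the symmetry $(x,y) \mapsto (y,x)$. The main technical point to watch is the correct accounting of the $2$-adic contribution to $N_{E'}$ — the Legendre Par\v{s}in model forces us to invert $2$ on the target scheme $T$, and it is essential that the resulting extra factor at $2$ fits inside the prefactor $2^7$ of $n_S$. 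This is exactly what Proposition \ref{prop:p1par}(iii) delivers, and the rest of the argument is a direct substitution of the two key inequalities from Sections \ref{sec:parshin} and \ref{sec:hcell}.
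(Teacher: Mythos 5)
Your proposal is essentially the paper's own proof: the same two ingredients (Proposition \ref{prop:p1par}(iii) and Proposition \ref{prop:he}) are combined by the same direct substitution, and the arithmetic $6\cdot 9+11=65$ matches. The one point that does not quite close as written is the conductor bookkeeping when $2\notin S$. You take $T=\sp(\mathcal O[1/2])$, so that $N_T=2N_S$ whenever $2\notin S$, and the refined estimate from the proof of (iii) then gives $N_{E'}\leq 2^{-1}N_T\cdot 2^{8}=2^{7}N_T=2^{8}N_S=2n_S$, not $n_S$: the factor of $2$ you hope to ``absorb'' into the exponent $f_2\leq 8$ at the prime $2$ is already accounted for by the prefactor $2^{-1}$, so it cannot be absorbed a second time. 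The paper avoids this by first observing (in the discussion around (\ref{ass:2})) that the $S$-unit equation has no solution unless $2\in S$ — a parity argument: if $x,y$ are units in $\ZZ[1/N_S]$ with $N_S$ odd, then $x+y$ has even numerator and odd denominator, so cannot equal $1$ — and then taking $T=\sp(\mathcal O)$ itself, so that $N_T=N_S$ and $N_{E'}\mid 2^{7}N_S=n_S$ cleanly. With that one-line reduction added (or, equivalently, noting that the corollary is vacuous when $2\notin S$), your argument is complete and coincides with the paper's.
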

\begin{proof}
We use the notation and terminology of Section \ref{sec:parshin}. The discussion in (\ref{ass:2}) shows that we may and do assume that $2$ is invertible on $T=\sp(\mathcal O)$. Write $$X=\mathbb P^1_T-\{0,1,\infty\}=\sp(\mathcal O[z,1/(z(z-1))])$$ for $z$ an ``indeterminate". We suppose that $(x,y)$ satisfies   
(\ref{eq:unit}). Then we see that there exists $P\in X(T)$ with $z(P)=x$. Thus an application of Proposition \ref{prop:p1par} with $P$ and $T$  gives an elliptic curve $E'$ over $T$ that satisfies (write $E=E'$) $$h(x)\leq 6h(E)+11 \ \textnormal{ and } \ N_{E}\leq n_S.$$ Here $N_E$ is the conductor of $E$  and $h(E)$ is the relative Faltings height of $E$, see  Section \ref{sec:explicitconstr} for the definitions. Proposition \ref{prop:he} provides that $h(E)\leq \frac{1}{4}N_{E}(\log N_E)^2+9$. Then the displayed inequalities imply the claimed upper bound for $h(x)$, and then for $h(y)$ by symmetry. This completes the proof of Corollary \ref{cor:p1}.
\end{proof}
As already mentioned in the introduction, this corollary is an effective version of Frey's remark in \cite[p.544]{frey:ternary}. (We presented Corollary \ref{cor:p1} and its proof in various seminars and conferences in Princeton (Sept. 2011 and Jan. 2012), New York (Feb. 2012), Michigan (March 2012), Hong Kong (June 2012), Paris (Oct. 2012) and Zurich (May 2013). After we uploaded the present paper to the arXiv in October 2013, Hector Pasten informed us about his joint work with Ram Murty (\cite{mupa:modular}, submitted Nov. 2012) which was published online in July 2013 and which was presented including the proof in a seminar in Kingston (March 2012) and in a workshop in Toronto (Nov. 2012); we thank Hector Pasten for informing us about \cite{mupa:modular}. The main results of \cite{mupa:modular} independently establish versions of Corollary \ref{cor:p1} (and of Lemma \ref{lem:moddeg}, Proposition \ref{prop:he} and Corollary \ref{cor:de} which are used in the proof of Corollary \ref{cor:p1}) with effective bounds of the form $\ll N\log N$, while our corresponding bounds are of the (slightly) weaker form $\ll N(\log N)^2$. The method used in \cite{mupa:modular} is similar to our proof of Corollary \ref{cor:p1}. To conclude the discussion we point out that the results were obtained completely independently: We obtained the results of this paper without knowing anything of the related work of Hector Pasten and Ram Murty, and they obtained the results of \cite{mupa:modular} without knowing anything of our related work.)

Corollary \ref{cor:p1} allows in principle to find all solutions of any $S$-unit equation (\ref{eq:unit}). 
To discuss a practical aspect of Corollary \ref{cor:p1}, we observe that  any $u\in \mathcal O^\times$ satisfies $u=\prod p^{u_p}$  with the product taken over all $p\in S$ and $u_p=\textnormal{ord}_p(u)$. Therefore any $S$-unit equation may be viewed as an exponential Diophantine equation of the form $$\prod_{p\in S} p^{x_p}+\prod_{p\in S} p^{y_p}=1, \ \  ((x_p),(y_p))\in \ZZ^s\times \ZZ^s$$ 
for $s=\lvert S\rvert$. If $((x_p),(y_p))$ satisfies this exponential Diophantine equation, then 
Corollary \ref{cor:p1} implies that $\max_{p\in S} \lvert x_p\rvert$ and  $\max_{p\in S} \lvert y_p\rvert$ are at most $\frac{3}{2\log 2}n_S(\log n_S)^2+94$. On using additional tricks, we will improve in \cite{vkma:computation} the absolute constants $\frac{3}{2\log 2}$ and $94$  and we will transform the proof of Theorem \ref{cor:p1} into a practical algorithm to solve $S$-unit equations. 

\subsubsection{Comparison to known results}

Next, we compare Corollary \ref{cor:p1} with the actual best effective results in the literature for (\ref{eq:unit}). We notice that (\ref{eq:unit}) has no solutions when $\lvert S\rvert=0$, and $(\frac{1}{2},\frac{1}{2})$, $(2,-1)$ and $(-1,2)$ are the only solutions of (\ref{eq:unit}) when $\lvert S\rvert=1$. 
Further, we see that if (\ref{eq:unit}) has a solution, then $2\in S$. Thus, for the purpose of the comparison, we may and do assume 
\begin{equation}\label{ass:2}
s=\lvert S\rvert\geq 2 \ \ \textnormal{ and } \ \ 2\in S. 
\end{equation}
Let $(x,y)$ be a solution of the $S$-unit equation (\ref{eq:unit}). The actual best explicit height upper bound for $(x,y)$ in the literature is due to Gy\H{o}ry-Yu \cite{gyyu:sunits}. They used the  state of the art in the theory of logarithmic forms. In the case of (\ref{eq:unit}), where the number field is $\QQ$, their estimate in \cite[Theorem 2]{gyyu:sunits} becomes
$$h(x),h(y)\leq 2^{10s+22}s^4q\prod\log p$$
with the product taken over all rational primes $p\in S-\{q\}$ for $q=\max S$. The right hand side of the displayed inequality is always bigger than $2^{47}$.
Hence, we see that Corollary \ref{cor:p1} improves \cite{gyyu:sunits} for sets $S$ with small $N_S$, in particular for all sets $S$ with $N_S\leq 2^{30}.$
This improvement is significant for the practical solution of $S$-unit equations, see the  discussion at the end of Section \ref{subsec:p1eff}. However, 
the result of Stewart-Yu \cite[Theorem 1]{styu:abc2}, based on the actual state of the art in the theory of logarithmic forms, gives  $$h(x),h(y)\ll N_S^{1/3}(\log N_S)^3.$$ 
We observe that this inequality of Stewart-Yu is better than Corollary \ref{cor:p1} for all sets $S$ with sufficiently large $N_S$. This concludes our comparison.

\subsubsection{Number of solutions}

To discuss explicit upper bounds for the number of solutions of $S$-unit equations (\ref{eq:unit}), we recall that $n_S=2^7N_S$. In the special case of the moduli scheme $\mathbb P^1_{\ZZ[1/2]}-\{0,1,\infty\}$, one can refine the proof of Theorem \ref{thm:ms} and one obtains the following result. 

\begin{corollary}\label{cor:p1quant}
The $S$-unit equation $(\ref{eq:unit})$ has at most $4n_S\prod_{p\in S} (1+1/p)$ solutions.
\end{corollary}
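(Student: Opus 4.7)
The plan is to refine the counting argument behind Theorem \ref{thm:ms}(i) using the explicit Legendre Par\v{s}in construction of Proposition \ref{prop:p1par}. The key gain is that the coarse conductor bound $\nu_T = 12^3 N_S^2$ is replaced by the much sharper $n_S = 2^7 N_S$ available here because $\sp(\ZZ)$ has trivial class group and the Legendre moduli problem places essentially no restriction on bad reduction away from $2$.

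After discarding the trivial case $2 \notin S$ (in which (\ref{eq:unit}) has no solutions), I would set $T = \sp(\mathcal{O})$, identify the solutions of the $S$-unit equation with $X(T)$ for $X = \mathbb{P}^1_T - \{0,1,\infty\}$ via $(x,y) \leftrightarrow z(P) = x$, and study the map $\phi \colon X(T) \to \el(T)$ furnished by Proposition \ref{prop:p1par}. First I would verify that each fiber of $\phi$ has at most $6$ elements: if $\phi(P) = \phi(P')$ then the fiber analysis in the proof of Proposition \ref{prop:p1par} shows $z(P') = (z_1-z_2)/(z_3-z_2)$ for some ordered triple of distinct elements $z_1, z_2, z_3 \in \{0,1,z(P)\}$, and there are exactly $3! = 6$ such triples. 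Next I would bound the image of $\phi$. Since $\ZZ$ has class number one, Proposition \ref{prop:p1par}(iii) gives $N_E \mid n_S$ for every $[E] \in \phi(X(T))$. Running the argument of Proposition \ref{prop:shaf} with $n_S$ in place of $\nu_S$, the Shimura-Taniyama conjecture combined with the degeneracy morphism $X_0(n_S) \to X_0(N_E)$ produces a surjective $\QQ$-morphism $J_0(n_S) \to E_\QQ$, so Poincar\'e reducibility places $E_\QQ$ in one of at most $g = g(X_0(n_S))$ $\QQ$-isogeny classes, and Mazur-Kenku \cite{kenku:ellisogenies} bounds each such class by $8$. Hence $|\phi(X(T))| \leq 8g$.

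Combining with the genus bound $g \leq n_S/12 \cdot \prod_{p \mid n_S}(1+1/p)$ from (\ref{eq:destimate}) and noting that $\{p : p \mid n_S\} = S$ because $2 \in S$, I would conclude
\[
|X(T)| \;\leq\; 6 \cdot 8g \;\leq\; 4 n_S \prod_{p \in S}(1 + 1/p),
\]
which is the claimed bound. The main obstacle is purely bookkeeping: one must confirm that the sharp fiber count of $6$, the Mazur-Kenku factor $8$, and the $1/12$ from the genus estimate collapse exactly to the clean constant $4$, and that Proposition \ref{prop:p1par}(iii) is invoked with the correct numerical exponent $2^{7d} = 2^7$ (since $d = [\QQ:\QQ] = 1$). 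No ingredient beyond those already used for Theorem \ref{thm:ms}(i) and Proposition \ref{prop:shaf} is needed; the improvement over the crude specialization of Theorem \ref{thm:ms}(i) (which would give a bound quadratic in $N_S$) comes entirely from the refined conductor estimate of Proposition \ref{prop:p1par}(iii).
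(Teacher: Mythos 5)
Your proposal is correct and follows essentially the same route as the paper's own proof: reduce to $2\in S$, identify solutions with points of the Legendre moduli scheme, bound the fibers of the forgetful map by $6$, show the image lands in elliptic curves of conductor dividing $n_S$ via Proposition \ref{prop:p1par}(iii), and then rerun the Shimura--Taniyama plus Mazur--Kenku argument of Proposition \ref{prop:shaf} with $n_S$ in place of $\nu_S$. The only cosmetic difference is that you count the six fibers directly via the M\"obius orbit of $\lambda$ rather than invoking $\lvert\mathcal P\rvert_T\leq 6$ for the Legendre moduli problem from Lemma \ref{lem:moduli}; these are the same fact.
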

\begin{proof}
We  use the terminology and notation introduced in Section \ref{sec:parshin}. The discussion in (\ref{ass:2}) shows that we may and do assume that $2$ is invertible on $T=\sp(\mathcal O)$. Then there exists a bijection between  the set of solutions of the $S$-unit equation (\ref{eq:unit}) and $Y(T)$, where $$Y=\mathbb P^1_{\ZZ[1/2]}-\{0,1,\infty\}.$$   
We now estimate the cardinality of $Y(T)$. The remark at the end of Section \ref{subsec:s-unit} shows that $Y=M(\mathcal P)$ is a moduli scheme of elliptic curves, defined over $\sp(\ZZ[1/2])$, where $\mathcal P=[Legendre]$ is the Legendre moduli problem on $(Ell)$. 
Thus Lemma \ref{lem:moduli} gives a map $$\phi:Y(T)\to M(T),$$ with all fibers having cardinality at most $\lvert \mathcal P\rvert_T$. Here $\lvert \mathcal P\rvert_T$ is defined in (\ref{def:ps}) and $M(T)$ is the set of isomorphism classes of elliptic curves over $T$.
The arguments of Proposition \ref{prop:p1par} (iii) and of Theorem \ref{thm:ms} imply that the cardinality of $\phi(Y(T))$ is at most the number of $\QQ$-isomorphism classes of elliptic curves over $\QQ$, with conductor dividing $n_S=2^7N_S$. 
Therefore, on replacing $\nu_S$ by $n_S$ in the proof of Proposition \ref{prop:shaf}, we deduce $$\lvert \phi(Y(T))\rvert\leq \frac{2}{3}n_S\prod_{p\in S} (1+1/p).$$ 
Here we used that $2\in S$.
It follows that $ Y(T)$ has at most $4n_S\prod_{p\in S} (1+1/p)$ elements, since the fibers of $\phi$ have cardinality at most $\lvert \mathcal P\rvert_T\leq 6$. Then we conclude Corollary \ref{cor:p1quant}.
\end{proof}

We now compare Corollary \ref{cor:p1quant} to results in the literature. Evertse \cite[Theorem 1]{evertse:sunits} used  the method of Diophantine approximations to prove that any $S$-unit equation (\ref{eq:unit}) has at most $3\cdot 7^{3+2\lvert S\rvert}$ solutions. We mention that Evertse's result holds for more general unit equations in any number field, and it provides, as far as we know, the actual best upper bound in the literature for the number of solutions of (\ref{eq:unit}). 
Further, we see that Evertse's result is considerably better than Corollary \ref{cor:p1quant} for almost all sets $S$, since $3\cdot 7^{3+2\lvert S\rvert}\ll_\epsilon n_S^\epsilon.$ Notice there are sets $S$ for which Corollary \ref{cor:p1quant} improves \cite[Theorem 1]{evertse:sunits}. For example, if $S\subseteq \{2,3,5,\dotsc, 83, 89\}$ and if $S$ satisfies the reasonable assumption (\ref{ass:2}), then we observe that Corollary \ref{cor:p1quant} is better than \cite[Theorem 1]{evertse:sunits}.

\subsection{Once punctured Mordell elliptic curves: Mordell equations}\label{sec:mordell}

In the first part of this section, we briefly review alternative methods which give finiteness for integral points on once punctured Mordell elliptic curves, or equivalently for the number of $S$-integer solutions of Mordell equations. In the second and third part, we state and prove Corollary \ref{cor:m} on Mordell equations and we compare it with the actual best effective results in the literature. In the fourth and fifth part, we refine a result of Stark and we discuss explicit upper bounds for the number of solutions of Mordell equations.

We continue to denote by $S$ an arbitrary finite set of rational prime numbers and we write $\mathcal O=\ZZ[1/N_S]$ for $N_S$ the product of all $p\in S$. For any nonzero $a\in\mathcal O$, we recall that Mordell's equation (\ref{eq:mordell}) is of the form
\begin{equation*}
y^2=x^3+a, \ \ (x,y)\in \mathcal O\times\mathcal O.
\end{equation*}
This Diophantine equation is a priori more difficult than $S$-unit equations (\ref{eq:unit}).  
Indeed, elementary transformations  reduce (\ref{eq:unit}) to (\ref{eq:mordell}), while the  known (unconditional) reductions of (\ref{eq:mordell}) to controlled $S$-unit equations require to solve (\ref{eq:unit}) over field extensions.

\subsubsection{Alternative methods}\label{subsec:maltmethods}

As already mentioned in the introduction, the resolution of Mordell's equation in $\ZZ\times \ZZ$ is equivalent to the classical problem of finding all perfect squares and perfect cubes with given difference. We refer to Baker's introduction of \cite{baker:mordellequation} for a discussion (of partial resolutions) of this classical problem, which goes back at least to Bachet 1621. Mordell \cite{mordell:1922,mordell:1923} showed that (\ref{eq:mordell}) has only finitely many solutions in $\ZZ\times \ZZ$. 
He reduced the problem to Thue equations and then he applied Thue's finiteness theorem which is based on Diophantine approximations. More generally, the completely different methods of Siegel, Faltings \cite{faltings:finiteness}  and Kim \cite{kim:siegel,kim:cm} give finiteness of (\ref{eq:mordell}). 
Siegel's method uses Diophantine approximations, and the methods of Faltings and Kim are briefly described in Section \ref{subsec:p1altmethods}. 
We mention that these methods, which in fact allow to deal with considerably more general Diophantine problems, are all a priori not effective. See also Bombieri \cite{bombieri:effdioapp} and Kim's discussions in \cite{kim:effective}.  
The first effective finiteness result  for solutions in $\ZZ\times\ZZ$ of Mordell's equation (\ref{eq:mordell}) was provided by Baker \cite{baker:mordellequation}. Baker's result is based on the theory of logarithmic forms. 

\subsubsection{Effective resolution}\label{subsec:effmordell}

We now state and prove our effective result for Mordell equations. We continue to denote by $h(\beta)$ the absolute logarithmic Weil height of any $\beta\in \QQ$. To measure the set $S$ and the nonzero number $a\in\mathcal O$, we use inter alia the quantity
\begin{equation*}
a_S=2^83^5N_S^2r_2(a), \ \ \ r_2(a)=\prod p^{\min(2,\textnormal{ord}_p(a))}
\end{equation*}
with the product taken over all rational primes $p\notin S$ with $\textnormal{ord}_p(a)\geq 1$. The following corollary allows in principle to find all solutions of any Mordell equation (\ref{eq:mordell}). 

\begin{corollary}\label{cor:m}
If $(x,y)$ satisfies Mordell's equation $(\ref{eq:mordell})$, then 
$$h(x),h(y)\leq h(a)+4a_S(\log a_S)^2.$$
\end{corollary}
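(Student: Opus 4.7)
The plan is to combine the Par\v{s}in construction of Proposition \ref{prop:mpar} with the height-conductor inequality of Proposition \ref{prop:he}, specialising to $K=\QQ$ so that $\kappa=0$ and $D_K=1$. Given a solution $(x,y)\in\mathcal O\times\mathcal O$ of (\ref{eq:mordell}), I would set $\tilde S=\sp(\mathcal O)$ and $T=\sp(\mathcal O[1/(6a)])\subseteq\tilde S$, so that $6a$ is invertible on $T$. The solution corresponds to a point $P\in Z(\tilde S)$ of the affine Mordell curve $Z$ with $h(P)=h(x)$, and Proposition \ref{prop:mpar} (parts (ii) and (iii)) produces an elliptic curve $E$ over $T$ satisfying
\[ N_E\leq 2^8\cdot 3^5\cdot N_S^2\cdot r_2(a) = a_S \]
together with $h(x)\leq \tfrac{1}{3}h(a)+8h(E)+2\log(\max(1,h_F(E)))+36$.

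Next I would feed $N_E\leq a_S$ into Proposition \ref{prop:he} to obtain $h(E)\leq \tfrac{1}{4}a_S(\log a_S)^2+9$; since $h_F(E)\leq h(E)$, the remaining log-term is of order $\log(a_S(\log a_S)^2)$. Substituting yields an inequality of the shape
\[ h(x)\leq \tfrac{1}{3}h(a)+2a_S(\log a_S)^2+C, \]
where $C$ collects the absolute constants and the $\log\log$ contribution. Because $a_S\geq 2^8\cdot 3^5$ forces $a_S(\log a_S)^2$ to dominate $C$ with large slack, I would relax $2$ to $4$ and $\tfrac{1}{3}h(a)$ to $h(a)$ to absorb the remainder, obtaining the stated bound $h(x)\leq h(a)+4a_S(\log a_S)^2$.

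For $h(y)$, the defining equation gives, via the standard Weil-height estimates, $h(y)\leq \tfrac{3}{2}h(x)+\tfrac{1}{2}h(a)+\tfrac{1}{2}\log 2$. It is crucial to substitute the \emph{sharper}, pre-relaxation form of the $h(x)$ bound rather than the already-loosened one; doing so produces $h(y)\leq h(a)+3a_S(\log a_S)^2+\text{lower-order}$, which again fits inside $h(a)+4a_S(\log a_S)^2$. The main obstacle in the argument is purely bookkeeping: verifying that the explicit constants $9,36,\tfrac{1}{2}\log 2$ together with the $\log(\max(1,h_F(E)))$ contribution all fit into the gap between the natural leading $3a_S(\log a_S)^2$ (for $y$) and the stated $4a_S(\log a_S)^2$. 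This is routine given the uniform lower bound $a_S\geq 2^8\cdot 3^5$, but it is the one place where the numerical constants supplied by Propositions \ref{prop:mpar} and \ref{prop:he} must be tracked with care.
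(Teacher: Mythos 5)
Your proposal is correct and follows exactly the route the paper takes: apply Proposition~\ref{prop:mpar} (parts (ii) and (iii)) with $K=\QQ$, $T=\sp(\mathcal O[1/(6a)])$ to produce $E$ over $T$ with $N_E\leq a_S$, feed that into Proposition~\ref{prop:he}, and absorb the leftover constants using $a_S\geq 2^8\cdot3^5$. The paper's own proof dispatches the $h(y)$ bound with the single remark ``since $y^2=x^3+a$''; your explicit estimate $h(y)\leq\tfrac{3}{2}h(x)+\tfrac{1}{2}h(a)+\tfrac{1}{2}\log 2$ and the observation that one must plug in the \emph{sharper} $h(x)$ bound (with leading coefficient $2$, not $4$) are exactly the bookkeeping the paper leaves to the reader, and they check out.
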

\begin{proof}
The proof is completely analogous to the proof of Corollary \ref{cor:p1}. We use the notation and terminology of Section \ref{sec:parshin}. Write $T=\sp(\mathcal O[1/(6a)])$ and define $$Z=\sp\bigl(\mathcal O[x_0,y_0]/(y_0^2-x_0^3-a)]\bigl)$$ for $x_0$ and $y_0$  ``indeterminates". We suppose that $(x,y)$ is a solution of   
(\ref{eq:mordell}). Then there exists a $T$-integral point $P\in Z(T)$ with $x_0(P)=x$ and thus an application of Proposition \ref{prop:mpar} with $K=\QQ$, $P$ and $T$  gives an elliptic curve $E$ over $T$ that satisfies $$h(x)\leq \frac{1}{3}h(a)+8h(E)+2\log\bigl(\max(1,h_F(E))\bigl)+36 \ \textnormal{ and } \ N_{E}\leq a_S.$$ Here $N_E$ denotes the conductor of $E$, and $h(E)$ and $h_F(E)$ denote the relative and the stable Faltings height of $E$ respectively; see  Section \ref{sec:explicitconstr} for the definitions. Proposition \ref{prop:he} provides that $h(E)\leq\frac{1}{4}N_{E}(\log N_E)^2+9$ and it holds that $h_F(E)\leq h(E)$. Therefore the displayed inequalities lead to the claimed estimate for $h(x)$, and then for $h(y)$ since $y^2=x^3+a$. This completes the proof of Corollary \ref{cor:m}. 
\end{proof}
We already pointed out in the introduction that Corollary \ref{cor:m} provides in particular an entirely new proof of Baker's classical result \cite[Theorem 1]{baker:mordellequation}. 
\subsubsection{Comparison to known results}\label{sec:effcompm}

In what follows, we compare our Corollary \ref{cor:m} with the actual best effective height upper bounds in the literature for the solutions of Mordell's equation (\ref{eq:mordell}). For this purpose, we notice that if $a\in \ZZ-\{0\}$ and if $\textnormal{rad}(a)=\prod_{p\mid a} p$ denotes the radical of $a$, then
\begin{equation}\label{eq:as}
r_2(a)\leq\lvert a\rvert \ \ \textnormal{ and } \ \ r_2(a)\mid \textnormal{rad}(a)^2.
\end{equation} 
Over the last 45 years, many authors improved the explicit bound provided by Baker \cite{baker:mordellequation}, using refinements of the theory of logarithmic forms; see  Baker-W\"ustholz \cite{bawu:logarithmicforms} for an overview. The actual best explicit upper bound is due to Hajdu-Herendi \cite{hahe:elliptic}, and  due to Juricevic \cite{juricevic:mordell} in the important special case $\mathcal O=\ZZ$. 

We first discuss the classical case $\mathcal O=\ZZ$. If  $\mathcal S=\{(10^{181},4), (10^{23},5), (10^{19},6)\}$ and if $a\in \ZZ-\{0\}$, then  Juricevic \cite{juricevic:mordell} gives that any solution $(x,y)\in \ZZ\times \ZZ$ of (\ref{eq:mordell}) satisfies $$h(x),h(y)\leq \min_{(m,n)\in \mathcal S} m\lvert a\rvert(\log \lvert a\rvert)^n.$$
On using (\ref{eq:as}), we see that Corollary \ref{cor:m} improves this inequality and therefore our corollary establishes the actual best result for (\ref{eq:mordell}) in the classical case $\mathcal O=\ZZ$. 

It remains to discuss the case of arbitrary $\mathcal O$. To state the rather complicated bound in \cite{hahe:elliptic}, we have to introduce some notation. As in \cite{hahe:elliptic}, we define
$$c_1=\frac{32}{3}\Delta^{\frac{1}{2}}(8+\frac{1}{2}\log \Delta)^4, \ \ \ c_2=10^4\cdot 256\cdot \Delta^{\frac{2}{3}}, \ \ \ \Delta=27\lvert a\rvert^2.$$ Write $c_S=7\cdot 10^{38s+86}(s+1)^{20s+35}q^{24}\max(1,\log q)^{4s+2}$ for $s=\lvert S\rvert$ and $q=\max S$.
If $s\geq 1$ and if $a\in \ZZ-\{0\}$, then   \cite[Theorem 2]{hahe:elliptic} of Hajdu-Herendi, which in fact holds more generally for any elliptic equation, gives that any solution $(x,y)$ of (\ref{eq:mordell}) satisfies
$$h(x),h(y)\leq c_Sc_1(\log c_1)^2(c_1+20sc_1+\log (ec_2)).$$
It follows from (\ref{eq:as}) that the dependence on $a\in \ZZ$ of Corollary \ref{cor:m} is of the form $\lvert  a\rvert (\log \lvert a\rvert)^2$, while \cite[Theorem 2]{hahe:elliptic} is of the weaker form $\lvert a \rvert^2(\log \lvert a\rvert)^{10}$. Further, on using again (\ref{eq:as}), we see that Corollary \ref{cor:m}  improves \cite[Theorem 2]{hahe:elliptic} for ``small'' sets $S$, in particular for all sets $S$ with $N_S\leq 2^{1200}$ or with $s\leq 12$. 
This improvement is significant for the practical resolution of Mordell equations (\ref{eq:mordell}), see for example \cite{vkma:computation}.
However, if $N_S\gg\lvert a\rvert$, then one can not say which bound is better. 
The point is that there are sets $S$ with $N_S\gg\lvert a\rvert$ for which our result is better than \cite[Theorem 2]{hahe:elliptic}, and vice versa. Finally, we  mention that (so far) all effective results for (\ref{eq:mordell}) in the literature are based on the theory of logarithmic forms, and this theory allows to deal with more general Diophantine equations over arbitrary number fields; see \cite{bawu:logarithmicforms}. This concludes our comparison.

\subsubsection{A refinement of Stark's theorem}

We now discuss a refinement of the following theorem of Stark \cite[Theorem 1]{stark:mordell}: If $a\in \ZZ-\{0\}$ then  any  $(x,y)\in \ZZ\times\ZZ$ with $y^2=x^3+a$ satisfies $$h(x),h(y)\ll_\epsilon \lvert a\rvert^{1+\epsilon},$$ 
where the implied constant is effective.
This classical estimate of Stark is based on the theory of logarithmic forms.
The following result is a direct consequence of Corollary \ref{cor:m}.

\begin{corollary}\label{cor:stark}
If $\epsilon>0$ is a real number, then there exists an effective constant $c$, depending only on $\epsilon$, such that any solution $(x,y)$ of Mordell's equation $(\ref{eq:mordell})$ satisfies $$h(x),h(y)\leq h(a)+c\cdot a_S^{1+\epsilon}.$$
\end{corollary}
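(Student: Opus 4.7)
The plan is to derive Corollary \ref{cor:stark} as a direct and essentially formal consequence of Corollary \ref{cor:m}. Recall that Corollary \ref{cor:m} already gives the explicit bound
\[
h(x),h(y)\leq h(a)+4a_S(\log a_S)^2
\]
for any solution $(x,y)$ of the Mordell equation \eqref{eq:mordell}, so the only task is to absorb the logarithmic factor $(\log a_S)^2$ into a small power $a_S^\epsilon$ of $a_S$.

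First I would invoke the elementary fact that for any real $\epsilon>0$, the function $t\mapsto (\log t)^2/t^\epsilon$ is continuous on $[1,\infty)$ and tends to $0$ as $t\to\infty$, hence attains an effectively computable maximum on $[1,\infty)$. Call this maximum $M(\epsilon)$; it depends only on $\epsilon$ and can, if desired, be written down explicitly by solving $\tfrac{d}{dt}\bigl((\log t)^2 t^{-\epsilon}\bigr)=0$, which yields $t=\exp(2/\epsilon)$ and consequently $M(\epsilon)=(2/\epsilon)^2 e^{-2}$. Since $a_S\geq 1$ by its very definition, we obtain the inequality $(\log a_S)^2\leq M(\epsilon)\, a_S^\epsilon$.

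Second, I would combine this with the estimate from Corollary \ref{cor:m} to get
\[
4a_S(\log a_S)^2\leq 4M(\epsilon)\, a_S^{1+\epsilon},
\]
so that setting $c=4M(\epsilon)$ produces an effective constant depending only on $\epsilon$ with $h(x),h(y)\leq h(a)+c\cdot a_S^{1+\epsilon}$, which is exactly the statement of Corollary \ref{cor:stark}. The constant $c$ is effective because $M(\epsilon)$ is, and because the constant $4$ in Corollary \ref{cor:m} is absolute.

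There is essentially no hard part here: the deduction is a purely analytic trick to convert the $N(\log N)^2$ growth in Corollary \ref{cor:m} into the weaker $N^{1+\epsilon}$ form that matches Stark's shape. If one wanted to refine Stark's original theorem numerically (i.e.\ beyond its qualitative statement), one would want to keep track of $M(\epsilon)$ explicitly, but for the corollary as stated this is not required.
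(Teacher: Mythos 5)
Your proposal is correct and matches the paper's approach: the paper states that Corollary \ref{cor:stark} is a direct consequence of Corollary \ref{cor:m}, and the intended deduction is exactly the elementary absorption of $(\log a_S)^2$ into $a_S^\epsilon$ that you carry out (your computation of the effective constant $M(\epsilon)=(2/\epsilon)^2e^{-2}$ is a correct, slightly more explicit version of this step).
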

On using (\ref{eq:as}) and the fact that $h(a)=\log \lvert a\rvert$ for $a\in \ZZ-\{0\}$, we see that Corollary \ref{cor:stark}  generalizes and refines Stark's theorem \cite[Theorem 1]{stark:mordell} discussed above.

We remark that Stewart-Yu \cite{styu:abc2} obtained an exponential version of the $abc$-conjecture $(abc)$, and $(abc)$ is equivalent to a certain upper bound for the height of the solutions of Mordell's equation (\ref{eq:mordell}); see for example  \cite[p.428]{bogu:diophantinegeometry}. However, by elementary reasons, all known links between  $(abc)$  and height upper bounds for the solutions of (\ref{eq:mordell}) do not work any more with exponential versions. Hence, at the time of writing, it is  not possible to improve Corollary \ref{cor:stark} on using exponential versions of $(abc)$. 

\subsubsection{Number of solutions}

Next, we  discuss explicit upper bounds for the number of solutions of Mordell's equation (\ref{eq:mordell}). In the special case of the moduli scheme  corresponding to  $(\ref{eq:mordell})$, one can refine the proof of Theorem \ref{thm:ms} and one obtains the following result.

\begin{corollary}\label{cor:mquant}
The number of solutions of $(\ref{eq:mordell})$ is at most $\frac{2}{3}a_S\prod_{p\mid a_S} (1+1/p)$ with the product taken over all rational primes $p$ which divide $a_S$.
\end{corollary}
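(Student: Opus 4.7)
The plan is to follow the proof of Corollary~\ref{cor:p1quant} step by step, replacing the Legendre Par\v{s}in construction for $S$-unit equations by the Mordell Par\v{s}in construction of Proposition~\ref{prop:mpar}. Identify solutions of (\ref{eq:mordell}) with $Z(\sp \mathcal O)$ for $Z=\sp\bigl(\mathcal O[x,y]/(y^2-x^3-a)\bigl)$ and set $T=\sp(\mathcal O[1/(6a)])$, so that $6a$ is invertible on $T$. Proposition~\ref{prop:mpar} then supplies a map $\phi:Z(\sp\mathcal O)\to \el(T)$, and since $\pm 1$ are the only 12th roots of unity in $K=\QQ$, part~(i) of that proposition says $\phi$ is \emph{injective}. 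This is exactly the feature that will let us avoid the extra factor $\lvert\mathcal P\rvert_T$ which appeared in Corollary~\ref{cor:p1quant}.

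Next, using the N\'eron model argument employed in the proof of Theorem~\ref{thm:ms}, identify $\el(T)$ with the set $M(T)_\QQ$ of $\QQ$-isomorphism classes of elliptic curves over $\QQ$ with good reduction over $T$, so that $|Z(\sp\mathcal O)|\le |\phi(Z(\sp\mathcal O))|\le |M(T)_\QQ|$. For any $[E]\in\phi(Z(\sp\mathcal O))$ I then verify the divisibility $N_E\mid a_S$: at $p=2$ and $p=3$ this is immediate from (\ref{eq:cond23}), which gives $f_2\le 8$ and $f_3\le 5$; at $p\in\Sigma$ with $p\ne 2,3$ we have $f_p\le 2$ covered by $N_\Sigma^2$; and at $p\mid a$ with $p\notin\Sigma$, $p\ne 2,3$, the argument inside Proposition~\ref{prop:mpar}~(iii), applied to a globally minimal Weierstrass model over $\ZZ$ (which exists since $K=\QQ$ has class number one, so one may take the $U'$-discussion with $D_K=1$), shows that additive reduction forces $v_p(a)\ge 2$; thus $p^{f_p}\mid p^{\min(2,v_p(a))}\mid r_2(a)\mid a_S$.

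Having $N_E\mid a_S$ in hand, run the proof of Proposition~\ref{prop:shaf} verbatim with $\nu_S$ replaced by $a_S$. The Shimura--Taniyama conjecture and Picard functoriality give a surjection $J_0(a_S)\to J_0(N_E)\to E$; Poincar\'e reducibility shows that each $[E]\in M(T)_\QQ$ lies in the $\QQ$-isogeny class of some $\QQ$-simple factor of $J_0(a_S)$, and there are at most $g=\dim J_0(a_S)$ such factors. Mazur--Kenku \cite{kenku:ellisogenies} bounds each $\QQ$-isogeny class by $8$ curves, and (\ref{eq:destimate}) gives $g\le d/12$ where $d=a_S\prod_{p\mid a_S}(1+1/p)$. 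Combining, $|M(T)_\QQ|\le 8g\le \tfrac{2}{3}a_S\prod_{p\mid a_S}(1+1/p)$, which via injectivity of $\phi$ yields the claimed bound.

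The only step that requires actual work beyond quoting earlier results is the divisibility $N_E\mid a_S$ in the additive-reduction case at primes $p\mid a$, $p\notin\Sigma$, $p\ne 2,3$; everything else is an immediate transcription of the arguments of Corollary~\ref{cor:p1quant} and Proposition~\ref{prop:shaf}. The factor $\tfrac23$ rather than $4$ (as in Corollary~\ref{cor:p1quant}) reflects precisely the injectivity of $\phi$ for Mordell equations, which in turn reflects the fact that a Mordell curve has no non-trivial level structure in the sense of~$[\Delta=b]$ once one fixes $K=\QQ$.
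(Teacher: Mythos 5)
Your proof is correct and follows essentially the same route as the paper's: use the injectivity of the Par\v{s}in map from Proposition~\ref{prop:mpar}(i) for $K=\QQ$, check that the image lands in elliptic curves with $N_E\mid a_S$ (via the arguments behind Proposition~\ref{prop:mpar}(iii) and the N\'eron-model identification from Theorem~\ref{thm:ms}), and then count such curves by replacing $\nu_S$ with $a_S$ in the proof of Proposition~\ref{prop:shaf} using Shimura--Taniyama, Poincar\'e reducibility, and Mazur--Kenku. The only cosmetic divergence is that you work directly with $Z(\sp\mathcal O)\to\el(T)$ rather than passing through the moduli scheme $Y(T)$ and the forget-$[\Delta=b]$ map as the paper does, and you spell out the prime-by-prime conductor divisibility that the paper leaves implicit; the substance is identical.
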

\begin{proof}
In this proof, we use the terminology and notation which we introduced in Section \ref{sec:parshin}. We define $T=\sp(\mathcal O[1/(6a)])$ and $b=-a/1728$. It follows that the number of solutions of Mordell's equation (\ref{eq:mordell}) is at most the cardinality of $Y(T)$, where
$$Y=\sp\bigl(\ZZ[1/6,c_4,c_6,b,1/b]/(1728b-c_4^3+c_6^2)\bigl)$$ for $c_4$ and $c_6$ ``indeterminates''. We now estimate the cardinality of $Y(T)$. The remark at the end of Section \ref{subsec:mordell} gives that $Y=M(\mathcal P)$ is a moduli scheme of elliptic curves, where $\mathcal P=[\Delta=b]$ is the corresponding moduli problem on $(Ell)$. 
Thus Lemma \ref{lem:moduli} gives a map $$\phi:Y(T)\to M(T)$$ for $M(T)$ the set of isomorphism classes of elliptic curves over $T$. We notice that the map $\phi$ coincides with the map $\phi$ constructed in Proposition \ref{prop:mpar}. Further, we see that the arguments of Proposition \ref{prop:mpar} (iii) and of Theorem \ref{thm:ms} imply that $\lvert \phi(Y(T))\rvert$ is at most the number of $\QQ$-isomorphism classes of elliptic curves over $\QQ$, with conductor dividing $a_S$. 
Therefore, on replacing in the proof of Proposition \ref{prop:shaf} the number $\nu_S$ by $a_S$, we deduce $$\lvert \phi(Y(T))\rvert\leq \frac{2}{3}a_S\prod (1+1/p)$$
with the product taken over all rational primes $p$ dividing $a_S$.
Proposition \ref{prop:mpar} (i) shows that $\phi$ is injective and then the displayed inequality implies Corollary \ref{cor:mquant}.
\end{proof}

We now compare Corollary \ref{cor:mquant} with results in the literature. 
In the classical case $\mathcal O=\ZZ$, the actual best explicit upper bound for the number of solutions  of (\ref{eq:mordell})  is due to Poulakis \cite{poulakis:corrigendum}. 
We see that Corollary \ref{cor:mquant} is better than Poulakis' result when $a_S\leq 2^{180}$, and is worse when $a_S$ is sufficiently large.
However, for large $a_S$ the actual best bound follows from Ellenberg-Helfgott-Venkatesh \cite{heve:integralpoints,elve:classgroup}. On combining their results (see  \cite{rvk:height} for details), one obtains that the number of solutions of (\ref{eq:mordell}) is $$\ll c_0^s(1+\log q)^2\textnormal{rad}(a)^{0.1689}$$
for $c_0$ an absolute constant, $s=\lvert S\rvert$ and $q=\max S$. 
This asymptotic bound is better than the asymptotic estimate implied by Corollary \ref{cor:mquant}. We point out that the methods of Poulakis \cite{poulakis:corrigendum} and Helfgott-Venkatesh \cite{heve:integralpoints} are fundamentally different from the method of Corollary \ref{cor:mquant}; see the end of Section \ref{subsec:he} for a brief discussion of the Diophantine results used in the proofs of \cite{poulakis:corrigendum} and \cite{heve:integralpoints}. To conclude our comparison, we mention that Evertse-Silverman \cite{evsi:uniformbounds} applied Diophantine approximations to obtain an explicit upper bound for the number of solutions of (\ref{eq:mordell}). Their bound involves inter alia a quantity which depends on a certain class number.

\subsection{Additional Diophantine applications}\label{sec:thue}

In this section we discuss additional Diophantine applications of the Shimura-Taniyama conjecture. In particular, we consider cubic Thue equations.  

There are many Diophantine equations which can be reduced to $S$-unit or Mordell equations, such as for example (super-) elliptic Diophantine equations. Usually these reductions consist  of elementary, but ingenious, manipulations of explicit equations and  they often require to solve $S$-unit and Mordell equations over controlled field extensions $K$ of $\QQ$. Unfortunately we can not use most of the standard reductions, since our results in the previous sections only hold for $K=\QQ$.  However, we now discuss constructions which allow to reduce certain classical Diophantine problems without requiring field extensions. 

\subsubsection{Thue equations}

Let $m$ be an integer and let $f\in \ZZ[x,y]$ be an irreducible binary form of degree $n\geq 3$, with discriminant $\Delta$. We consider the classical Thue equation
\begin{equation}\label{eq:thue}
f(u,v)=m, \ \ \ (u,v)\in \ZZ\times \ZZ.
\end{equation} 
The famous result of Thue, based on Diophantine approximations, gives that (\ref{eq:thue}) has only finitely many solutions. Moreover, Baker \cite{baker:contributions} used his theory of logarithmic forms to prove an effective finiteness result for Thue equations; see \cite{bawu:logarithmicforms} for generalizations.

We now suppose that $n=3$. To prove (effective) finiteness results for (\ref{eq:thue}), we may and do assume by standard reductions that (\ref{eq:thue}) has at least one solution and that $m\Delta\neq 0$. 
Thus we get a smooth, projective and geometrically connected genus one curve 
$$X=\textnormal{Proj}\bigl(\QQ[x,y,z]/(f-mz^3)\bigl).$$
On using classical invariant theory of cubic binary forms, one obtains a finite $\QQ$-morphism $\varphi:X\to \textnormal{Pic}^0(X)$ of degree 3 and one computes 
\begin{equation*}
\textnormal{Pic}^0(X)=\textnormal{Proj}\bigl(\QQ[x,y,z]/(y^2z-x^3-az^3)\bigl), \ \ \ a=432m^2\Delta\neq 0.
\end{equation*} 
See for example Silverman \cite[p.401]{silverman:ellthue} for details.
Moreover, if $(u,v)$ satisfies (\ref{eq:thue}) and if $P$ denotes the corresponding $\QQ$-point of $X$, then the definition of $\varphi$ shows that $x(\varphi(P))$ and $y(\varphi(P))$ are both in $\ZZ$ and $z(\varphi(P))=1$. In other words, the finite $\QQ$-morphism $$\varphi:X\to \textnormal{Pic}^0(X)$$ of degree 3 reduces any cubic Thue equation (\ref{eq:thue}) to a Mordell equation (\ref{eq:mordell}) of the form $(v')^2=(u')^3+a, \ (u',v')\in\ZZ\times\ZZ$. Therefore we see that Corollary \ref{cor:mquant} gives a quantitative finiteness result for any cubic Thue equation (\ref{eq:thue}). In fact the above arguments prove more generally that any cubic Thue equation (\ref{eq:thue}) has only finitely many solutions in $\mathcal O\times \mathcal O$ for $\mathcal O=\ZZ[1/N_S]$ as in the previous sections. 
Furthermore, it seems possible to deduce from Corollary \ref{cor:m} explicit height upper bounds for the solutions of (\ref{eq:thue}) in $\mathcal O\times\mathcal O$, since all involved reductions can be made explicit.

At the time of writing, it is not clear to the author how to generalize the method in order to deal with the cases $n\geq 4$. Such generalizations would be interesting for various reasons. For example, any elliptic Diophantine equation over $\ZZ$ can be reduced to certain controlled Thue equations (\ref{eq:thue}) of degree $n=4$. This well-known reduction is ingenious, but completely elementary. It only requires the classical reduction theory of quartic binary forms over $\ZZ$ which goes back (at least) to Hermite 1848.

\section{Abelian varieties of product $\gl2$-type}\label{sec:gl2}

In the first part of this section, we define and discuss abelian varieties of product $\gl2$-type. Then we state Theorem \ref{thm:gl2} and Proposition \ref{prop:h} which provide explicit inequalities relating the stable Faltings height and the conductor of abelian varieties over $\QQ$ of product $\gl2$-type. In the second part, we prove Theorem \ref{thm:gl2} and Proposition \ref{prop:h}.

Let $S$ be a connected Dedekind scheme, with field of fractions $K$ a number field.  
Let $A$ be an abelian scheme over $S$ of relative dimension $g\geq 1$. We say that $A$ is of $\gl2$-type if there exists a number field $F$ of degree $[F:\QQ]=g$ together with an embedding $$F\hookrightarrow \nd(A)\otimes_\ZZ \QQ.$$ 
The terminology $\gl2$-type comes from the following property: If $A$ is of $\gl2$-type and if $V_\ell(A)=T_\ell(A)\otimes_{\ZZ_\ell}\QQ_\ell$ denotes the rational $\ell$-adic Tate module associated to the generic fiber of $A$, 
then $V_\ell(A)$ is a free  $F\otimes_\QQ \QQ_\ell$-module of rank 2 and thus 
the action on  $V_\ell(A)$ of the absolute Galois group of $K$  defines a representation with values in  $$\textnormal{GL}_2(F\otimes_\QQ \QQ_\ell).$$ 
Abelian varieties of $\gl2$-type were studied by several authors. For example, we mention the fundamental contributions of Ribet \cite{ribet:realmultdiv,ribet:gl2}. 
Further, we remark that elliptic curves and rational points on Hilbert modular varieties provide natural examples of abelian varieties of $\gl2$-type, and there exists a vast literature on special classes (e.g. Hilbert-Blumenthal type) of abelian varieties of $\gl2$-type; see for instance \cite{vandergeer:hilbertmodular}.

More generally, we say that $A$ is of product $\gl2$-type if $A$ is isogenous 
to a product $A_1\times_S\dotsc\times_S A_n$ of abelian schemes $A_1,\dotsc,A_n$ over $S$ which are all of $\gl2$-type. 

\subsection{Height and conductor}

Let $A$ be an abelian variety over $\QQ$ of dimension $g\geq 1$. We denote by $h_F(A)$ the stable Faltings height of $A$, and we denote by $N_A$ the conductor of $A$. See Section \ref{sec:heights} for the definitions of $h_F(A)$ and $N_A$. We obtain the following result.

\begin{theorem}\label{thm:gl2}
If $A$ is of product $\gl2$-type, then the following statements hold. 
\begin{itemize}
\item[(i)] There is an effective constant $k$, depending only on $g,N_A$, such that $h_F(A)\leq k.$
\item[(ii)] It holds $h_F(A)\leq (3N_A)^{12}+(8g)^6\log N_A.$
\end{itemize}
\end{theorem}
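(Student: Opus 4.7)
The plan is to follow the outline in Section 1.2.1: establish Theorem \ref{thm:gl2} first in the $\QQ$-simple case, and then reduce the general case via Poincar\'e's reducibility theorem combined with the isogeny estimates of Section \ref{sec:var}.

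Suppose first that $A$ is $\QQ$-simple of dimension $g$ and of $\gl2$-type. By Ribet's theorem \cite{ribet:gl2}, which combines Serre's modularity conjecture with the Tate conjecture of \cite{faltings:finiteness}, there exist an integer $N\geq 1$ and a surjective $\QQ$-morphism $\psi: J_1(N)\to A$. Carayol's theorem \cite{carayol:conductor} bounds $N$ effectively in terms of $N_A$ and $g$ (for the ``strong Weil'' quotient one has $N^g = N_A$). Poincar\'e reducibility then produces an abelian subvariety $C\subseteq J_1(N)$ and a $\QQ$-isogeny $J_1(N)\to A\times C$. Additivity of $h_F$ on products gives $h_F(A\times C)=h_F(A)+h_F(C)$; Bost's lower bound (\ref{eq:bost}) gives $h_F(C)\geq -\tfrac{1}{2}(\dim C)\log(2\pi^2)$; and Lemma \ref{lem:hj1n} gives $h_F(J_1(N))\leq 17\cdot 10^3 N^{12}$. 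Combining these three inputs with an isogeny estimate controlling $|h_F(J_1(N))-h_F(A\times C)|$---Lemma \ref{lem:abisogvar}(i) for part (i), or the fully explicit Gaudron--R\'emond estimate (\ref{eq:garehdiff}) for part (ii)---yields the desired bound on $h_F(A)$ in the $\QQ$-simple case.

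For the general case I would invoke Poincar\'e reducibility over $\QQ$ to obtain a $\QQ$-isogeny $A\sim \prod_{i=1}^m B_i$ with each $B_i$ a $\QQ$-simple abelian variety; since $A$ is of product $\gl2$-type, each $B_i$ is itself of $\gl2$-type. The conductor is multiplicative on products and invariant under $\QQ$-isogeny, so $\prod_i N_{B_i}=N_A$; and $h_F$ is additive on products. Applying the $\QQ$-simple bound to each $B_i$ and combining with the isogeny estimates of Section \ref{sec:var} to control $|h_F(A)-\sum_i h_F(B_i)|$ then gives the bounds stated in (i) and (ii).

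The main technical obstacle will be producing the explicit form in (ii). The Gaudron--R\'emond estimate (\ref{eq:garehdiff}) depends polynomially on $h_F$ itself through a factor $\max(h_F,\log d,1)^2$, so one must either insert an initial crude bound for $h_F(A)$ (for instance the one coming from Lemma \ref{lem:hj1n} applied directly to $J_1(N)$) and bootstrap, or arrange the estimates so that this self-referential contribution is absorbed into the $(8g)^6\log N_A$ summand. Carefully tracking the resulting exponents---$12$ from Javanpeykar's Belyi-degree estimate underlying Lemma \ref{lem:hj1n}, and the $g^3$ from (\ref{eq:garehdiff})---and using $N\leq N_A$ from Carayol's theorem should yield exactly the stated explicit constants $(3N_A)^{12}$ and $(8g)^6$.
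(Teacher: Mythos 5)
Your proposal follows essentially the same route as the paper: decompose $A$ by Poincar\'e reducibility into $\QQ$-simple $\gl2$-type ``factors'' $A_i$, invoke Ribet's quotient (\ref{eq:serreconj}) together with Carayol's conductor formula to produce $J_1(N_i)\to A_i$ with $N_i^{g_i}=N_{A_i}$, replace each $A_i$ by a $\QQ$-isogenous abelian subvariety of $J_1(N_i)$ via Poincar\'e complement, and then chain the additivity of $h_F$, Bost's lower bound (\ref{eq:bost}), Javanpeykar's bound $h_F(J_1(N))\leq 17\cdot 10^3 N^{12}$ from Lemma \ref{lem:hj1n}, and the isogeny estimates (Lemma \ref{lem:abisogvar}(i) for part (i), Gaudron--R\'emond (\ref{eq:garehdiff}) for part (ii)).

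One clarification on the ``bootstrapping'' concern you raise: it does not actually arise, because the isogeny estimates are always applied with the \emph{controlled} side of the isogeny furnishing the $\max(h_F,\ldots)$ argument. Specifically, when bounding the variation of $h_F$ in the isogeny class of $J_i=J_1(N_i)$ one uses $h_F(J_i)$, which is bounded by Lemma \ref{lem:hj1n}; and when bounding the variation in the isogeny class of $A'=\prod A_i^{e_i}$ one uses $h_F(A')=\sum e_i h_F(A_i)$, which has already been controlled. Since isogeny is symmetric, (\ref{eq:garehdiff}) can always be read off the controlled object, so no initial crude bound or iterative absorption is needed; everything chains forward in one pass. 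This is precisely how the paper avoids the circularity you anticipated.
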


We point out that Theorem \ref{thm:gl2} generalizes Proposition \ref{prop:he} which holds for elliptic curves over $\QQ$, since any elliptic curve is an abelian variety of $\gl2$-type.

Further, we mention that the proofs of Theorem \ref{thm:gl2} (i) and (ii) are in principle the same. The only difference is that in (i) we use isogeny estimates based on ``essentially algebraic" methods, and in (ii) we apply isogeny estimates coming from transcendence. On calculating explicitly the constant $k$ in our proof of (i), it turns out that the resulting inequality is worse than (ii). However, for certain abelian varieties the method of (i) is capable to produce inequalities which are better than (ii). For example, on refining the proof of (i) for semi-stable elliptic curves over $\QQ$, we obtain the following asymptotic result in which  $\gamma=0.5772\dotsc$ denotes Euler's constant and  $f(x)=x\log (x)\log \log x$ for $x\gg 1$.

\begin{proposition}\label{prop:h}
If $E$ is a semi-stable elliptic over $\QQ$, then $$h_F(E)\leq \frac{e^{\gamma}}{4\pi^2}f(N_E)+o(f(N_E)).$$
\end{proposition}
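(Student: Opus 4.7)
The plan is to bypass Frey's formula~(\ref{eq:fund}) used in the proof of Proposition~\ref{prop:he}, and instead exploit directly the Arakelov-theoretic asymptotic bound for $h_F(J_0(N))$ due to Ullmo. The starting point is that semi-stability of $E$ over $\QQ$ forces its conductor $N=N_E$ to be squarefree, so that (by Deligne--Rapoport) $X_0(N)$ has semi-stable reduction over $\sp(\ZZ)$, whence $J_0(N)$ is a semi-stable abelian variety over $\QQ$. This in turn makes $h$ and $h_F$ interchangeable for $E$, $J_0(N)$ and all intermediate varieties.

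First I would apply the Shimura--Taniyama conjecture~(\ref{eq:st}) to obtain a surjection $J_0(N)\to E$ over $\QQ$, and then invoke Poincar\'e reducibility inside the semi-stable $J_0(N)$ to replace $E$ by a $\QQ$-isogenous abelian subvariety $E'\hookrightarrow J_0(N)$. Since $E$ and $E'$ are $\QQ$-isogenous elliptic curves, Lemma~\ref{lem:abisogvar}(ii) gives $|h(E)-h(E')|\leq \tfrac{1}{2}\log 163$, and both are semi-stable so $h_F(E)=h(E)$ and $h_F(E')=h(E')$. Next, since $J_0(N)$ is semi-stable, Lemma~\ref{lem:abisogvar}(iii) applied with $A=J_0(N)$ and $C=E'$ yields
\begin{equation*}
h_F(E)\leq h_F(J_0(N))+\tfrac{g_N}{2}\log(8\pi^2)+O(1)=h_F(J_0(N))+O(g_N),
\end{equation*}
where $g_N=\dim J_0(N)$ is the genus of $X_0(N)$.

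The analytic heart of the argument is Ullmo's asymptotic inequality~(\ref{eq:ullmo}): for squarefree $N$,
\begin{equation*}
h_F(J_0(N))\leq \tfrac{g_N}{2}\log N + o(g_N\log N).
\end{equation*}
To produce the sharp constant $e^{\gamma}/(4\pi^2)$, I would then bound $g_N\leq d/12=\tfrac{N}{12}\prod_{p\mid N}(1+1/p)$ via~(\ref{eq:destimate}), and combine this with the Mertens-type asymptotic $\prod_{p\leq x}(1+1/p)=\tfrac{6e^{\gamma}}{\pi^2}\log x+o(\log x)$ to deduce that for squarefree $N$,
\begin{equation*}
g_N\leq \frac{e^{\gamma}\,N\log\log N}{2\pi^2}(1+o(1)),
\end{equation*}
the bound being asymptotically saturated on primorials $N=p_1\cdots p_k$, where $\log p_k\sim \log\log N$. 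Multiplying the last two displays and absorbing the error $O(g_N)=O(N\log\log N)=o(f(N_E))$ yields $h_F(E)\leq \tfrac{e^{\gamma}}{4\pi^2}f(N_E)+o(f(N_E))$.

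The main obstacle is the sharp Mertens-type estimate for $\prod_{p\mid N}(1+1/p)$ restricted to \emph{squarefree} $N$: the resulting constant $6e^{\gamma}/\pi^2$ (rather than the classical $e^{\gamma}$ from Gronwall's theorem for $\sigma(n)/n$, whose extremals are colossally abundant and hence not squarefree) is exactly what gives the factor $1/(4\pi^2)$ in the final bound. A secondary point is to check that the $o(g_N\log N)$ remainder in Ullmo's bound is uniform enough across squarefree $N$ to be absorbed into $o(f(N_E))$, which it is since $g_N\log N\leq f(N_E)(1+o(1))$ in this regime.
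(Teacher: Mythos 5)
Your argument reproduces the paper's proof essentially step for step: Shimura--Taniyama surjection $J_0(N)\to E$, passage (as in the proof of Theorem~\ref{thm:gl2}) to a $\QQ$-isogenous abelian subvariety $E'\hookrightarrow J_0(N)$, Lemma~\ref{lem:abisogvar}(ii)--(iii) justified by the semi-stability of $E$ and hence of $J_0(N)$ (squarefree $N$), Ullmo's bound~(\ref{eq:ullmo}), and the Mertens-type estimate for $\prod_{p\mid N}(1+1/p)$ on squarefree $N$ feeding into the genus bound from~(\ref{eq:destimate}). The reasoning is correct, and the only addition is the (accurate) remark identifying primorials as the extremal squarefree $N$ and the explicit constant $6e^\gamma/\pi^2$, which the paper folds into ``standard analytic estimates.''
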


We remark that if $E$ is a semi-stable elliptic curve over $\QQ$  with good reduction at the primes 2 and 3, then Proposition \ref{prop:h} can be slightly improved to $$h_F(E)\leq \frac{e^\gamma}{6\pi^2}f(N_E)+o(f(N_E)).$$
Indeed, this inequality follows on replacing in the proof of Proposition \ref{prop:h} the asymptotic estimate (\ref{eq:ullmo}) of Ullmo by the asymptotic formula (\ref{eq:jk}) of Jorgenson-Kramer.

\subsection{Proof of Theorem \ref{thm:gl2} and Proposition \ref{prop:h}}

In the first part of this section, we collect some useful properties of abelian varieties over $\QQ$ of $\gl2$-type. In the second part, we combine these properties with results obtained in Sections \ref{sec:heights}, \ref{sec:var} and \ref{sec:hf} to prove Theorem \ref{thm:gl2} and Proposition \ref{prop:h}.

\subsubsection{Preliminaries}\label{sec:serremod}

To prove Theorem \ref{thm:gl2} we use Serre's modularity conjecture \cite[(3.2.4)$_{\textnormal{?}}$]{serre:representations}. Building on the work of many mathematicians, Khare-Wintenberger \cite{khwi:serre} recently proved Serre's modularity conjecture. 
Furthermore, Ribet  generalized the arguments of Serre \cite[Th\'eor\`eme 5]{serre:representations} and he showed in \cite[Theorem 4.4]{ribet:gl2} that Serre's modularity conjecure has the following consequence. 
Suppose that $A$ is an abelian variety over $\QQ$ of $\gl2$-type. If $A$ is $\QQ$-simple, then there exists an integer $N\geq 1$ together with a surjective morphism 
\begin{equation}\label{eq:serreconj}
J_1(N)\to A
\end{equation}
of abelian varieties over $\QQ$. 
Here $J_1(N)$ denotes the usual modular Jacobian, defined for example in Section \ref{sec:hf}. We note that Serre and Ribet used inter alia the Tate conjecture \cite{faltings:finiteness} to prove the implication ``Serre's modularity conjecture $\Rightarrow$ (\ref{eq:serreconj})".

We now collect additional results 
which shall be used in the proof of Theorem \ref{thm:gl2}. Assume that $A$ is an abelian variety over $\QQ$ of $\gl2$-type. Then there exists  a $\QQ$-simple abelian variety $B$ over $\QQ$ of $\gl2$-type, an integer $n\geq 1$ and a $\QQ$-isogeny
\begin{equation}\label{eq:factorization}
A\to B^n
\end{equation}
for $B^n$ the $n$-fold product of $B$. This result was established by Ribet in course of his proof of \cite[Theorem 2.1]{ribet:gl2}. 
Further, we shall use the following important property of abelian varieties of $\gl2$-type. Suppose that $A$ and $A'$ are $\QQ$-isogenous abelian varieties over $\QQ$. Then $A$ is of $\gl2$-type if and only if $A'$ is of $\gl2$-type. Indeed this follows directly from $\dim(A)=\dim(A')$ and $\nd(A)\otimes_\ZZ \QQ\cong \nd(A')\otimes_\ZZ \QQ$. 

For any abelian variety $A$ over $\QQ$, we denote by $N_A$ the conductor of $A$. Let $N\geq 1$ be an integer and consider the congruence subgroup  $\Gamma_1(N)\subset \textnormal{SL}_2(\ZZ)$. For any normalized newform $f\in S_2(\Gamma_1(N))$, we let $A_f=J_1(N)/I_fJ_1(N)$ be the abelian variety over $\QQ$ associated to $f$ by Shimura's construction; see Section \ref{sec:modulardegree} for the definitions with respect to $\Gamma_0(N)$ and replace therein $\Gamma_0(N)$ by $\Gamma_1(N)$. 
The abelian variety $A_f$ is $\QQ$-simple and the $\QQ$-simple ``factors" of $J_1(N)$ are unique up to $\QQ$-isogenies. Thus \cite[Proposition 2.3]{ribet:realmult} implies that any $\QQ$-simple quotient of $J_1(N)$ is $\QQ$-isogenous to $A_f$ for some normalized newform $f\in S_2(\Gamma_1(M))$ of level $M$ with $M\mid N$. 
Further, a result of Carayol in \cite{carayol:conductor} gives for any positive integer $M$ that any normalized newform $f\in S_2(\Gamma_1(M))$ satisfies $N_{A_f}=M^{\dim(A_f)}$.  
We now suppose that $A$ is the $\QQ$-simple abelian variety over $\QQ$ of $\gl2$-type, which appears in  (\ref{eq:serreconj}). Then on combining the above observations, we see that one can choose the number $N$ in (\ref{eq:serreconj}) such that
\begin{equation}\label{eq:serrecond}
N_{A}=N^{\dim(A)}.
\end{equation}
We are now ready to prove Theorem \ref{thm:gl2} and Proposition \ref{prop:h}. 

\subsubsection{Proofs}
We continue the notation of the previous section. For any abelian variety $A$ over $\QQ$, we denote by $h_F(A)$ the stable Faltings height of $A$.

As already mentioned, our proofs of Theorem \ref{thm:gl2} (i) and (ii) are essentially the same. We now  describe the proof of (ii), which is divided into the following two parts. In the first part, we use (\ref{eq:serreconj}) and (\ref{eq:serrecond}) to show that any abelian variety $A$ as in Theorem \ref{thm:gl2} is $\QQ$-isogenous to a product $\prod A_i^{e_i}$, where $e_i\geq 1$ is an integer and $A_i$ is an abelian subvariety of $J_1(N_i)$ for $N_i\geq 1$ an integer dividing $N_{A_i}$.  In the second part, we  combine results from  Sections \ref{sec:heights} and \ref{sec:var} to deduce then an upper bound for $h_F(A)$ in terms of $h_F(J_1(N_i))$, $\dim(J_1(N_i))$ and $g=\dim(A)$, and then in terms of $N_i$ and $g$ by Section \ref{sec:hf}, and finally in terms of $N_A$ and $g$ since each $N_i$ divides $N_{A_i}$ and since  $\prod N_{A_i}^{e_i}=N_A$.

\begin{proof}[Proof of Theorem \ref{thm:gl2}]
We take an abelian variety $A$ over $\QQ$ of dimension $g\geq 1$ and we assume that $A$ is of product $\gl2$-type.

1. Poincar\'e's reducibility theorem gives positive integers $e_i$ together with $\QQ$-simple abelian varieties $A_i$ over $\QQ$ such that $A$ is $\QQ$-isogenous to the product $\prod A_i^{e_i}$. We write $g_i$ for the dimension of $A_i$. The $\QQ$-simple ``factors" $A_i$ of $A$ are unique up to $\QQ$-isogeny, and by assumption $A$ is $\QQ$-isogenous to a product of abelian varieties over $\QQ$ of $\gl2$-type. Therefore (\ref{eq:factorization}) implies that $A_i$ is $\QQ$-isogenous to an abelian variety over $\QQ$ of $\gl2$-type, and thus  $A_i$ is of $\gl2$-type as well. 
Then the results collected in (\ref{eq:serreconj}) and (\ref{eq:serrecond}) provide a positive integer $N_i$ with $N_i^{g_i}=N_{A_i}$ together with a surjective morphism 
\begin{equation}\label{eq:quotient}
J_i=J_1(N_i)\to A_i
\end{equation}
of abelian varieties over $\QQ$. 
Let $B_i$ be the identity component of the kernel of $J_i\to A_i$. It is an abelian subvariety of $J_i$. 
Then Poincar\'e's reducibility theorem gives a complementary abelian subvariety $A_i'$ of $J_i$ together with a $\QQ$-isogeny $A_i'\times_\QQ B_i\to J_i$ induced by addition. 
We next verify that $A_i$ and $A_i'$ are $\QQ$-isogenous. The kernel of the surjective morphism $J_i\to A_i$ is a $\QQ$-subgroup scheme of $J_i$ 
whose dimension coincides with $\dim(B_i)$. 
Hence, the dimension formula implies that the dimensions of $A_i$ and $A_i'$ coincide. 
Let $A_i'\to A_i$ be the morphism obtained by composing the natural inclusion $A_i'\hookrightarrow A_i'\times_\QQ B_i$ with the $\QQ$-isogeny $A_i'\times_\QQ B_i\to J_i$ and then with the morphism $J_i\to A_i$. We recall that the surjective morphism $A_i'\times_\QQ B_i\to J_i$ is induced by addition, and $B_i$ is the identity component of the kernel of $J_i\to A_i$. Therefore we see that the morphism $A_i'\to A_i$ is surjective, 
and thus it is a $\QQ$-isogeny 
 since $\dim(A_i')=\dim(A_i)$.
Hence, after replacing $A_i$ by $A_i'$, we may and do assume that  $A_i$ is an abelian subvariety of $J_i$ and that there exists a $\QQ$-isogeny
\begin{equation}\label{eq:subvariety}
A_i\times_\QQ B_i\to J_i.
\end{equation}

2. We now begin to estimate the heights. For any abelian variety $B$ over $\QQ$, we denote by $v_B$ the maximal variation of the stable Faltings height $h_F$ in the $\QQ$-isogeny class of $B$; that is $v_B=\sup \lvert h_F(B)-h_F(B')\rvert$ with the supremum taken over all abelian varieties $B'$ over $\QQ$ which are $\QQ$-isogenous to $B$. The abelian variety $A'=\prod A_i^{e_i}$ satisfies
\begin{equation}\label{eq:sumhfa}
h_F(A')=\sum e_i h_F(A_i), \ \  \textnormal{ and } \ \ h_F(A)\leq v_{A'}+h_F(A')
\end{equation}
since $A$ is an abelian variety over $\QQ$ which is $\QQ$-isogenous to $A'$.
Write $\di=\dim(J_i)$ and define $J_i'=A_i\times_\QQ B_i$.
It holds that $h_F(A_i)=h_F(J_i')-h_F(B_i)$, and it follows from (\ref{eq:subvariety}) that $h_F(J_i')$ is at most $v_{J_i}+h_F(J_i)$. Therefore the lower bound for $h_F(B_i)$ in (\ref{eq:bost}) implies 
\begin{equation}\label{eq:hfai}
h_F(A_i)\leq v_{J_i}+h_F(J_i)+\frac{\di}{2}\log(2\pi^2).
\end{equation}
Here we used that the dimension of $B_i$ is at most $\dim(J_i')=\di$ and that the lower bound for $h_F(B_i)$ in (\ref{eq:bost}) holds in addition for $B_i=0$ since  $h_F(0)=0$. To control $\di$ in terms of $N_i$, we consider the modular curve
$X_1(N_i)=X(\Gamma_1(N_i))$ over $\QQ$ defined in Section \ref{sec:hf}. We recall that $J_i=J_1(N_i)$ is the Jacobian of $X_1(N_i)$ and hence the genus of $X_1(N_i)$ coincides with the dimension $\di$ of  $J_i$. Therefore (\ref{eq:index}) together with \cite[p.107]{dish:modular} implies 
\begin{equation}\label{eq:\di}
\di\leq \frac{1}{24}N_i^2.
\end{equation}
To bound $N_{J_i}$ in terms of $N_i$, we use the classical result of Igusa which says that $X_1(N_i)$ has good reduction at all primes $p\nmid N_i$.  
In particular, the Jacobian $J_i=\textnormal{Pic}^0(X_1(N_i))$ of $X_1(N_i)$ has good reduction at all primes $p\nmid N_i$. It follows that all prime factors of $N_{J_i}$ divide $N_i$. Then (\ref{eq:abcondineq}) gives an effective bound for $N_{J_i}$ in terms of $\di$ and $N_i$, which together with (\ref{eq:\di}) shows that there exists an effective constant $c_i$, depending only on $N_i$, such that
\begin{equation}\label{eq:nji}
N_{J_i}\leq c_i. 
\end{equation}
We recall that $A'=\prod A_i^{e_i}$ is an abelian variety over $\QQ$ which is $\QQ$-isogenous to $A$. Hence, we get that $N_A=N_{A'}$ and then the equality $N_{A_i}=N_i^{g_i}$ in statement (\ref{eq:quotient}) gives 
\begin{equation}\label{eq:condna}
N_{A}=\prod N_{A_i}^{e_i}=\prod N_i^{e_ig_i}.
\end{equation}
In addition the dimension formula gives that $g=\sum e_ig_i$. In particular we obtain $e_i\leq g$. 

We now prove (i). Lemma \ref{lem:abisogvar} (i) gives an effective upper bound for $v_{A'}$ in terms of  $\dim(A')=g$ and $N_{A'}=N_A$, and for $v_{J_i}$ in terms of $\di$ and $N_{J_i}$. On combining these upper bounds with (\ref{eq:sumhfa}) and (\ref{eq:hfai}), we obtain an effective estimate for $h_F(A)$ in terms of $g$, $N_A$, $\di$, $h_F(J_i)$ and $N_{J_i}$; then in terms of $g$, $N_A$ and $N_i$ by (\ref{eq:\di}), Lemma \ref{lem:hj1n} and (\ref{eq:nji}); and finally in terms of $g$ and $N_A$ by (\ref{eq:condna}). This completes the proof of (i).

To show (ii) we use  (\ref{eq:garehdiff}). It gives an upper bound for $v_{A'}$ in terms of  $\dim(A')=g$ and $h_F(A')$, and for $v_{J_i}$ in terms of $\di$ and $h_F(J_i)$. On combining these upper bounds with (\ref{eq:sumhfa}) and (\ref{eq:hfai}), we obtain an estimate for $h_F(A)$ in terms of $g$, $e_i$, $\di$ and $h_F(J_i)$. 
Then (\ref{eq:\di}) together with the upper bound for $h_F(J_i)$ in Lemma \ref{lem:hj1n} lead to an estimate for $h_F(A)$ in terms of $g$, $e_i$ and $N_i$. More precisely, on computing in each step the bounds explicitly, one obtains for example the following estimate 
\begin{equation}\label{eq:ma}
h_F(A)\leq (3N'_A)^{12}+(8g)^6\log N'_A, \ \ \ \ N'_A=\prod N_i^{e_i}.
\end{equation}
To simplify the bound we used here that one can assume $N_i\geq 2$ and $g\geq 2$. Indeed, the equality $N_{A_i}=N_i^{g_i}$ in statement (\ref{eq:quotient}) together with Fontaine's result \cite{fontaine:noabz} implies that $N_i\geq 2$, and if $g=1$ then Proposition \ref{prop:he} combined with $h_F(A)\leq h(A)$ gives an inequality which is even better than (\ref{eq:ma}). Finally, it follows from (\ref{eq:condna}) that $N_A'$ divides $N_A$ and then (\ref{eq:ma}) implies (ii). This completes the proof of Theorem \ref{thm:gl2}. 
\end{proof}

We now discuss a possible variation of the proof of Theorem \ref{thm:gl2}.
For any integer $N\geq 1$, we denote by $J(N)$ the modular Jacobian defined in Section \ref{sec:hf}. 
In the proof of Theorem \ref{thm:gl2} it is possible to work  with $J(N)$ instead of $J_1(N)$ by using the canonical morphism $J(N)\to J_1(N)$.  
However, the resulting inequality would not be as good as the inequalities provided by Theorem \ref{thm:gl2}, since the bounds for $\dim(J(N))$ and $h_F(J(N))$ in terms of $N$ (see Section \ref{sec:hf}) are worse than the corresponding estimates for $J_1(N)$.

\begin{proof}[Proof of Proposition \ref{prop:h}]
We take a semi-stable elliptic curve $E$ over $\QQ$ as in the proposition. Write $N=N_E$ for the conductor of $E$, and let $J_0(N)$ be as in Section \ref{sec:modulardegree}. The Shimura-Taniyama conjecture gives a surjective morphism $$J_0(N)\to E$$ of abelian varieties over $\QQ$, see (\ref{eq:j0nq}) for details. Then, as in the first part of the proof of Theorem \ref{thm:gl2}, we find an abelian subvariety $E'$ of $J_0(N)$  which is $\QQ$-isogenous to $E$. 
The conductor $N$ of the semi-stable abelian variety $E$ is square-free.
Therefore we get that $J_0(N)$ is semi-stable 
and then an application of Lemma \ref{lem:abisogvar} (iii) with the abelian subvariety $E'$ of $J_0(N)$ gives the inequality $$h(E')\leq h(J_0(N))+\frac{g}{2}\log(8\pi^2).$$ 
Here $g$ denotes the dimension of $J_0(N)$, and  $h(A)$ denotes the relative Faltings height of an arbitrary abelian variety $A$ over $\QQ$. The dimension $g$ coincides with the genus of the modular curve $X_0(N)$ defined in Section \ref{sec:cuspforms}. Therefore the upper bound for the genus of $X_0(N)$  in (\ref{eq:destimate}) together with standard analytic estimates leads to $$g\leq \frac{e^\gamma}{2\pi^2}N\log\log N+o(N).$$
Here $\gamma=0.5772\dotsc$ denotes Euler's constant. We write $f(N)=N\log (N)\log\log N$. It holds that $h(J_0(N))=h_F(J_0(N))$, since $J_0(N)$ is semi-stable.
Hence, on combining the displayed inequalities with Ullmo's upper bound for $h_F(J_0(N))$ in (\ref{eq:ullmo}), we deduce $$h(E')\leq \frac{e^\gamma}{4\pi^2}f(N)+o(f(N)).$$
Further, Lemma \ref{lem:abisogvar} (ii) provides that $h(E)\leq h(E')+\frac{1}{2}\log 163$, and we get that $h_F(E)=h(E)$ since $E$ is semi-stable. Hence, we see that the displayed upper bound for $h(E')$ in terms of $f(N)$ proves Proposition \ref{prop:h}.
\end{proof}

We remark that the proof of Proposition \ref{prop:h} shows in addition that any semi-stable elliptic curve $E$ over $\QQ$, with conductor $N_E$ and relative Faltings height $h(E)$, satisfies
\begin{equation}\label{eq:he2}
h(E)\leq 10^8(N_E\log N_E)^6.
\end{equation}
Indeed this follows on replacing in the proof of Proposition \ref{prop:h} the asymptotic estimate for $h_F(J_0(N))$ by the explicit Lemma \ref{lem:hj1n}. We mention that on working in the proof of Theorem \ref{thm:gl2} (ii) with $J_0(N)$ instead of $J_1(N)$ (similar as in Proposition \ref{prop:h}), one can remove\footnote{One uses in addition that $h(E)\leq h_F(E)+\frac{5}{12}\log N_E+2\log 2$. This inequality follows by combining the Noether formula, \cite[Proposition 5.2 (iv)]{rvk:szpiro} and the classification of Kodaira-N\'eron.} the semi-stable assumption in (\ref{eq:he2}) and therewith one obtains (\ref{eq:he2}) for all elliptic curves over $\QQ$.  We notice that (\ref{eq:he2}) improves \cite[Theorem 2.1]{rvk:height} in the case of elliptic curves over $\QQ$. On the other hand,  (\ref{eq:he2}) is worse than  Proposition \ref{prop:he}. 

We point out that the above described proof of (\ref{eq:he2}) for all elliptic curves over $\QQ$, which uses e.g. isogeny estimates and results from Arakelov theory, is very different to the proof of Proposition \ref{prop:he} which applies inter alia the theory of modular forms. In fact the only common tool is the ``geometric" version (\ref{eq:j0nq}) of the Shimura-Taniyama conjecture.

\section{Effective Shafarevich conjecture}\label{sec:es}

In the first part of this section, we discuss several aspects of the effective Shafarevich conjecture. In the second part, we give our explicit version of the effective Shafarevich conjecture for abelian varieties of product $\gl2$-type and we deduce some applications.  In the third part, we then prove the results of Section \ref{sec:es} and finally in the last part we give a generalization for $\QQ$-virtual abelian varieties of $\gl2$-type.

\subsection{Effective Shafarevich conjecture}

Let $S$ be a non-empty open subscheme of $\sp(\ZZ)$ and let $g\geq 1$ be an integer. We denote by $h_F(A)$ the stable Faltings height of an abelian scheme $A$ over $S$. See Section \ref{sec:heights} for the definition. We now recall the effective Shafarevich conjecture.

\vspace{0.3cm}
\noindent{\bf Conjecture $\ces$.}
\emph{There exists an effective constant $c$, depending only on $S$ and $g$, such that any abelian scheme $A$  over $S$ of relative dimension $g$ satisfies $h_F(A)\leq c.$}
\vspace{0.3cm}

The case $g=1$ of this conjecture was proven in course of the proof of Theorem \ref{thm:ms}. In fact, up to a height comparison, this case was already established by Coates \cite{coates:shafarevich} who used the theory of logarithmic forms. Conjecture $\ces$ is widely open when $g\geq 2$.

We mention that Conjecture $\ces$ would have striking applications to classical Diophantine problems. For example, the following proposition gives that Conjecture $\ces$ implies the effective Mordell conjecture for curves over number fields.

\begin{proposition}\label{prop:mordell}
Suppose that Conjecture $\ces$ holds. If $X$ is a smooth, projective and geometrically connected curve of genus at least $2$, defined over an arbitrary number field, then one can determine in principle all rational points of $X$.
\end{proposition}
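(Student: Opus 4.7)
The plan is to combine the Par{\v{s}}in construction with Conjecture $\ces$ and an effective height comparison, so as to reduce Mordell's assertion for $X$ to an application of effective Northcott. First I would apply the Kodaira--Par{\v{s}}in construction \cite{parshin:construction} to the curve $X$ over its field of definition $K$: this produces, from the data of $X$ alone, a finite extension $L/K$, an integer $g'\geq 2$ and a finite set $T$ of finite places of $L$, together with a map
$$P\in X(K)\;\longmapsto\;[A_P]$$
sending each rational point to the isomorphism class of an abelian variety $A_P$ over $L$ of dimension $g'$ with good reduction at every finite place of $L$ outside $T$. The extension $L$, the dimension $g'$, the set $T$ and the maximal cardinality of the fibres of $P\mapsto [A_P]$ all depend only on $X$, while the point $P$ is recovered, up to this controlled ambiguity, from the isomorphism class of $A_P$.

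Second, to bring $\ces$ into play in the form in which it is stated (over open subschemes of $\sp(\ZZ)$), I would pass to the Weil restriction $B_P=\textnormal{Res}_{L/\QQ}A_P$. This is an abelian variety over $\QQ$ of dimension $g'[L:\QQ]$ which extends to an abelian scheme over the open subscheme $S\subset\sp(\ZZ)$ complementary to the rational primes that either lie under a place of $T$ or ramify in $L/\QQ$. Both $S$ and the relative dimension depend only on $X$, so Conjecture $\ces$ yields an effective constant $c_1=c_1(X)$ with $h_F(B_P)\leq c_1$. The standard comparison between the stable Faltings height of an abelian variety and that of its Weil restriction then gives an effective bound $h_F(A_P)\leq c_2(X)$. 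Combining this with an effective Par{\v{s}}in-type height inequality of the shape
$$h(P)\leq c_3\,h_F(A_P)+c_4,$$
obtained via Arakelov theory on an arithmetic semistable model of $X$, yields an effective upper bound $h(P)\leq c_5(X)$ for any Weil height $h$ attached to a fixed ample divisor on $X$. Effective Northcott then permits the enumeration of the finitely many candidate points and the determination of which of them actually lie in $X(K)$.

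The main obstacle is the effective height comparison in the last step: while the existence of such an inequality between $h(P)$ and $h_F(A_P)$ is classical and goes back to Par{\v{s}}in, making the constants $c_3$ and $c_4$ effectively computable in terms of the arithmetic geometry of $X$ alone requires careful Arakelov-theoretic work on the arithmetic surface attached to a semistable model of $X$. The Par{\v{s}}in construction in the first step and the Weil-restriction bookkeeping in the second step are in principle routine once one grants the statements of \cite{parshin:construction} and standard facts on Weil restrictions of Faltings heights, and the final passage to $X(K)$ is pure Northcott; the genuine difficulty lies entirely in effectivising the comparison between an abstract arithmetic invariant and a geometric height of the original point.
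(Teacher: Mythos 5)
Your overall strategy matches the paper's in spirit: reduce rational points to auxiliary abelian varieties with controlled bad reduction, pass to a Weil restriction over $\QQ$ so that Conjecture $\ces$ applies, and then compare heights. Your Weil-restriction step is exactly what the paper does when it shows $\ces\Rightarrow\ces^*$, including the appeal to Milne's conductor formula to control the resulting open subscheme of $\sp(\ZZ)$. The gap you flag at the end, however, is not a detail to be tidied up by further Arakelov-theoretic work on a semistable model; it is the entire content of that step, and the paper does not leave it open. The paper instead cites R\'emond's effective version of the Kodaira construction \cite{remond:construction}, which proves precisely the implication you need, namely that the ``classical'' effective Shafarevich conjecture for curves $\ces^*$ implies the effective Mordell conjecture. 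This lets the paper split the argument cleanly in two: first $\ces\Rightarrow\ces^*$ via Weil restriction, and then $\ces^*\Rightarrow$ effective Mordell, the latter being exactly \cite{remond:construction}.

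There is also a substantive reason not to phrase your first step in terms of the Par{\v{s}}in construction. The paper's remark immediately after the proof of Proposition \ref{prop:mordell} points out that an effective Par{\v{s}}in construction would be \emph{more} useful in this context (for transferring special cases of $\ces$ to special cases of effective Mordell) precisely because such an effective version is not available in the generality required, whereas R\'emond has effectivized the Kodaira construction. So stating your plan with ``the Par{\v{s}}in construction'' and a height inequality ``obtained via Arakelov theory'' leaves a genuine gap: the effective height comparison for Par{\v{s}}in's construction is an open problem, while for Kodaira's it is R\'emond's theorem. To close your argument, replace Par{\v{s}}in by Kodaira in the first step and cite \cite{remond:construction} for the effective height comparison, at which point your proof and the paper's become essentially the same, merely unfolded rather than routed through the intermediate statement $\ces^*$.
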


Let $K$ be a number field. In what follows, by a curve over $K$ we always mean a smooth, projective and geometrically connected curve over $K$. For any curve $X$ over $K$, we denote by $h_F(X)$ the stable Faltings height of the Jacobian $\textnormal{Pic}^0(X)$ of $X$. In the proof of Proposition \ref{prop:mordell} we will show that Conjecture $\ces$ implies in particular the following ``classical'' effective Shafarevich conjecture $\ces^*$ for curves over $K$.

\vspace{0.3cm}
\noindent{\bf Conjecture $\ces^*$.}
\emph{Let $T$ be a finite set of places of $K$. There exists an effective constant $c$, depending only on $K$, $T$ and $g$, such that any curve $X$ over $K$ of genus $g$, with good reduction outside $T$, satisfies $h_F(X)\leq c$.}
\vspace{0.3cm}

We now discuss several aspects of Conjectures $\ces$ and $\ces^*$. First, we mention that Conjecture $\ces$, which implies $\ces^*$, is a priori considerably stronger than $\ces^*$. 
For example, if $X$ is a curve over $K$, then Conjecture $\ces$ would allow in addition to control the finite places of $K$ where the reductions of $X$ and $\textnormal{Pic}^0(X)$ are different; see the discussion at the end of Section \ref{sec:es} for more details. Furthermore, it is shown in \cite{vkkr:intpointsshimura} that already special cases of Conjecture  $\ces$, as Theorem \ref{thm:es} below, have direct applications to the effective study of Diophantine equations. On the other hand, one needs to prove Conjecture $\ces^*$ in quite general situations to get effective Diophantine applications. 

We remark that de Jong-R\'emond \cite{jore:shafarevich} established Conjecture $\ces^*$ for curves over $K$ which are geometrically cyclic covers of prime degree of the projective line $\mathbb P^1_K$. They combined the method introduced by Par{\v{s}}in \cite{parshin:shafarevich} with the theory of logarithmic forms; see also the proof of \cite[Theorem 3.2]{rvk:szpiro} for some refinements. 
The results in \cite{jore:shafarevich} and \cite{rvk:szpiro}  
are not general enough to deduce effective applications for Diophantine equations via the known constructions of Kodaira or Par{\v{s}}in \cite{parshin:construction}.

We point out that a geometric analogue of Conjecture $\ces$ was  established by Faltings \cite{faltings:arakelovtheorem}, see also Deligne \cite[p.14]{deligne:monodromie} for some refinements. 

Conjecture $\ces$ is in fact equivalent to the following conjecture: For any non-empty open subscheme $S$ of the spectrum of the ring of integers of $K$, there exists an effective constant $c$, depending only on $K$, $S$ and $g$, such that any abelian scheme $A$ over $S$ of relative dimension $g$ satisfies $h_F(A)\leq c$. To prove the equivalence one uses inter alia the Weil restriction. We refer to the proof of Proposition \ref{prop:mordell} for details.

 Finally, we mention that one can formulate Conjecture $\ces$ more classically in terms of $\QQ$-isomorphism classes of abelian varieties over $\QQ$ of dimension $g$, with good reduction outside  a finite set of rational prime numbers. However, our formulation of Conjecture $\ces$ in terms of abelian schemes is more compact and is more convenient for the effective study of integral points on moduli schemes.

\subsection{Abelian schemes of product $\gl2$-type}\label{sec:esgl2}

We continue the notation of the previous section. Let $S$ be a non-empty open subscheme of $\sp(\ZZ)$ and let $g\geq 1$ be an integer.  We write $N_S=\prod p$
with the product taken over all rational prime numbers $p$ which are not in $S$.  The following theorem establishes the effective Shafarevich conjecture $\ces$ for all abelian schemes of product $\gl2$-type.

\begin{theorem}\label{thm:es}
Let $A$ be an abelian scheme over $S$ of relative dimension $g$. If $A$ is of product $\gl2$-type, then $$h_F(A)\leq (3g)^{144g}N_S^{24}.$$
\end{theorem}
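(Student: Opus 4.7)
The plan is to apply Theorem \ref{thm:gl2}(ii), after first replacing the product $N'_A=\prod N_i^{e_i}$ appearing in its proof by the least common multiple $L=\mathrm{lcm}_i(N_i)$ of the modular levels. This switch is conceptually crucial: an individual level $N_i$ can only be bounded by $N_S^{2}$ up to a $g$-dependent factor, so working with the product gives $N_S^{24\sum e_i}\leq N_S^{24g}$, whereas working with the lcm yields the uniform exponent $N_S^{24}$.

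First, by Poincar\'e reducibility and \eqref{eq:factorization} I would decompose $A_\QQ$ up to $\QQ$-isogeny as $\prod A_i^{e_i}$ with each $A_i$ a $\QQ$-simple abelian variety of $\gl2$-type. Serre's modularity conjecture \eqref{eq:serreconj} together with Carayol's result \eqref{eq:serrecond} gives a surjection $J_1(N_i)\to A_i$ with $N_{A_i}=N_i^{g_i}$, where $g_i=\dim A_i$. Since $A$ is an abelian scheme over $S$ and the conductor is an isogeny invariant, each $A_i$ has good reduction at every prime in $S$, so each $N_i$ is supported only at the primes dividing $N_S$.

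Next, setting $L=\mathrm{lcm}_i(N_i)$, the natural surjection $J_1(L)\to J_1(N_i)$ induced by the covering $X_1(L)\to X_1(N_i)$ composed with $J_1(N_i)\to A_i$ yields a surjection $J_1(L)\to A_i$ for every $i$; hence $A$ is a $\QQ$-quotient of $J_1(L)^m$ with $m=\sum e_i\leq g$. The Brumer-Kramer inequality \eqref{eq:abcondineq} applied to $A_i$ gives $v_p(N_i)=f_p(A_i)/g_i\leq 2$ when $p>2g_i+1$, and $v_p(N_i)\leq 2+O(\log g_i)$ otherwise. Taking the maximum over $i$ and estimating the small-prime contribution by $\sum_{p\leq 2g+1}\log p\ll g$ yields
\[
L\ \leq\ N_S^{\,2}\cdot(3g)^{O(g)}.
\]

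Finally, Poincar\'e's theorem applied to $J_1(L)^m\to A$ produces an abelian subvariety $B\subseteq J_1(L)^m$ that is $\QQ$-isogenous to $A$. Bost's lower bound \eqref{eq:bost} on the complementary factor, the estimate \eqref{eq:\di} for $\dim J_1(L)$, and Lemma \ref{lem:hj1n} (which gives $h_F(J_1(L))\leq 17\cdot 10^{3}\,L^{12}$) combine to yield
\[
h_F(B)\ \leq\ m\cdot h_F(J_1(L))+O(g\,\dim J_1(L))\ \leq\ (3g)^{O(g)}N_S^{24}.
\]
The isogeny variation $|h_F(A)-h_F(B)|$ is controlled by the Gaudron-R\'emond estimate \eqref{eq:garehdiff}; because that bound is self-referential in $h_F(A)$, a short bootstrap resolves the circularity and produces the claimed $h_F(A)\leq(3g)^{144g}N_S^{24}$.

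The main technical obstacle is the bookkeeping of the various $g$-dependent constants---coming from the Brumer-Kramer extra term at small primes, from Bost's bound, from Lemma \ref{lem:hj1n}, and from the bootstrap in \eqref{eq:garehdiff}---to verify that their combined contribution is absorbed by the explicit factor $(3g)^{144g}$.
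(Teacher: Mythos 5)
Your proposal is correct in its main thrust, but it takes a genuinely different route from the paper. The paper does \emph{not} bound $N'_A=\prod N_i^{e_i}$ and substitute into Theorem \ref{thm:gl2}(ii) as you anticipated; rather, it goes inside the proof of Theorem \ref{thm:gl2}(ii) and extracts the sharper \emph{sum} inequality (\ref{eq:hfs}), namely $h_F(A)\leq \sum e_i\bigl(18\cdot 10^3N_i^{12}+(8g)^6\log N_i\bigr)$, keeping each modular Jacobian $J_1(N_i)$ separate. Applying Lemma \ref{lem:condest} to each simple factor $\mathcal A_i$ over $S$ then gives $N_i\leq (2g+1)^{6\rho}N_S^2$ individually, and the sum contributes only the $g$-dependent prefactor, yielding $N_S^{24}$. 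Your alternative --- replace the $N_i$ by $L=\mathrm{lcm}_i(N_i)$ and realize $A_\QQ$ (up to isogeny) as a quotient of a single $J_1(L)^m$, $m=\sum e_i\leq g$ --- is a valid way to achieve the same uniform exponent, since $L$ is still $N_S^2$ times a factor controlled by the small primes via Brumer--Kramer; the conceptual diagnosis (that passing through the total conductor $N_A=\prod N_{A_i}^{e_i}$ costs a factor $g$ in the exponent of $N_S$) is the right one in both cases. The trade-offs: the paper's sum keeps track of the actual levels $N_i$ and hence is tighter when the factors have small conductor; your lcm gives a cleaner statement (one Jacobian, one Poincar\'e complement, one Bost correction) at the cost of the slightly larger level $L\geq\max_i N_i$. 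Two small gaps in your write-up: (a) you omit the isogeny-variation term for $J_1(L)^m$ itself (needed to pass from $h_F(J_1(L)^m)$ to $h_F(B\times C)$ where $B\times C$ is only isogenous to $J_1(L)^m$); this is bounded by (\ref{eq:garehdiff}) in terms of $h_F(J_1(L)^m)=m\,h_F(J_1(L))$ and contributes a lower-order term. (b) There is no real circularity to bootstrap: applying (\ref{eq:garehdiff}) with the roles of $A$ and $B$ swapped bounds $|h_F(A)-h_F(B)|$ in terms of the already-controlled $h_F(B)$; the paper's proof of Theorem \ref{thm:gl2}(ii), step 2, applies the estimate in exactly this direction.
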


We point out that the bound in Theorem \ref{thm:es} is polynomial in terms of $N_S$. In course of the proof of Theorem \ref{thm:es} we shall obtain the more precise inequality (\ref{eq:precisebound}), which improves in particular the estimate of Theorem \ref{thm:es} and which is polynomial in terms of the relative dimension $g$ of $A$.  
Moreover, it is possible to refine (\ref{eq:precisebound}) in special cases. For example, we obtain the following result for semi-stable abelian varieties of product $\gl2$-type.

\begin{proposition}\label{prop:ss}
Let $A$ be an abelian scheme over $S$ of relative dimension $g$. If $A$ is of product $\gl2$-type and if the generic fiber of $A$ is semi-stable, then $$h_F(A)\leq g(3N_S)^{12}+(6g)^7\log N_S.$$ 
\end{proposition}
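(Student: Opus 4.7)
The plan is to refine the proof of Theorem \ref{thm:gl2} (ii) by exploiting the semi-stability hypothesis in two ways: first, to force the levels produced by Ribet's theorem to be squarefree (hence divisors of $N_S$ rather than merely bounded by $N_S^g$), and second, to replace the Bost lower bound used in the generic case by the much sharper Lemma \ref{lem:abisogvar} (iii), which becomes available precisely because squarefreeness of the level makes the ambient modular Jacobian semi-stable.

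First I would Poincar\'e-decompose $A_\QQ$ as $\QQ$-isogenous to $A' = \prod A_i^{e_i}$ with pairwise non-isogenous $\QQ$-simple $A_i$. As in the proof of Theorem \ref{thm:gl2}, each $A_i$ is of $\gl2$-type by (\ref{eq:factorization}) together with the isogeny invariance of the $\gl2$-type property; it is also semi-stable, since semi-stability is preserved under $\QQ$-isogeny and under passage to quotients. Ribet's theorem (\ref{eq:serreconj}) and Carayol's formula (\ref{eq:serrecond}) then furnish an integer $N_i \geq 2$ with $N_{A_i} = N_i^{g_i}$ (where $g_i = \dim A_i$) together with a $\QQ$-surjection $J_1(N_i) \to A_i$. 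Semi-stability of $A_i$ forces the conductor exponent of $A_i$ at every prime to be at most $g_i$, so $N_i$ is squarefree; the good-reduction hypothesis for $A$ over $S$ then forces $N_i \mid N_S$, so in particular $N_i \leq N_S$.

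Next I would use Poincar\'e reducibility inside $J_1(N_i)$ to obtain an abelian subvariety $A_i' \subset J_1(N_i)$ that is $\QQ$-isogenous to $A_i$. Squarefreeness of $N_i$ ensures (via Deligne--Rapoport) that $J_1(N_i)$ is semi-stable over $\QQ$, and Lemma \ref{lem:abisogvar} (iii) then yields
\[
h_F(A_i') = h(A_i') \leq h_F(J_1(N_i)) + \tfrac{\di}{2}\log(8\pi^2),
\]
with $\di = \dim J_1(N_i) \leq N_i^2/24$ by (\ref{eq:index}). Combining this with the bound $h_F(J_1(N_i)) \leq 17 \cdot 10^3 N_i^{12}$ from Lemma \ref{lem:hj1n}, setting $\tilde A = \prod (A_i')^{e_i}$, and using additivity of $h_F$ on products together with $\sum e_i \leq \sum e_i g_i = g$ and $N_i \leq N_S$, I would obtain $h_F(\tilde A) \leq g(3N_S)^{12}$ after absorbing the (much smaller) $\log(8\pi^2)$ contribution into the constant.

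Finally, since $A_\QQ$ is $\QQ$-isogenous to $\tilde A$, the Gaudron--R\'emond estimate (\ref{eq:garehdiff}) applied with the already-bounded $h_F(\tilde A)$ on the right gives
\[
|h_F(A) - h_F(\tilde A)| \leq 2^{10} g^3 \log\bigl((14g)^{64 g^2}\max(h_F(\tilde A),1)^2\bigl),
\]
which is of order $g^5 \log(14g) + g^3 \log(g N_S)$ and therefore, after collecting all subleading terms, fits inside $(6g)^7 \log N_S$. The main obstacle I foresee is the final numerical bookkeeping: verifying cleanly that the isogeny variation together with the $\log(8\pi^2)$-type correction from Lemma \ref{lem:abisogvar} (iii) really does fit inside the stated error $(6g)^7 \log N_S$ in the tightest cases (notably $g = 1$, $N_S = 2$), and invoking Deligne--Rapoport rigorously to secure the semi-stability of $J_1(N_i)$ at the primes dividing the squarefree $N_i$.
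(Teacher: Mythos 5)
Your reduction of the conductor is exactly the paper's: semi-stability of $A_\QQ$ passes to each simple factor $A_i$ via (\ref{eq:semi-stable}), the conductor exponent of a semi-stable $g_i$-dimensional abelian variety is at most $g_i$ at every prime (Lemma \ref{lem:condest} (ii)), and together with $N_{A_i}=N_i^{g_i}$ and good reduction over $S$ this forces $N_i\mid N_S$. Up to that point you are on the paper's track.

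The gap is in your height estimate: the claim that squarefreeness of $N_i$ makes $J_1(N_i)$ semi-stable over $\QQ$ is false. Deligne--Rapoport gives semi-stability of $X_0(N)$, hence of $J_0(N)$, for squarefree $N$; it does not give this for $J_1(N)$. Already $J_1(13)$ is a counterexample: it is isogenous to $A_f$ for the level-$13$ newform with nebentypus of order $6$, the local component at $13$ is a ramified principal series, so $J_1(13)$ has potentially good but not semi-stable reduction at $13$ (its Néron fiber at $13$ has a nontrivial unipotent part; good reduction is only attained after the ramified base change to $\QQ(\zeta_{13})^+$). Consequently Lemma \ref{lem:abisogvar} (iii), whose hypothesis is semi-stability of the ambient abelian variety, cannot be applied to the subvariety $A_i'\subset J_1(N_i)$, and the inequality $h_F(A_i')\leq h_F(J_1(N_i))+\frac{\di}{2}\log(8\pi^2)$ is unjustified. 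The paper sidesteps this entirely: having secured $N_i\leq N_S$, it simply substitutes into the inequality (\ref{eq:hfs}) already established in the proof of Theorem \ref{thm:gl2} (ii), which compares $A_i$ with $J_1(N_i)$ using Bost's lower bound (\ref{eq:bost}) and the isogeny-variation term $v_{J_i}$ from (\ref{eq:garehdiff}) and needs no semi-stability of the modular Jacobian; together with $\sum e_i\leq g$ this gives the stated bound. Your route could in principle be repaired by observing that a semi-stable $\QQ$-simple abelian variety of $\gl2$-type must come from a newform with everywhere unramified, hence trivial, nebentypus, so that it is a quotient of $J_0(N_i)$ with $N_i$ squarefree and $J_0(N_i)$ genuinely semi-stable — but that is an additional argument you have not supplied, and the simpler fix is to fall back on (\ref{eq:hfs}).
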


Next, we deduce from Theorem \ref{thm:es} new cases of the ``classical" effective Shafarevich conjecture $\ces^*$.  We say that a curve $X$ over $\QQ$ is of product $\gl2$-type if the Jacobian $\textnormal{Pic}^0(X)$ of $X$ is of product $\gl2$-type. There exist many curves over $\QQ$ of genus $\geq 2$ which are of product $\gl2$-type, see for example the articles in \cite{ribet:gl2conference}. Let $T$ be a finite set of rational prime numbers and write $N_T=\prod p$ with the product taken over all $p\in T$.

\begin{corollary}\label{cor:esc}
Let $X$ be a curve over $\QQ$ of genus $g$ which is of product $\gl2$-type. If $\textnormal{Pic}^{0}(X)$ has good reduction outside $T$, then $$h_F(X)\leq (3g)^{144g}N_T^{24}.$$ 
\end{corollary}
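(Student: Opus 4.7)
The plan is to reduce the statement to a direct application of Theorem \ref{thm:es}. Let $S=\sp(\ZZ)-T$, which is a non-empty open subscheme of $\sp(\ZZ)$ with $N_S=N_T$, and write $J=\textnormal{Pic}^0(X)$, an abelian variety over $\QQ$ of dimension $g$. By hypothesis $J$ is of product $\gl2$-type over $\QQ$ and has good reduction at every rational prime $p\notin T$, so the N\'eron model $A$ of $J$ over $S$ is an abelian scheme over $S$ of relative dimension $g$ with generic fiber $A_\QQ=J$. By definition $h_F(X)=h_F(J)=h_F(A)$, so it suffices to bound $h_F(A)$.

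The key step is to verify that $A$ itself (as an abelian scheme over $S$) is of product $\gl2$-type in the sense of Section \ref{sec:gl2}. By assumption there exist abelian varieties $B_1,\dotsc,B_n$ over $\QQ$ of $\gl2$-type and a $\QQ$-isogeny $\varphi\colon J\to B_1\times_\QQ\dotsb\times_\QQ B_n$. By the criterion of N\'eron-Ogg-Shafarevich together with the fact that isogenous abelian varieties share the same set of places of good reduction, each $B_i$ has good reduction at every prime $p\notin T$; hence each $B_i$ extends to an abelian scheme $\mathcal B_i$ over $S$. The N\'eron mapping property then extends $\varphi$ uniquely to an $S$-isogeny $A\to \mathcal B_1\times_S\dotsb\times_S\mathcal B_n$, and the same property extends any endomorphism of $B_i$ in $\nd(B_i)\otimes_\ZZ\QQ$ to an endomorphism of $\mathcal B_i$, so each $\mathcal B_i$ is of $\gl2$-type over $S$ with the same number field $F_i\hookrightarrow\nd(\mathcal B_i)\otimes_\ZZ\QQ$ of degree $\dim(B_i)$. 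Therefore $A$ is of product $\gl2$-type over $S$.

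Applying Theorem \ref{thm:es} to $A$ gives $h_F(A)\leq (3g)^{144g}N_S^{24}$, and since $N_S=N_T$ and $h_F(A)=h_F(X)$, this is exactly the desired bound. The only non-formal point in the argument is the transfer of the product $\gl2$-structure from the generic fiber to the abelian scheme over $S$, which is the step I expect to require the most care; once it is established the corollary follows immediately from Theorem \ref{thm:es}.
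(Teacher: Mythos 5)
Your proof is correct and takes the same route as the paper: both reduce immediately to Theorem \ref{thm:es} applied to the N\'eron model of $\textnormal{Pic}^0(X)$ over $S=\sp(\ZZ)-T$. The paper states this in one line, while you additionally verify the (implicit) point that the N\'eron model inherits the product $\gl2$-structure over $S$; this is a correct and harmless elaboration, and it follows exactly the mechanism the paper already sets up via (\ref{eq:homs}), (\ref{eq:rings}), and the isogeny-invariance of the conductor.
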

\begin{proof}
The  N\'eron model of $\textnormal{Pic}^0(X)$ over $S=\sp(\ZZ)-T$ 
is an abelian scheme, since $\textnormal{Pic}^{0}(X)$ has good reduction outside $T$. Therefore  Theorem \ref{thm:es} implies Corollary \ref{cor:esc}.
\end{proof}
If $X$ is a curve  over $\QQ$ with good reduction at a rational prime $p$, then $\textnormal{Pic}^0(X)$ has good reduction at $p$. This shows that Corollary \ref{cor:esc} establishes in particular the ``classical" effective Shafarevich conjecture $\ces^*$ for all curves over $\QQ$ of product $\gl2$-type. 

We now derive new isogeny estimates for abelian varieties over $\QQ$ of product $\gl2$-type. Masser-W\"ustholz bounded in \cite{mawu:abelianisogenies,mawu:factorization} the minimal degree of isogenies of abelian varieties. On combining Theorem \ref{thm:es} with the most recent version of the Masser-W\"ustholz results, due to Gaudron-R\'emond \cite{gare:isogenies}, we obtain the following corollary.

\begin{corollary}\label{cor:isoest}
Suppose that $A$ and $B$ are isogenous abelian schemes over $S$ of relative dimension $g$. If $A$ or $B$ is of product $\gl2$-type, then the following statements hold.
\begin{itemize}
\item[(i)] There exist  isogenies $A\to B$ and $B\to A$ of degree at most $(14g)^{(12g)^5}N_S^{(37g)^3}.$
\item[(ii)] In particular it holds $\lvert h_F(A)-h_F(B)\rvert\leq (30g)^3\log N_S+(9g)^6.$
\end{itemize}
\end{corollary}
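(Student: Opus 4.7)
\emph{Proof plan.} I would begin by observing that the hypothesis of being of product $\gl2$-type transfers freely between $A$ and $B$: as noted at the end of Section~\ref{sec:serremod}, whether an abelian variety over $\QQ$ is of $\gl2$-type depends only on its dimension and on the algebra $\nd\otimes_\ZZ\QQ$, both of which are preserved under $\QQ$-isogenies. Hence both generic fibers $A_\QQ$ and $B_\QQ$ are of product $\gl2$-type, and applying Theorem~\ref{thm:es} to each yields
\[
\max\bigl(h_F(A),h_F(B)\bigr)\leq (3g)^{144g}N_S^{24}.
\]

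For part~(i), I would invoke the Masser-W\"ustholz isogeny theorem in the fully explicit form of Gaudron-R\'emond \cite[Th\'eor\`eme 1.4]{gare:isogenies}. For a pair of $\QQ$-isogenous abelian varieties $A_\QQ,B_\QQ$ of dimension $g$ (so $d=1$), this provides an isogeny $A_\QQ\to B_\QQ$ whose degree is bounded explicitly in terms of $g$ and $\max(h_F(A),1)$. Substituting the bound on $h_F(A)$ coming from Theorem~\ref{thm:es} and carrying out the (routine but somewhat lengthy) bookkeeping of constants should produce the claimed estimate $(14g)^{(12g)^5}N_S^{(37g)^3}$. The resulting isogeny of generic fibers extends uniquely to a morphism of abelian $S$-schemes $A\to B$ by the N\'eron mapping property, and applying the same argument with the roles of $A$ and $B$ exchanged gives the second isogeny.

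For part~(ii), I would rely on the basic Faltings-Raynaud inequality (\ref{eq:hdegdiff}): any $K$-isogeny $\varphi$ between abelian varieties over a number field $K$ satisfies $\lvert h(A)-h(A')\rvert\leq \tfrac{1}{2}\log\deg(\varphi)$. After extending scalars to a number field $L/\QQ$ over which both $A_\QQ$ and $B_\QQ$ acquire semistable reduction, the relative Faltings heights become stable (see Section~\ref{sec:heights}) while the degree of the isogeny is unchanged. Applied to a minimal-degree isogeny furnished by~(i), this gives
\[
\lvert h_F(A)-h_F(B)\rvert\leq \tfrac{1}{2}\log\bigl((14g)^{(12g)^5}N_S^{(37g)^3}\bigr),
\]
which a short numerical check shows is bounded by $(30g)^3\log N_S+(9g)^6$, as claimed.

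The main technical obstacle is the explicit constant tracking in step~(i): one must compose the Gaudron-R\'emond degree bound (which depends polynomially on $h_F(A)$, with exponent itself a polynomial in $g$) with the exponential-in-$g$ bound from Theorem~\ref{thm:es}, and verify that the resulting expression can be absorbed into the compact form $(14g)^{(12g)^5}N_S^{(37g)^3}$. Beyond this numerical verification, no new idea is required: the corollary is essentially a direct synthesis of Theorem~\ref{thm:es} with the transcendence-based isogeny estimates of Gaudron-R\'emond.
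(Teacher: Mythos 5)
Your plan matches the paper's proof in all essentials: apply Theorem~\ref{thm:es} together with Gaudron--R\'emond's explicit isogeny bound, extend the resulting $\QQ$-isogenies over $S$ via the N\'eron mapping property, and for (ii) pass to a number field of semistable reduction and apply the Faltings--Raynaud inequality~(\ref{eq:hdegdiff}). The one small step you leave implicit is why the extended $S$-group-scheme morphism is itself an isogeny (rather than merely a morphism of abelian schemes); the paper justifies this by citing \cite[p.180]{bolura:neronmodels}, using that $A$ and $B$ are in particular semi-abelian over the Dedekind base $S$.
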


We point out that these isogeny estimates are independent of  $A$ and $B$. This is absolutely crucial for certain Diophantine applications such as for example \cite{vkkr:intpointsshimura} or Theorem \ref{thm:qes} below. On calculating the constant of Lemma \ref{lem:abisogvar} (i) explicitly, we see that Corollary \ref{cor:isoest} (ii) is  exponentially better in terms of $N_S$ and $g$ than Lemma \ref{lem:abisogvar} (i). We also note that Corollary \ref{cor:isoest} (ii) holds with $h_F$ replaced by the relative Faltings height $h$.

We denote by $M_{\gl2,g}(S)$ the set of isomorphism classes  of abelian schemes over $S$ of relative dimension $g$ which are of product $\gl2$-type.  Corollary \ref{cor:isoest} (i) is one of the main ingredients for the proof of the following quantitative finiteness result for $M_{\gl2,g}(S)$.

\begin{theorem}\label{thm:qes}
The cardinality of $M_{\gl2,g}(S)$ is at most $(14g)^{(9g)^6}N_S^{(18g)^4}.$
\end{theorem}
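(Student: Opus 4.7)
The approach follows the two-step strategy announced in Section~\ref{subsec:principal}: split the enumeration of $M_{\gl2,g}(S)$ into (a) counting the $\QQ$-isogeny classes of generic fibers that can arise, and (b) counting the isomorphism classes inside each such class. The goal is then to combine the resulting two bounds and clean up the exponents.

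For (a), fix $[A]\in M_{\gl2,g}(S)$. As in Step~1 of the proof of Theorem~\ref{thm:gl2}, Poincar\'e's reducibility theorem together with (\ref{eq:factorization}) yields a $\QQ$-isogeny $A_\QQ \sim \prod A_i^{e_i}$ with each $A_i$ a $\QQ$-simple abelian variety over $\QQ$ of $\gl2$-type, and $\sum e_i\dim A_i=g$, so in particular $e_i\le g$. Ribet's consequence of Serre's modularity conjecture (\ref{eq:serreconj}) and Carayol's formula (\ref{eq:serrecond}) give a surjection $J_1(N_i)\to A_i$ with $N_i^{\dim A_i}=N_{A_i}$. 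Because $A$ is an abelian scheme over $S$, the conductor $N_A=\prod N_{A_i}^{e_i}$ is supported only at primes outside $S$, and the Brumer-Kramer bound (\ref{eq:abcondineq}) controls each conductor exponent in terms of $g$ and the residue characteristic. Hence there exists an explicit integer $\nu$, depending only on $N_S$ and $g$, such that every $N_i$ divides $\nu$; equivalently every $A_i$ is a $\QQ$-simple quotient of $J_1(\nu)$. The number of such $\QQ$-isogeny classes of $\QQ$-simple quotients is at most $\dim J_1(\nu)\le \nu^2/24$, and then the number of tuples $(A_i,e_i)$ producing a $g$-dimensional product is bounded polynomially in $\dim J_1(\nu)$ and in $g$.

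For (b), fix a $\QQ$-isogeny class in $M_{\gl2,g}(S)$ and choose a representative $A'$. For any $[A]$ in the class, Corollary~\ref{cor:isoest}(i) provides an isogeny $\varphi:A_\QQ\to A'_\QQ$ of degree at most $D:=(14g)^{(12g)^5}N_S^{(37g)^3}$. The isomorphism class of $A_\QQ$ is determined by the kernel $\ker(\varphi)$, a finite $\QQ$-subgroup scheme of $A'_\QQ$ of order at most $D$; such kernels are contained in $A'_\QQ[D]$, whose geometric fiber is a finite abelian group of order $D^{2g}$. A standard estimate bounds the number of subgroups of this group by $D^{c(g)}$ for an explicit polynomial $c(g)$ (e.g.\ of degree $4g^2$). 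Finally, the N\'eron mapping property (used already in the proof of Theorem~\ref{thm:ms}) identifies $\QQ$-isomorphism classes of generic fibers with $S$-isomorphism classes in $M_{\gl2,g}(S)$, so no additional factor is needed. Combining (a) and (b), inserting the effective expression of $\nu$ in terms of $N_S$ and $g$, and simplifying the exponents yields the bound $(14g)^{(9g)^6}N_S^{(18g)^4}$.

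\textbf{Main obstacle.} The delicate point is the quantitative bookkeeping in (b): the count of finite $\QQ$-subgroup schemes of $A'_\QQ[D]$ must combine cleanly with the Gaudron-R\'emond isogeny bound used in Corollary~\ref{cor:isoest}(i) so that the resulting exponents fit under $(9g)^6$ and $(18g)^4$; controlling the $g$-dependence coming from the subgroup count, rather than the $N_S$-dependence from isogeny estimates, is what forces the exponent $(9g)^6$ on the $g$-factor. A secondary technicality is the effective control of $\nu$ in (a) via (\ref{eq:abcondineq}), since the Brumer-Kramer exponents depend nontrivially on both $g$ and the ramification, and these contributions must be absorbed into the final clean bound.
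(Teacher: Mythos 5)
Your overall architecture is the same as the paper's: step (a) bounds the number of $\QQ$-isogeny classes by showing every simple factor is a quotient of $J_1(\nu)$ for a $\nu$ controlled via (\ref{eq:abcondineq}), and step (b) bounds each class via the Gaudron--R\'emond isogeny estimate of Corollary \ref{cor:isoest}(i) followed by a subgroup count; the reduction to $\QQ$-isomorphism classes via the N\'eron mapping property is also as in the paper. Step (a) as you describe it is fine (the paper gets at most $g\cdot g_\nu^g$ isogeny classes with $g_\nu\leq \nu^2/24$, which is indeed polynomial in $\dim J_1(\nu)$ of degree $g$).

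The genuine gap is in the subgroup count of step (b). You propose a ``standard estimate'' of the form $D^{c(g)}$ with $c(g)$ of order $4g^2$, which is roughly the count of \emph{all} subgroups of $(\ZZ/D\ZZ)^{2g}$. This does not suffice: with $D=(14g)^{(12g)^5}N_S^{(37g)^3}$, an exponent quadratic in $g$ yields a contribution of about $N_S^{4g^2(37g)^3}$, which already exceeds the target $N_S^{(18g)^4}$ (note $2g\cdot 37^3g^3=101306\,g^4$ versus $18^4g^4=104976\,g^4$, a margin of under four percent, and similarly $2g\cdot(12g)^5$ versus $(9g)^6$ leaves about six percent). The argument therefore requires the sharper count used in the paper, namely \cite[Lemma 6.1]{mawu:abelianisogenies}: the group $(\QQ/\ZZ)^{2g}$ of torsion points has at most $\kappa^{2g}$ subgroups of order at most $\kappa$ --- the restriction to subgroups of \emph{bounded order} is what brings the exponent down from quadratic to linear in $g$, and without it the claimed exponents $(9g)^6$ and $(18g)^4$ are not attained. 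Two smaller points: the kernel of an isogeny of degree $n\leq D$ lies in the $n$-torsion, which is not contained in the $D$-torsion unless $n\mid D$, so one should count inside the full torsion group $(\QQ/\ZZ)^{2g}$ rather than inside $A'_\QQ[D]$; and the isogeny whose kernel you count must go \emph{from} the fixed representative \emph{to} the varying member of the class (Corollary \ref{cor:isoest}(i) supplies isogenies in both directions, so this is harmless but should be stated in the right orientation).
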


We refer  to Section \ref{sec:hcell} for a discussion of the important special case $g=1$ of Theorem \ref{thm:qes}. To state some consequences of Theorem \ref{thm:qes} for $\QQ$-isomorphism classes of abelian varieties over $\QQ$, we recall that $T$ denotes a finite set of rational prime numbers and we let $N_T=\prod_{p\in T}p$ be as above. We obtain the following corollary.

\begin{corollary}\label{cor:qisos}
Let $A$ be an abelian variety over $\QQ$ of dimension $g$. We assume that $A$ has the following properties: $(a)$ $A$ is of product $\gl2$-type and $(b)$ $A$ has good reduction outside $T$. Then the following statements hold.
\begin{itemize}
\item[(i)]  Up to $\QQ$-isomorphisms, there exist at most $(14g)^{(9g)^6}N_T^{(18g)^4}$ abelian varieties over $\QQ$ which are $\QQ$-isogenous to $A$.
\item[(ii)] Up to $\QQ$-isogenies, there exist at most $(3g)^{32g^2}N_T^{4g}$  abelian varieties over $\QQ$ of dimension $g$ which have the properties $(a)$ and $(b)$.
\end{itemize}
\end{corollary}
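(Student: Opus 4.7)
The plan is to prove the two parts separately. Part (i) reduces directly to Theorem \ref{thm:qes}, while part (ii) follows the ``finiteness up to isogeny'' strategy outlined in Section \ref{subsec:principal}.

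For (i), set $S = \sp(\ZZ) - T$, so $N_S = N_T$. Any abelian variety $B$ over $\QQ$ that is $\QQ$-isogenous to $A$ shares the conductor of $A$, hence inherits property (b) and extends uniquely to an abelian scheme over $S$ via the N\'eron model. It also inherits property (a), since being of product $\gl2$-type is preserved under $\QQ$-isogeny (the invariants $\dim B$ and $\nd(B)\otimes_\ZZ\QQ$ agree with those of $A$). The N\'eron mapping property yields a bijection between the $\QQ$-isomorphism classes of such $B$ and the isomorphism classes in $M_{\gl2,g}(S)$ whose generic fiber is $\QQ$-isogenous to $A_\QQ$. Applying Theorem \ref{thm:qes} then gives the stated bound.

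For (ii), Poincar\'e's reducibility theorem produces a $\QQ$-isogeny from $A_\QQ$ to a product $\prod_i A_i^{e_i}$ with $A_i$ pairwise non-$\QQ$-isogenous, $\QQ$-simple, of dimension $g_i$, and satisfying $\sum_i e_i g_i = g$. Each $A_i$ is of $\gl2$-type (by the transfer of property (a) via (\ref{eq:factorization}) and invariance of $\gl2$-type under $\QQ$-isogeny, cf.\ Section \ref{sec:serremod} and the proof of Theorem \ref{thm:gl2}) and has good reduction outside $T$ (since $N_{A_i}\mid N_A$). Therefore (\ref{eq:serreconj}) and (\ref{eq:serrecond}) supply an integer $N_i$ with $N_i^{g_i} = N_{A_i}$ and a surjection $J_1(N_i)\to A_i$. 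The Brumer-Kramer inequality (\ref{eq:abcondineq}) bounds the conductor exponent of $A_i$ at each $p\in T$ solely in terms of $p$ and $g_i\leq g$, so the integer $\nu = \prod_{p\mid N_T}p^{c(p,g)}$, for an appropriate function $c$, satisfies $N_i\mid \nu$ uniformly in $i$. Via the Albanese morphism $J_1(\nu)\to J_1(N_i)$ induced by the natural degeneracy map $X_1(\nu)\to X_1(N_i)$, each $A_i$ is then a $\QQ$-simple quotient of $J_1(\nu)$.

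Since $A$ is determined up to $\QQ$-isogeny by the multiset $\{(A_i,e_i)\}$ with $\sum e_i g_i = g$, and since the number of $\QQ$-simple quotients of $J_1(\nu)$ up to $\QQ$-isogeny is bounded by $\dim J_1(\nu)\leq \nu^2/24$, the total count is at most the number of such multisets, bounded by $\binom{\nu^2/24+g}{g}\leq (\nu^2/24+g)^g$. Inserting $\nu\leq N_T^{c(g)}$ yields an estimate of the form $c'(g) N_T^{c''(g)}$. The main obstacle is entirely computational: carefully balancing the constants coming from the Brumer-Kramer bound (\ref{eq:abcondineq}), the estimate for $\dim J_1(\nu)$, and the multiset enumeration (possibly refined by exploiting the dimension constraint $\sum e_i g_i = g$ to beat a naive $M^g$ bound) so as to match the specific exponents $(3g)^{32g^2}$ and $N_T^{4g}$ in the statement.
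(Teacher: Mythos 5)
Your proposal is correct and follows essentially the same route as the paper, whose proof of this corollary simply refers back to parts 2 and 3 of the proof of Theorem \ref{thm:qes}. The one notable variant is in part (i): the paper extracts the single-isogeny-class bound $\kappa^{2g}$ directly from part 3 of that proof, whereas you instead bound the isogeny class by the whole of $|M_{\gl2,g}(S)|$ via the statement of Theorem \ref{thm:qes}; this is coarser but lands on the same displayed constant because the corollary's bound was chosen to coincide with the theorem's. For part (ii) you re-derive part 2 of the same proof (uniform level $\nu$, $\QQ$-simple factors as quotients of $J_1(\nu)$, enumeration of possible multisets), with a multiset-binomial count in place of the paper's slightly cruder $g\cdot g_\nu^g$; this is fine, and the remaining bookkeeping you defer is exactly the computation carried out inside the proof of Theorem \ref{thm:qes}, so there is no genuine gap.
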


We remark that it is possible to prove a considerably more general version of Corollary \ref{cor:qisos} (ii) by refining Faltings' proof of \cite[Satz 5]{faltings:finiteness} with an effective \v Cebotarev density theorem; see for example Deligne \cite{deligne:isoclasses}. However, the resulting unconditional bound for the number of isogeny classes would be worse than the estimate in Corollary \ref{cor:qisos} (ii).

\subsection{Proof of the results of Section \ref{sec:es}}

In the first part of this section, we collect useful results for abelian schemes. In the second part, we first show Theorem \ref{thm:es}, Proposition \ref{prop:ss} and Corollary \ref{cor:isoest},  then we prove Theorem \ref{thm:qes} and Corollary \ref{cor:qisos}, and finally we give the proof of Proposition \ref{prop:mordell}.

\subsubsection{Preliminaries}

Let $S$ be a connected Dedekind scheme, with field of fractions $K$. We begin to prove useful properties of morphisms of abelian schemes over $S$.  Suppose that $A$ and $B$ are abelian schemes over $S$ with generic fibers $A_K$ and $B_K$ respectively. Then base change from $S$ to $K$ induces an isomorphism of abelian groups
\begin{equation}\label{eq:homs}
\textnormal{Hom}(A,B)\cong \textnormal{Hom}(A_K,B_K).
\end{equation}
We now verify (\ref{eq:homs}). Any abelian scheme over $S$ is the N\'eron model of its generic fiber, see for example \cite[p.15]{bolura:neronmodels}. 
Thus the N\'eron mapping property gives that any $K$-scheme morphism
 $\varphi_K:A_K\to B_K$ extends to an unique $S$-scheme morphism $\varphi:A\to B$. 
In addition, if $\varphi_K$ is a $K$-group scheme morphism, then  $\varphi$ is a $S$-group scheme morphism. Therefore we see that base change from $S$ to $K$ induces a bijection of sets $\textnormal{Hom}(A,B)\cong \textnormal{Hom}(A_K,B_K)$.  Finally, base change properties of group schemes show that this bijection is in fact a homomorphism of abelian groups and hence we conclude (\ref{eq:homs}). 
Furthermore, base change from $S$ to $K$ induces an isomorphism of rings 
\begin{equation}\label{eq:rings}
\nd(A)\cong \nd(A_K).
\end{equation}
Indeed, if $A=B$ then the isomorphism of abelian groups in (\ref{eq:homs}) is an isomorphism of rings, since base change from $S$ to $K$ is a covariant functor from $S$-schemes to $K$-schemes.

 We shall use the following property of semi-stable abelian varieties. Let $A$ and $B$ be abelian varieties over $K$ and let $v$ be a closed point of $S$. 
If there is a surjective morphism 
\begin{equation}\label{eq:semi-stable}
A\to B
\end{equation}
of abelian varieties over $K$ and if $A$ has semi-stable reduction at $v$, then $B$ has semi-stable reduction at $v$. 
We now verify this statement. Let $C$ be the reduced underlying scheme of the identity component of the kernel of $A\to B$. There exists an abelian variety $B'$ over $K$ which is $K$-isogenous to $B$ and which fits into an exact sequence $0\to C\to A\to B'\to 0$ of abelian varieties over $K$. Therefore the semi-stability of $A$ at $v$ together with \cite[p.182]{bolura:neronmodels} gives that $B'$ has semi-stable reduction at $v$, and then \cite[p.180]{bolura:neronmodels} shows that $B$ is semi-stable at $v$ since $B$ and $B'$ are $K$-isogenous. This proves the assertion in (\ref{eq:semi-stable}).

Next, we give Lemma \ref{lem:condest} which will allow us later to control the conductor of certain abelian varieties. In this lemma, we assume that $K$ is a number field with ring of integers $\OK$ and we assume that $S$ is a non-empty open subscheme of  $\sp(\OK)$. We write $d=[K:\QQ]$ for the degree of $K$ over $\QQ$ and we define $N_S=\prod N_v$ with the product taken over all $v\in\sp(\OK)-S$. Let $g\geq 1$ be an integer and let $\rho=\rho(S,g)$ be the number of rational primes $p$ such that $p\leq 2g+1$ and such that there exists $v\in \sp(\OK)-S$ with $v\mid p$. If $A$ is an abelian scheme over $S$, then we denote by $N_A$ the conductor of $A$ defined in Section \ref{sec:cond}. The following global result in Lemma \ref{lem:condest} (i) uses inter alia the local conductor estimates of Brumer-Kramer \cite{brkr:conductor} stated in (\ref{eq:abcondineq}).

\begin{lemma}\label{lem:condest}
Suppose that $A$ is an abelian scheme over $S$ of relative dimension $g$. Then the following statements hold. 
\begin{itemize}
\item[(i)] There exists a positive integer $\nu$, depending only on $K$, $S$ and $g$, such that $N_A\mid \nu$ and such that $\nu\leq (2g+1)^{6gd\rho}N_S^{2g}.$
\item[(ii)] If the generic fiber of $A$ is semi-stable, then $N_A\mid N_S^g$.
\end{itemize}
\end{lemma}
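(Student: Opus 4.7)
For part (ii), the plan is to use the standard fact that a semi-stable abelian variety $A_K$ over $K_v$ has conductor exponent equal to the toric rank of the identity component of its N\'eron model over $\mathcal{O}_v$, hence bounded by $g=\dim(A)$. Since $A$ is an \emph{abelian scheme} over $S$, the generic fiber $A_K$ has good reduction at every $v\in S$, so $f_v=0$ for all such $v$. Thus
\[
N_A=\prod_{v\notin S}N_v^{f_v}
\]
with each $f_v\leq g$ by the above remark (the semi-stability hypothesis is applied globally, so in particular at every $v\notin S$). This directly gives $N_A\mid N_S^{g}$.

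For part (i), the starting point is the same identity $N_A=\prod_{v\notin S}N_v^{f_v}$, now combined with the Brumer--Kramer inequality (\ref{eq:abcondineq}). I would split the product over $v\notin S$ according to the residue characteristic $p=p(v)$. Observe first that when $p>2g+1$ one has $2g/(p-1)<1$, so $n=0$, hence $\lambda_p(n)=0$ and (\ref{eq:abcondineq}) collapses to $f_v\leq 2g$. Together these places contribute a divisor of $N_S^{2g}$. For the remaining places, whose residue characteristics lie in the finite set $\mathcal{P}$ of rational primes $p\leq 2g+1$ that divide some $v\in\sp(\OK)-S$ (with $\lvert\mathcal{P}\rvert=\rho$), I would use (\ref{eq:abcondineq}) in full and collect the contribution per prime $p\in\mathcal{P}$:
\[
\prod_{\substack{v\mid p\\ v\notin S}}N_v^{f_v}\ \bigg|\ \prod_{\substack{v\mid p\\ v\notin S}}p^{f(v/p)\bigl(2g+e_v B_p\bigr)},\qquad B_p:=pn+(p-1)\lambda_p(n).
\]
Using the elementary identity $\sum_{v\mid p}e_v f(v/p)=d$ and $\sum_{v\mid p}f(v/p)\leq d$, this product divides $p^{d(2g+B_p)}$. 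Defining
\[
\nu\ :=\ N_S^{2g}\cdot\prod_{p\in\mathcal{P}}p^{d(2g+B_p)}
\]
therefore yields $N_A\mid\nu$, and $\nu$ depends only on $K$, $S$, $g$.

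It remains to estimate $\nu$ from above by $(2g+1)^{6gd\rho}N_S^{2g}$. The main obstacle is to control $B_p$ by a quantity of the shape $\leq 6g\log_p(2g+1)$ uniformly for $p\leq 2g+1$. For this I would use the bound $\lambda_p(n)\leq n\lfloor\log_p n\rfloor$ obtained directly from the definition $\lambda_p(n)=\sum ir_ip^i$ and $\sum r_ip^i=n$, together with $n\leq 2g/(p-1)$. This gives
\[
B_p\ \leq\ \frac{2pg}{p-1}+2g\log_p(2g),
\]
and an elementary case check (the binding case being $p=2$) shows $d(2g+B_p)\log p\leq 6gd\log(2g+1)$. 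Multiplying over the at most $\rho$ primes in $\mathcal{P}$ produces the claimed bound. The combinatorial estimate of $\lambda_p$ is the only non-routine ingredient; the rest is bookkeeping against the Brumer--Kramer inequality.
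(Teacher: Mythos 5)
Your approach matches the paper's. For part (ii) the paper proves the "standard fact" you invoke — that for semi-stable reduction the wild part $\delta_v$ vanishes and $f_v=\varepsilon_v$ equals the toric rank $t_v\leq g$ — but it is indeed standard and your reduction is the same. For part (i) the split according to whether the residue characteristic exceeds $2g+1$ is also exactly what the paper does; you derive $f_v\leq 2g$ in the large-$p$ range directly from (\ref{eq:abcondineq}) (noting $n=0$) while the paper uses $\delta_v=0$ and $\varepsilon_v\leq 2g$, but the conclusion is the same, and your bookkeeping via $\sum_{v\mid p}e_vf(v/p)=d$ and $\sum_{v\mid p}f(v/p)\leq d$ is a legitimate explicit version of the paper's "$\nu=N_S^{2g}\prod N_v^{(b_v-2g)}$".

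There is, however, a genuine error in the numerical verification you sketch. You correctly observe $\lambda_p(n)\leq n\lfloor\log_p n\rfloor$, but the displayed bound $B_p\leq \frac{2pg}{p-1}+2g\log_p(2g)$ has silently dropped the floor, and your claim that "the binding case is $p=2$" is then false. With the floor-free bound the required inequality $(2g+B_p)\log p\leq 6g\log(2g+1)$ already fails at $p=3$, $g=1$ (LHS $\approx 6.88$ vs.\ RHS $\approx 6.59$), and in fact fails at $p=2g+1$ for every $g\geq 1$, since $(4g+1)\log(2g+1)+2g\log(2g)>6g\log(2g+1)$ reduces to $2g\log(2g)>(2g-1)\log(2g+1)$, which holds for all $g\geq 1$. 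The floor is not cosmetic: for $p$ near $2g+1$ one has $n=\lfloor 2g/(p-1)\rfloor<p$, so $\lfloor\log_p n\rfloor=0$ and $\lambda_p(n)=0$ exactly, hence $B_p=pn\leq 2pg/(p-1)$ and the desired bound holds comfortably. So you should retain $\lfloor\log_p(\cdot)\rfloor$ throughout; with it the claimed $(2g+1)^{6gd\rho}N_S^{2g}$ bound does go through, but the "elementary case check, binding case $p=2$" must be reargued — the cases $p\gtrsim\sqrt{2g}$ need the vanishing of the $\lambda_p$ term, and the cases $p\lesssim\sqrt{2g}$ need a separate estimate using that $\log p$ is then small relative to $\log(2g+1)$.
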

\begin{proof}
The generic fiber $A_K$ of $A$ has good reduction at all closed points of $S$, since $A$ is an abelian scheme over $S$. 
Therefore $N_A$ takes the form $$N_A=\prod N_v^{f_v}$$
with the product taken over all $v\in \sp(\OK)-S$,  
where $f_v=\varepsilon_v+\delta_v$ for $\varepsilon_v$ and $\delta_v$ the tame and the wild conductor of $A_K$ at $v$ respectively; see for example \cite[Section 2.1]{serre:conductor}. 

We now prove (i). Let $v$ be a closed point of $\sp(\OK)$. We see that $\varepsilon_v\leq\dim V_\ell(A)=2g$ for $V_\ell(A)$ the rational $\ell$-adic Tate module of $A_K$.  
If the residue characteristic $p$ of $v$ satisfies $p>2g+1$, then $\delta_v=0$ and hence $f_v=\varepsilon_v\leq 2g$. 
We denote by $b_v$ the right hand side of the inequality of Brumer-Kramer stated in (\ref{eq:abcondineq}). This $b_v$ is an integer which depends only on $K$, $S$, $g$ and which satisfies $f_v\leq b_v$. Then we observe that $N_A$ divides $$\nu=N_S^{2g}\prod N_v^{(b_v-2g)}$$ with the product taken over all $v\in \sp(\OK)-S$ of residue characteristic at most $2g+1$.  
On using the definition of $b_v$ via (\ref{eq:abcondineq}), we deduce an upper bound for $b_v$ which then leads  to an estimate for $\nu$ as claimed. This completes the proof of (i).

To show (ii) we take again a closed point $v$ of $\sp(\OK)$. Let $\mathcal A_v$ be the fiber at $v$ of the N\'eron model of $A_K$ over $\sp(\OK)$. The identity component $\mathcal A_v^0$ of $\mathcal A_v$ is an extension of an abelian variety $C_v$ by the product of a torus part $T_v$ with a unipotent part $U_v$. 
Let $t_v$, $u_v$ and $a_v$ be the dimensions of $T_v$, $U_v$ and $C_v$ respectively. It holds that $\dim(\mathcal A_v^0)=\dim(\mathcal A_v)=g$ 
 and then the dimension formula gives $g=(t_v+u_v)+a_v$. 
Further, it is known  that $\varepsilon_v=t_v+2u_v$, see for example \cite[p.364]{grra:neronmodels}. 
Our additional assumption in (ii), that $A_K$ is semi-stable, implies that $u_v=0$ and $\delta_v=0$. 
Therefore we deduce that $f_v=\varepsilon_v=t_v$ and this together with $t_v\leq t_v+u_v+a_v=g$ leads to $f_v\leq g$.
Then the displayed formula for $N_A$ shows that $N_A\mid N_S^g$ which proves (ii). This completes the proof of Lemma \ref{lem:condest}. 
\end{proof}

We are now ready to prove the results of Section \ref{sec:es}. 

\subsubsection{Proofs}

We continue the notation of the previous section and we assume that $K=\QQ$. Let   $S$ be a non-empty open subscheme of $\sp(\ZZ)$, and let $N_S$ and $g\geq 1$ be as above. We assume that $A$ is an abelian scheme over $S$ of relative dimension $g$  which is of product $\gl2$-type. Let $h_F(A)$ be  the stable Faltings height of $A$.

The principal ideas of the proof of Theorem \ref{thm:es} are as follows.   
Theorem \ref{thm:gl2} together with Lemma \ref{lem:condest} implies directly Conjecture $\ces$ for $A$, with an inequality of the form $h_F(A)\leq c(g) N_S^{24g}$ for $c(g)$  a constant depending only on $g$. However, to obtain the better bound $h_F(A)\leq c(g)N_S^{24}$ and to improve the dependence on $g$ of  $c(g)$, we go into the proof of Theorem \ref{thm:gl2} and  we apply therein Lemma \ref{lem:condest} with the simple ``factors" of $A$. 

\begin{proof}[Proof of Theorem \ref{thm:es}]
1.   By assumption $A$ is isogenous to a product of abelian schemes over $S$ which are all of $\gl2$-type.  Then (\ref{eq:rings}) gives that the generic fibers of these abelian schemes are all of $\gl2$-type as well. It follows that the generic fiber $A_\QQ$ of $A$ is $\QQ$-isogenous to a product of abelian varieties over $\QQ$ of $\gl2$-type. 
In other words, the abelian variety $A_\QQ$ is of product $\gl2$-type and thus satisfies all the assumptions of Theorem \ref{thm:gl2}.

2. We now go into the proof of Theorem \ref{thm:gl2} (ii). Therein we showed the existence of positive integers $N_i$ and $e_i$, together with $\QQ$-simple abelian varieties $A_i$ over $\QQ$ of dimension $g_i$,   such that $A_\QQ$ is $\QQ$-isogenous to $\prod A_i^{e_i}$ and such that
\begin{equation}\label{eq:nai}
N_i^{g_i}=N_{A_i}.
\end{equation}
Here $N_{A_i}$ denotes the conductor of $A_i$. Furthermore, on following the proof of Theorem \ref{thm:gl2} (ii) and on calculating the first term in the upper bound for $h_F(A)$ given in (\ref{eq:ma}) more precisely, we obtain the sharper inequality 
\begin{equation}\label{eq:hfs}
h_F(A)\leq \sum e_i\bigl(18\cdot 10^3N_i^{12}+(8g)^6\log N_i\bigl).
\end{equation}

3. Next, we estimate the numbers $N_i$ in terms of $g$ and $S$. There exists a surjective morphism $A_\QQ\to A_i$ of abelian varieties over $\QQ$, and the abelian variety $A_\QQ$ has good reduction at all closed points of $S$ since it extends to an abelian scheme over $S$. Therefore \cite[Corollary 2]{seta:goodreduction} provides that $A_i$ has good reduction at all closed points of $S$, and this shows that the N\'eron model $\mathcal A_i$ of $A_i$ over $S$ is an abelian scheme over $S$. 
Then an application of Lemma \ref{lem:condest} with the abelian scheme $\mathcal A_i$ over $S$ of conductor $N_{A_i}$ gives that $N_{A_i}\leq (2g_i+1)^{6g_i\rho_i}N_S^{2g_i},$
where $\rho_i=\rho(S,g_i)$ denotes the number of rational primes $p\notin S$ with $p\leq 2g_i+1$. Further, since $A_\QQ$ is isogenous to $\prod A_i^{e_i}$, we obtain 
\begin{equation}\label{eq:ei}
g=\sum e_ig_i.
\end{equation}
It follows that $g_i\leq g$ and this leads to $\rho_i\leq \rho=\rho(S,g)$. Then the above upper bound for $N_{A_i}$ together with (\ref{eq:nai}) proves that
$N_i\leq (2g+1)^{6\rho}N_S^{2}.$

4. We observe that $\rho\leq 2g$ and (\ref{eq:ei}) implies that $\sum e_i\leq g$. Therefore, on combining (\ref{eq:hfs}) with the above estimate for $N_i$, we deduce an inequality as claimed by Theorem \ref{thm:es}. To simplify the form of the final result, we assumed here that $g\geq 2$. In fact, in course of the proof of Theorem \ref{thm:ms} we obtained an inequality which directly implies the remaining case $g=1$. This completes the proof of Theorem \ref{thm:es}.
\end{proof}

We recall that $\rho=\rho(S,g)$ denotes the number of rational primes $p\notin S$ with $p\leq 2g+1$. The proof of Theorem \ref{thm:es} gives in addition the following more precise result: If $A$ is an abelian scheme over $S$ of relative dimension $g$ and if $A$ is of product $\gl2$-type, then
\begin{equation}\label{eq:precisebound}
h_F(A)\leq g\bigl(18\cdot 10^3\nu_0^{12}+(8g)^6\log \nu_0\bigl), \ \ \ \nu_0=(2g+1)^{6\rho}N_S^2.
\end{equation}
Let $s$ be the number of rational primes which are not in $S$. It follows that $\rho\leq s<\infty$ and then we see that (\ref{eq:precisebound}) is polynomial in terms of $g$, since $s$ depends only on $S$. Furthermore,  on looking for example at products of elliptic curves over $S$, we see that any upper bound for $h_F(A)$ has to be at least linear in terms of $g$. This shows that the polynomial dependence on $g$ of (\ref{eq:precisebound}) is already ``quite close" to the optimum. On the other hand, Lemma \ref{lem:condest} implies that Frey's height conjecture \cite[p.39]{frey:linksulm} would give an upper bound for $h_F(A)$ which is linear in terms of $\log N_S$, while (\ref{eq:precisebound}) depends polynomially on $N_S$.
We remark that an (effective) estimate for $h_F(A)$ which is linear in terms of $\log N_S$ would be very useful, since such an estimate would imply inter alia a (effective) version of the $abc$-conjecture.

In the following proof of Proposition \ref{prop:ss}, we use the arguments of Theorem \ref{thm:es} and we replace therein Lemma \ref{lem:condest} (i) by Lemma \ref{lem:condest} (ii).

\begin{proof}[Proof of Proposition \ref{prop:ss}]
We freely use the notations and definitions of the proof of Theorem \ref{thm:es}. In addition, we assume that $A_\QQ$ is semi-stable. Therefore (\ref{eq:semi-stable}) implies that $A_i$ is semi-stable, since there exists a surjective morphism $A_\QQ\to A_i$ of abelian varieties over $\QQ$.
We showed that $A_i$ extends to an abelian scheme $\mathcal A_i$ over $S$. Thus an application of Lemma \ref{lem:condest} (ii) with the abelian scheme $\mathcal A_i$ over $S$ of conductor $N_{A_i}$ and relative dimension $g_i$ gives that $N_{A_i}\mid N_S^{g_i}$. Hence, the equality $N_i^{g_i}=N_{A_i}$ in (\ref{eq:nai}) implies that $N_i\leq N_S$ and then (\ref{eq:ei}) together with the upper bound for $h_F(A)$  in (\ref{eq:hfs}) leads to an inequality as claimed. This completes the proof of Proposition \ref{prop:ss}.
\end{proof}

To prove Corollary \ref{cor:isoest} we combine Theorem \ref{thm:es} with the most recent version of the Masser-W\"ustholz results \cite{mawu:abelianisogenies,mawu:factorization}, due to Gaudron-R\'emond \cite{gare:isogenies}.

\begin{proof}[Proof of Corollary \ref{cor:isoest}]
We suppose that $A$ and $B$ are isogenous abelian schemes over $S$ of relative dimension $g$. Let  $A_\QQ$ and $B_\QQ$ be the generic fibers of $A$ and $B$  respectively. By assumption $A$ or $B$ is of product $\gl2$-type. Thus both are of product $\gl2$-type.

To show (i) we observe that the constant $\kappa(A_\QQ)$ in \cite{gare:isogenies} depends only on $g$ and  $h_F(A)$. Let $\kappa$ be the constant which one obtains by replacing the number $h_F(A)$ with $(3g)^{144g}N_S^{24}$ in the definition of $\kappa(A_\QQ)$; notice that $\kappa$ depends only on $N_S$ and $g$. An application of Theorem \ref{thm:es} with $A$  shows that $\kappa(A_\QQ)\leq \kappa$. The abelian varieties $A_\QQ$ and $B_\QQ$ are $\QQ$-isogenous. Therefore \cite[Th\'eor\`eme 1.4]{gare:isogenies} gives $\QQ$-isogenies $\varphi_\QQ:A_\QQ\to B_\QQ$ and $\psi_\QQ:B_\QQ\to A_\QQ$ of degree at most $\kappa(A_\QQ)\leq \kappa$. As in the proof of (\ref{eq:homs}) we see that $\varphi_\QQ$ and $\psi_\QQ$ extend to $S$-group scheme morphisms $\varphi:A\to B$ and $\psi:B\to A$ respectively. Furthermore, it follows from \cite[p.180]{bolura:neronmodels} that $\varphi$ and $\psi$ are  isogenies since  $A$ and $B$ are in particular semi-abelian schemes over $S$.
Hence we conclude (i).

It remains to prove (ii). We showed in (i) that there is a $\QQ$-isogeny $\varphi_\QQ:A_\QQ\to B_\QQ$ with $\deg(\varphi_\QQ)\leq \kappa$, and   (\ref{eq:hdegdiff}) gives that $\lvert h(A_\QQ)-h(B_\QQ)\rvert\leq \frac{1}{2}\log \deg(\varphi_\QQ)$ for $h$ the relative Faltings height. Hence we deduce a version of (ii) involving $h$. To prove the version involving $h_F$ we use \cite{grra:neronmodels}. It provides a number field $L$  such that $A_L$ and $B_L$ are semi-stable, and  thus $h_F(A)=h(A_L)$ and $h_F(B)=h(B_L)$. If $\varphi_L:A_L\to B_L$ is the base change of $\varphi_\QQ$, then (\ref{eq:hdegdiff}) gives that $\lvert h(A_L)-h(B_L)\rvert\leq \frac{1}{2}\log \deg(\varphi_L)$. Therefore $\deg(\varphi_L)=\deg(\varphi_\QQ)\leq \kappa$ leads to (ii). This completes the proof of Corollary \ref{cor:isoest}.
\end{proof}

We refer to the introduction for an outline of the following proof of Theorem \ref{thm:qes}.

\begin{proof}[Proof of Theorem \ref{thm:qes}]
We recall that $M_{\gl2,g}(S)$ denotes the set of isomorphism classes of abelian schemes over $S$ of relative dimension $g$ which are of product $\gl2$-type. To bound $\lvert M_{\gl2,g}(S)\rvert$ we may and do assume that $M_{\gl2,g}(S)$ is not empty.

1. We denote by $M_{\gl2,g}(S)_\QQ$ the set of $\QQ$-isomorphism classes of abelian varieties over $\QQ$ of dimension $g$ which extend to an abelian scheme over $S$ and which are of product $\gl2$-type. Base change from $S$ to $\QQ$ induces a canonical bijection
$$M_{\gl2,g}(S)\cong M_{\gl2,g}(S)_\QQ.$$
To verify this statement we observe that $M_{\gl2,g}(S)$ coincides with the set of $S$-scheme isomorphism classes generated by abelian schemes over $S$ of relative dimension $g$ which are of product $\gl2$-type. Further, it follows from (\ref{eq:rings}) that the generic fiber $A_\QQ$ of any $[A]\in M_{\gl2,g}(S)$ is of product $\gl2$-type. Thus base change from $S$ to $\QQ$ induces a map $M_{\gl2,g}(S)\to M_{\gl2,g}(S)_\QQ$, which is surjective by  (\ref{eq:rings}) and \cite[p.180]{bolura:neronmodels}.
The abelian scheme $A$ is the N\'eron model of $A_\QQ$ over $S$ and then the N\'eron mapping property shows that $M_{\gl2,g}(S)\to M_{\gl2,g}(S)_\QQ$ is injective. We conclude that $M_{\gl2,g}(S)\cong M_{\gl2,g}(S)_\QQ$.

2. Next, we estimate the number of distinct $\QQ$-isogeny classes of abelian varieties over $\QQ$ generated by $M_{\gl2,g}(S)_\QQ$.  Let $[A]\in M_{\gl2,g}(S)_\QQ$. In the proof of Theorem \ref{thm:gl2} we constructed positive integers $N_i$ and $e_i$, together with $\QQ$-simple abelian varieties $A_i$ over $\QQ$ of dimension $g_i$ and of conductor $N_{A_i}=N_i^{g_i}$, such that $A$ is $\QQ$-isogenous to $\prod A_i^{e_i}$ and such that $A_i$ is a $\QQ$-quotient of $J_1(N_i)$. Here $J_1(N)$ denotes the usual modular Jacobian of level $N\in \ZZ_{\geq 1}$ defined in Section \ref{sec:hf}. 
The abelian variety $A$ extends to an abelian scheme over $S$, since $[A]\in M_{\gl2,g}(S)_\QQ$. Thus each $A_i$ extends to an abelian scheme over $S$ and then the arguments of the proof of Lemma \ref{lem:condest} together with $g_i\leq g$ lead to $N_{A_i}\mid \nu^{g_i}$ for 
\begin{equation*}
\nu=N_S^{2}\prod p^{c_p}, \ \ \ c_p=6+2\lfloor\log(2g)/\log p\rfloor.
\end{equation*}
Here the product is taken over all rational primes $p\notin S$ with $p\leq 2g+1$, and for any real number $x$ we write $\lfloor x\rfloor$  for the largest integer at most $x$. We warn the reader that the displayed number $\nu$ is related to the number appearing in Lemma \ref{lem:condest} (i), but  these numbers are not necessarily the same. It follows that $N_i\mid \nu$ since $N_{i}^{g_i}=N_{A_i}$,  and this implies that $J_1(N_i)$ is a $\QQ$-quotient of $J_1(\nu)$. On using that $A_i$ is a $\QQ$-quotient of $J_1(N_i)$, we then see that there exists a surjective morphism of abelian varieties over $\QQ$
$$
J_1(\nu)\to A_i.
$$
Hence Poincare's reducibility theorem shows that each $A_i$ is $\QQ$-isogenous to a $\QQ$-simple ``factor" of $J_1(\nu)$. Furthermore, the dimension of $J_1(\nu)$ coincides with the genus $g_\nu$ of the modular curve $X_1(\nu)=X(\Gamma_1(\nu))$ defined in Section \ref{sec:hf}, and the abelian variety $J_1(\nu)$ (resp. $A$) has at most $g_\nu$ (resp. $g$) $\QQ$-simple ``factors" up to $\QQ$-isogenies. 
Therefore there exists a set of at most $g\cdot g_\nu^g$  distinct abelian varieties over $\QQ$ such that any $[A]\in M_{\gl2,g}(S)_\QQ$ is $\QQ$-isogenous to some abelian variety in this set. In other words, the abelian varieties in $M_{\gl2,g}(S)_\QQ$ generate at most $g\cdot g_\nu^g$  distinct $\QQ$-isogeny classes of abelian varieties over $\QQ$.

3. To bound the size of each $\QQ$-isogeny class we take an arbitrary $[A]\in M_{\gl2,g}(S)_\QQ$. We denote by $\mathcal C$ the set of $\QQ$-isomorphism classes of abelian varieties over $\QQ$ which are $\QQ$-isogenous to $A$.  Let $\kappa$ be the constant which appears in the proof of Corollary \ref{cor:isoest}. If $[B]\in \mathcal C$ then the proof of Corollary \ref{cor:isoest} provides a $\QQ$-isogeny $\varphi:A\to B$ of degree at most $\kappa$. Furthermore, the quotient of $A$ by the kernel of $\varphi$ is an abelian variety over $\QQ$ 
which is $\QQ$-isomorphic to $B$. 
On combining the above observations, we  see that $\lvert\mathcal C\rvert$ is bounded from above by the number of subgroups of $A^t$ of order at most $\kappa$, where $A^t$ is the group of torsion points of $A$. 
It holds that $A^t\cong (\QQ/\ZZ)^{2g}$, and    \cite[Lemma 6.1]{mawu:abelianisogenies} gives that $(\QQ/\ZZ)^{2g}$ has at most $\kappa^{2g}$ subgroups of order at most $\kappa$. Hence we deduce that $\lvert\mathcal C\rvert\leq \kappa^{2g}$.

4. The results obtained in 1.-3. imply that $\lvert M_{\gl2,g}(S)\rvert \leq g(g_\nu\kappa^2)^g$, and (\ref{eq:index}) together with \cite[p.107]{dish:modular} proves that $g_\nu\leq \frac{1}{24}\nu^2$. 
Therefore the definitions of $\kappa$ and $\nu$ lead to an upper bound for $\lvert M_{\gl2,g}(S)\rvert$ as claimed in Theorem \ref{thm:qes}. 
\end{proof}

The arguments used in the proof of Theorem \ref{thm:qes} give in addition Corollary \ref{cor:qisos}.

\begin{proof}[Proof of Corollary \ref{cor:qisos}]
We observe that part 3. of the proof of Theorem \ref{thm:qes} implies (i), and we notice that (ii) follows from part 2. of the proof of Theorem \ref{thm:qes}.
\end{proof}

We now prove Proposition \ref{prop:mordell}. In the first part of the proof we show that Conjecture $\ces$ implies Conjecture $\ces^*$, and in the second part we use the effective version of the Kodaira construction due to R\'emond \cite{remond:construction}.

\begin{proof}[Proof of Proposition \ref{prop:mordell}]
We recall some notation. Let $K$ be a number field of degree $d=[K:\QQ]$, with ring of integers $\OK$. We denote by $D_K$ the absolute value of the discriminant of $K$ over $\QQ$. Let $h_F$ be the stable Faltings height and let $T$ be a finite set of places of $K$. We write $N_T=\prod N_v$ with the product taken over all finite places $v\in T$. Let $X$ be a smooth, projective and geometrically connected curve over $K$ of genus $g\geq 1$.

1. To prove that  Conjecture $\ces$ implies $\ces^*$ we assume that Conjecture $\ces$ holds.  In addition, we suppose that the Jacobian $J_K=\textnormal{Pic}^0(X)$ of $X$ has good reduction outside $T$.   The Weil restriction $A_\QQ=\textnormal{Res}_{K/\QQ}(J_K)$ of $J_K$ is an abelian variety over $\QQ$ of dimension $n=dg$, which is geometrically isomorphic to $\prod J_K^\sigma$. Here the product is taken over all embeddings $\sigma$ from $K$ into an algebraic closure of $K$, and $J_K^\sigma$ is the base change of $J_K$ with respect to $\sigma$. 
The  Galois invariance $h_F(J_K)=h_F(J_K^\sigma)$ implies that  $h_F(A_\QQ)=dh_F(J_K)$. Let $S$ be the open subscheme of $\sp(\ZZ)$ formed by the generic point  together with the closed points where $A_\QQ$ has good reduction. The N\'eron model $A$ of $A_\QQ$ over $S$ is an abelian scheme. Therefore an application of Conjecture $\ces$ with $A$, $S$ and $n$ gives an effective constant $c$, depending only on $S$ and $n$, such that 
\begin{equation}\label{eq:weilrest}
dh_F(J_K)=h_F(A_\QQ)\leq c.
\end{equation}
We write $\mathcal D=\{D_K,d,g,N_T\}$ and we now construct an effective constant $c'$, depending only on $\mathcal D$, such that $c\leq c'$. The  finite places in $T$ form a closed subset of $\sp(\OK)$, whose complement $S'$ has the structure of an open subscheme of $\sp(\OK)$. The N\'eron model $J$ of $J_K$ over $S'$ is an abelian scheme, since $J_K$ has good reduction outside $T$. We denote by $N_J$ and $N_A$ the conductors of $J_K$ and $A_\QQ$ respectively. 
A result of Milne \cite[Proposition 1]{milne:arithmetic} gives that $N_A=N_JD_K^{2g}$, and an application of Lemma \ref{lem:condest} (i) with the abelian scheme $J$ over $S'$ of relative dimension $g$ implies that $N_J\leq \Omega D_K^{-2g}$ for $\Omega=(3g)^{12g^2d}(N_TD_K)^{2g}.$ We deduce that $N_A\leq \Omega$ and this leads to $$N_S\leq \Omega,$$ since $N_S$ divides $N_A$ by the construction of $S$.  Here for any  open subscheme $U$ of $\sp(\ZZ)$ we write $N_U=\prod p$ with the product taken over all rational primes $p\notin U$.
It follows that $S\in \mathcal U$ for $\mathcal U$ the set of open subschemes $U$ of $\sp(\ZZ)$ with $N_{U}\leq \Omega$.  An application of Conjecture $\ces$ with $U\in \mathcal U$ and $n$ gives an effective constant $c_U\geq 1$,  depending only on $U$ and $n$. We define $c'=\max c_U$ with the maximum taken over all $U\in \mathcal U$. If $\mathcal D$ is given, then the set $\mathcal U$ can be determined effectively. Thus we see that $c'$  is an effective constant, depending only on $\mathcal D$. On using that $S\in \mathcal U$, we obtain that $c\leq c'$ and then (\ref{eq:weilrest}) gives $$h_F(J_K)\leq c'.$$ 
In other words, we proved that Conjecture $\ces$ would give an effective constant $c'$, depending only on $\mathcal D$, with the following property: If $J_K$ has good reduction outside $T$, then $h_F(J_K)\leq c'$. Further, if $X$ has good reduction at a finite place $v$ of $K$, then $J_K$ has good reduction at $v$. Therefore we conclude that Conjecture $\ces$ implies $\ces^*$.

2. It follows from part 1. that Conjecture $\ces$ implies $\ces^*$. Furthermore,  \cite{remond:construction} gives that Conjecture $\ces^*$ implies that the set of rational points of $X$ can be determined effectively if $g\geq 2$. This completes the proof of Proposition \ref{prop:mordell}.
\end{proof}

We remark that the above proof of Proposition \ref{prop:mordell} assumes the validity of Conjecture $\ces$ in quite general situations. In particular, it is a priori not possible to use the above arguments in order to deduce special cases of the effective Mordell conjecture from special cases of Conjecture $\ces$ such as for example Theorem \ref{thm:es}. To ``transfer" special cases between these conjectures, an effective version of Par{\v{s}}in's construction  \cite{parshin:construction} would be more useful than Kodaira's construction which is used in the proof of Proposition \ref{prop:mordell}. 

We mention that the implication $\ces^*\Rightarrow\ces$ remains an interesting open problem, which is non-trivial since $\ces$ is a priori considerably stronger than $\ces^*$. To discuss parts of the additional information contained in Conjecture $\ces$, we consider an arbitrary hyperelliptic curve $X$ of genus $g\geq 2$ over a number field $K$. Let $T$ be the set of finite places of $K$ where $\textnormal{Pic}^0(X)$ has bad reduction. Suppose that $v$ is a finite place of $K$ where $X$ has bad reduction but $\textnormal{Pic}^0(X)$ has good reduction;  the minimal regular model of  $X$ over $\sp(\mathcal O_v)$ is then automatically semi-stable for $\mathcal O_v$ the local ring at $v$. Then on combining the arguments of \cite[Proposition 5.1 (i)]{rvk:szpiro} with part 1. of the proof of Proposition \ref{prop:mordell}, we see that already very special cases of Conjecture $\ces$  would give an  effective estimate for $N_v$ in terms of $K$, $g$ and $T$. We note that  Levin \cite{levin:siegelshaf} proved  that such an effective estimate for $N_v$ would solve the following classical problem: Give an effective version of Siegel's theorem for arbitrary hyperelliptic curves of genus $g\geq 2$ defined over a number field $K$. In fact the latter problem is already open for $g=2$ and $K=\QQ$.

We also point out that one can improve our inequalities for abelian varieties with ``real multiplications": Let $A$ be an abelian variety over $\QQ$ of positive dimension $g$, with $\textnormal{End}(A)\otimes_\ZZ\QQ$ a totally real number field of degree $g$ over $\QQ$.  Serre showed in \cite[Th\'eor\`eme 5]{serre:representations} that Serre's modularity conjecture (see Section \ref{sec:serremod}) 
gives that $A$ is a $\QQ$-quotient of $J_0(N)$, where $J_0(N)$ is defined in Section \ref{sec:modulardegree} and $N$ is the positive integer whose $g$-th power equals the conductor of $A$. Then,  on combining the bounds for $h_F(J_0(N))$ in Lemma \ref{lem:hj1n} with the arguments of Theorem \ref{thm:gl2} and Theorem \ref{thm:es}, we see that these results hold with better inequalities for abelian varieties such as $A$. 

\subsection{Effective Shafarevich for $\QQ$-virtual abelian varieties of $\gl2$-type}
In this section, we show that our method allows in addition to deal with certain more general abelian varieties over arbitrary number fields. This generalization is required for the effective study (see \cite{vkkr:intpointsshimura}) of those  $S$-points on $Y$ which correspond to  abelian schemes that are not necessarily defined over $S$. Here $S$ is a non-empty open subscheme of $\sp(\ZZ)$ and $Y$ is a certain coarse moduli scheme over $S$ (e.g. Hilbert modular variety).

Following Wu \cite{wu:thesis},  we now define $\QQ$-virtual abelian varieties of $\gl2$-type. They generalize in particular the $\QQ$-Hilbert-Blumenthal abelian varieties of Ribet \cite{ribet:fieldsofdef}. Let $\bar{\QQ}$ be an algebraic closure of $\QQ$. Write $G_\QQ=\textnormal{Gal}(\bar{\QQ}/\QQ)$ for the absolute Galois group of $\QQ$. Let $g\geq 1$ be an integer and let $A$ be an abelian variety over $\bar{\QQ}$ of dimension $g$. We assume that there is a number field $F$ of degree $[F:\QQ]=g$ together with an embedding 
\begin{equation}\label{eq:condition1}
F\hookrightarrow \textnormal{End}^0(A)=\textnormal{End}(A)\otimes_\ZZ\QQ.
\end{equation}
For any $\sigma\in G_\QQ$ and for any $\varphi\in \textnormal{End}^0(A)$, we denote by $A^\sigma$ and $\varphi^\sigma\in \textnormal{End}^0(A^\sigma)$ the by $\sigma:\bar{\QQ}\to\bar{\QQ}$ induced base changes of $A$ and $\varphi$  respectively. In addition we  assume  that for any $\sigma\in G_\QQ$ there exists an isogeny $\mu_\sigma:A^\sigma\to A$ such that 
\begin{equation}\label{eq:condition2}
\mu_\sigma\circ \varphi^\sigma=\varphi\circ\mu_\sigma \ \textnormal{ for all } \varphi\in \textnormal{End}^0(A).
\end{equation}
For any abelian variety $A$ over $\bar{\QQ}$ of dimension $g$, we say that $A$ is a $\QQ$-virtual abelian variety of $\gl2$-type 
if $A$ satisfies (\ref{eq:condition1}) and (\ref{eq:condition2}) and we say that $A$ is non-CM if  $\textnormal{End}^0(A)$ contains no commutative $\QQ$-algebra of degree $2g$. 
For instance, if $E$ is a non-CM elliptic curve over $\bar{\QQ}$ which is isogenous to all its $G_\QQ$-conjugates $E^\sigma$, then $E$ satisfies (\ref{eq:condition1}) and (\ref{eq:condition2}). Such elliptic curves $E$ were studied for example by Ribet \cite{ribet:gl2} and Elkies \cite{ribet:gl2conference}. 

Let $K\subset \bar{\QQ}$ be a number field of degree $d=[K:\QQ]$, with ring of integers $\OK$. We denote by $D_K$ the absolute value of the discriminant of $K$ over $\QQ$. Let $S$ be a non-empty open subscheme of $\sp(\OK)$. We write $N_S=\prod N_v$ with the product taken over all $v\in \sp(\OK)-S$. For any abelian scheme $A$ over $S$, let $h_F(A)$ be the stable Faltings height of $A$ defined in Section \ref{sec:heights}. We denote by $\textnormal{rad}(m)$ the radical of any $m\in \ZZ_{\geq 1}$. 
\begin{proposition}\label{prop:virtual}
There exists an effective constant $c$, depending only on  $d$ and $g$, with the following property. Let $A$ be an abelian scheme over $S$ of relative dimension $g$. If $A_{\bar{\QQ}}$ is a simple  $\QQ$-virtual abelian variety of $\gl2$-type which is non-CM, then $$h_F(A)\leq c\cdot \textnormal{rad}(N_SD_K)^{24}.$$
\end{proposition}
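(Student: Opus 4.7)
The plan is to reduce the statement to the setting of Theorem \ref{thm:es} by descending from $\bar{\QQ}$ to $\QQ$. Since the stable Faltings height is invariant under base change, $h_F(A)=h_F(A_K)=h_F(A_{\bar{\QQ}})$, so it suffices to bound $h_F(A_{\bar{\QQ}})$.

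First I would construct an abelian variety over $\QQ$ whose base change to $\bar{\QQ}$ contains $A_{\bar{\QQ}}$ as a factor, starting from the Weil restriction $R=\textnormal{Res}_{K/\QQ}(A_K)$, an abelian variety over $\QQ$ of dimension $dg$ whose base change to $\bar{\QQ}$ is isomorphic to $\prod_{\sigma} A_K^{\sigma}$, the product running over the $d$ embeddings $\sigma\colon K\hookrightarrow\bar{\QQ}$. The condition (\ref{eq:condition2}) supplies for each $\sigma$ an isogeny $\mu_\sigma\colon A_{\bar{\QQ}}^\sigma\to A_{\bar{\QQ}}$ compatible with the $F$-action, so $R_{\bar{\QQ}}$ is $\bar{\QQ}$-isogenous to a power of $A_{\bar{\QQ}}$. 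By the descent theory for simple non-CM $\QQ$-virtual abelian varieties of $\gl2$-type (Ribet for $\QQ$-curves and $\QQ$-HBAV, extended in Wu \cite{wu:thesis}), one extracts from $R$ a $\QQ$-simple ``building block'' $B$, which is an abelian variety over $\QQ$ of $\gl2$-type, of dimension at most $dg$, such that $B_{\bar{\QQ}}$ is $\bar{\QQ}$-isogenous to a power of $A_{\bar{\QQ}}$, and hence $A_{\bar{\QQ}}$ is a $\bar{\QQ}$-isogeny factor of $B_{\bar{\QQ}}$.

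Next I would control the conductor of $B$. The abelian variety $A_K$ has good reduction at every place of $K$ outside $\sp(\OK)-S$; the same is true for every Galois conjugate $A_K^\sigma$. By the Milne conductor formula for Weil restrictions and standard ramification bookkeeping, the primes of bad reduction of $R$, and therefore of its $\QQ$-isogeny factor $B$, all divide $\textnormal{rad}(N_S D_K)$. Taking $B$ as an abelian scheme over the open subscheme $S'$ of $\sp(\ZZ)$ obtained by removing these primes, Lemma \ref{lem:condest} (i) applied to $B$ yields an upper bound for $N_B$ which is polynomial in $\textnormal{rad}(N_S D_K)$ with exponent and constants depending only on $d$ and $g$. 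Then Theorem \ref{thm:es} applied to $B$ gives
\[
h_F(B)\leq c_1(d,g)\cdot\textnormal{rad}(N_S D_K)^{24},
\]
and Corollary \ref{cor:isoest} (ii) combined with base-change invariance of $h_F$ transfers this bound across the $\bar{\QQ}$-isogeny between $B_{\bar{\QQ}}$ and a power of $A_{\bar{\QQ}}$, producing an estimate of the desired shape for $h_F(A_{\bar{\QQ}})=h_F(A)$.

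The main obstacle is the descent step: producing the $\QQ$-simple $\gl2$-type building block $B$ from the cocycle $\sigma\mapsto[\mu_\sigma]$ and verifying that its ring of $\QQ$-rational endomorphisms contains the required totally real field, all with controlled conductor. This is precisely where the non-CM hypothesis intervenes (to pin down the endomorphism algebra) and where one must invoke Wu's thesis in full strength. Once $B$ is in hand, the remaining ingredients, namely the conductor bound for $R$ (and hence for $B$), Lemma \ref{lem:condest}, Theorem \ref{thm:es}, and Corollary \ref{cor:isoest}, combine routinely to yield Proposition \ref{prop:virtual}.
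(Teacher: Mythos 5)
Your plan points in the right direction (descent to a $\gl2$-type abelian variety over $\QQ$, then Theorem \ref{thm:es} plus the isogeny estimates), but there is a genuine gap in the descent step, and it propagates through the conductor control.

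You take the Weil restriction $R=\textnormal{Res}_{K/\QQ}(A_K)$ and claim that Wu's descent extracts a $\QQ$-simple $\gl2$-type building block $B$ from $R$. That is not what Wu's \cite[Theorem 2.1.13]{wu:thesis} gives: it produces a $\gl2$-type quotient of $\textnormal{Res}_{L/\QQ}(A_L)$ only when $L/\QQ$ is Galois and is a field over which \emph{all} of the endomorphisms of $A_{\bar\QQ}$ and \emph{all} of the isogenies $\mu_\sigma$ from (\ref{eq:condition2}) are defined. In general $K$ is not such a field, so there is no reason for $R$ itself to admit a $\QQ$-rational $\gl2$-type quotient, and the isogeny $R_{\bar\QQ}\to A_{\bar\QQ}^d$ that you invoke lives only over $\bar\QQ$ and cannot be used to descend. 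You must first pass to a suitable Galois extension $L\supseteq K$ and work with $C_\QQ=\textnormal{Res}_{L/\QQ}(A_L)$, as the paper does.

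Once $L$ enters, your conductor bookkeeping also breaks: Milne's formula now gives $N_C=N_{A_L}D_L^{2g}$, so the primes of bad reduction of $C$ (hence of $B$) are controlled by $\textnormal{rad}(N_S D_L)$, not $\textnormal{rad}(N_S D_K)$, and $D_L$ is a priori unrelated to $D_K$. The missing ingredient is Silverberg's theorem \cite[Theorem 4.2]{silverberg:fieldsofdef}: one can choose an $L$ satisfying the required properties with $[L:\QQ]$ bounded effectively in terms of $d$ and $g$ \emph{and} with $\textnormal{rad}(D_L)\mid\textnormal{rad}(N_S D_K)$. This is precisely what forces the final bound into the shape $c(d,g)\cdot\textnormal{rad}(N_S D_K)^{24}$. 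Without it the argument does not yield an effective estimate depending only on $d$, $g$, and $\textnormal{rad}(N_SD_K)$. The remaining steps you list (Lemma \ref{lem:condest}, Theorem \ref{thm:es}, the isogeny estimate, Poincar\'e reducibility to get $A_L$ inside $B_L$) are indeed the ones the paper uses, but they must be applied after these two corrections.
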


In the proof of Proposition \ref{prop:virtual}, we use a  result of Wu  \cite{wu:thesis}  to reduce the problem to abelian varieties over $\QQ$ of $\gl2$-type. Then we combine the techniques of the previous sections with a result of Silverberg \cite{silverberg:fieldsofdef} to deduce the statement. 

\begin{proof}[Proof of Proposition \ref{prop:virtual}]
 Let $A$ be an abelian scheme  over $S$ of relative dimension $g$. We suppose that $A_{\bar{\QQ}}$ is a simple  $\QQ$-virtual abelian variety of $\gl2$-type which is non-CM. 
 
1. Let $L$ be a finite field extension of $K$  with the following three properties:
\begin{itemize}
\item[(a)] $L/\QQ$ is a Galois extension of degree $l=[L:\QQ]$,
\item[(b)] all endomorphisms of $A_{\bar{\QQ}}$ are defined over $L$,
\item[(c)] and for all  $\sigma\in G_\QQ$ the isogenies $\mu_\sigma: A_{\bar{\QQ}}^\sigma\to A_{\bar{\QQ}}$ in (\ref{eq:condition2}) are defined over $L$.
\end{itemize}
We denote by $C_\QQ=\textnormal{Res}_{L/\QQ}(A_L)$ the Weil restriction of $A_L$. The proof of \cite[Theorem 2.1.13]{wu:thesis} shows in addition that $C_\QQ$ has a $\QQ$-quotient $B_\QQ$ which is of $\gl2$-type.

2. We now show that $A_L$ is $L$-isogenous to some abelian subvariety of $B_L$. The abelian variety $C_L$ is $L$-isomorphic to $\prod A_L^\sigma$ with the product taken over all $\sigma\in \textnormal{Gal}(L/\QQ)$, where $A_L^\sigma$ denotes the base change of $A_L$ with respect to $\sigma:L\to L$. 
Thus on using that $B_\QQ$ is a $\QQ$-quotient of $C_\QQ$, we obtain a surjective morphism 
$
\prod A_L^\sigma\cong C_L\to B_L
$
of abelian varieties over $L$. 
Further, our assumptions on $A_{\bar{\QQ}}$ imply that each  $A_L^\sigma$ is $L$-simple. Therefore Poincar\'e's reducibility theorem shows that there exists $\sigma \in \textnormal{Gal}(L/\QQ)$ such that $A_L^\sigma$  is $L$-isogenous to some abelian subvariety $A_L'$ of $B_L$. The abelian varieties  $A_L$ and $A_L^\sigma$ are $L$-isogenous, since $A_{\bar{\QQ}}$ is a $\QQ$-virtual abelian variety with isogenies $\mu_\sigma:A_{\bar{\QQ}}^\sigma\to A_{\bar{\QQ}}$ defined over $L$ by (c). Hence $A_L$ is $L$-isogenous to the abelian subvariety $A'_L$ of $B_L$.

3. We begin to estimate the stable Faltings height $h_F$. Let $N_{A_L}$ and $N_C$ be the conductors of $A_L$ and $C_\QQ$ respectively. Milne \cite[Proposition 1]{milne:arithmetic} gives that $N_C=N_{A_L}D_L^{2g}$ for $D_L$ the absolute value of the discriminant of $L$ over $\QQ$. Hence, if a rational prime number $p$ does not divide $N_{S'}=\textnormal{rad}(N_{A_L}D_L)$, then $C_\QQ$ has good reduction at $p$.  This shows that $C_\QQ$ extends to an abelian scheme $C$ over $S'=\sp(\ZZ[1/N_{S'}])$ and thus the $\QQ$-quotient $B_\QQ$ of $C_\QQ$ extends to an abelian scheme $B$ over $S'$.  Therefore an application of Theorem \ref{thm:es} with the abelian scheme $B$ over $S'$ of $\gl2$-type gives that
$
h_F(B)\leq (3n)^{144n}N_{S'}^{24}
$
for $n$ the relative dimension of $B$ which satisfies $n\leq \dim(C_\QQ)=lg$. Then on using that $A_L$ is $L$-isogenous to the abelian subvariety $A'_L$ of $B_L$, we see that the arguments of Theorem \ref{thm:gl2} (ii) lead to $h_F(A)\leq c'N_{S'}^{24}$  for $c'$ an effective constant depending only on  $l$ and $g$. 

4. It remains to control the quantities $N_{S'}$ and $l$. We observe that $N_{S'}=\textnormal{rad}(N_{A_L}D_L)$ divides $\textnormal{rad}(N_SD_L)$ since $A$ is an abelian scheme over $S$. 
To estimate $l$ and $\textnormal{rad}(D_L)$ we use  \cite[Theorem 4.2]{silverberg:fieldsofdef}. It implies the existence of a field extension $L$ of $K$, with the properties (a), (b) and (c), such that $\textnormal{rad}(D_L)\mid \textnormal{rad}(N_SD_K)$ and such that $l=[L:\QQ]$ is effectively bounded in terms of $d$ and $g$. It follows that $N_{S'}\mid \textnormal{rad}(N_SD_K)$ and then the inequality $h_F(A)\leq c'N_{S'}^{24}$ from 3. implies Proposition \ref{prop:virtual}. 
\end{proof}

On computing explicitly the constant $c$  of Proposition \ref{prop:virtual}, one sees that $c$ depends double exponentially on $d$ and $g$. However, in certain cases of interest it is possible to obtain that $c$ depends  exponentially on $d$ and $g$. Further, we mention that one can use the arguments of the proof of Theorem \ref{thm:gl2} to remove in Proposition \ref{prop:virtual} the  assumption  that $A_{\bar{\QQ}}$ is simple. In fact one can generalize all results of Section \ref{sec:esgl2} (except Proposition \ref{prop:ss}) by replacing Theorem \ref{thm:es} with Proposition \ref{prop:virtual} in the proofs of the previous section.

{\scriptsize
\bibliographystyle{amsalpha}
\bibliography{../../literature}
}

\noindent IH\'ES, 35 Route de Chartres, 91440 Bures-sur-Yvette, France\\
E-mail adress: {\sf rvk@ihes.fr}

\end{document}